\documentclass[11pt,reqno]{amsproc}

\title{Proof of the transverse instability of Stokes waves}

\author{Ryan P. Creedon}
\address{Department of Applied Mathematics, University of Washington, Seattle, WA 98195}
\email{creedon@uw.edu}

\author{Huy Q. Nguyen}
\address{Department of Mathematics, University of Maryland, College Park, MD 20742}
\email[H. Nguyen]{hnguye90@umd.edu}

\author{Walter A. Strauss}
\address{Department of Mathematics, Brown University, Providence, RI 02912}
\email{wstrauss@math.brown.edu}

\usepackage[margin=1in]{geometry}
\usepackage{amsmath, amsthm, amssymb, mathrsfs}
\usepackage{times}
\usepackage{color}
\usepackage{hyperref}
\usepackage{multirow}
\usepackage{graphicx}
\usepackage{tikz}
\usetikzlibrary{decorations.pathmorphing}

\parskip=15pt
\parindent=15pt
\newcommand{\bq}{\begin{equation}}
\newcommand{\eq}{\end{equation}}
\newcommand{\bqa}{\begin{eqnarray*}}
\newcommand{\eqa}{\end{eqnarray*}}


\theoremstyle{plain}
\newtheorem{theo}{Theorem}[section]
\newtheorem{prop}[theo]{Proposition}
\newtheorem{lemm}[theo]{Lemma}
\newtheorem{coro}[theo]{Corollary}

\newtheorem{defi}[theo]{Definition}
\theoremstyle{definition}
\newtheorem{rema}[theo]{Remark} 
\newtheorem{nota}[theo]{Notation}
\DeclareMathOperator{\cnx}{div}

\DeclareMathOperator{\I}{I}

\DeclareSymbolFont{pletters}{OT1}{cmr}{m}{sl}
\DeclareMathSymbol{s}{\mathalpha}{pletters}{`s}


\def\tt{\theta}
\def\eps{\varepsilon}
\def\na{\nabla}

\def\wh{\widehat}
\def\g{\gamma}
\def\lb{\lambda}
\def\mez{\frac{1}{2}}
\def\tdm{\frac{3}{2}}


\def\Rr{\mathbb{R}}

\def\Zz{\mathbb{Z}}

\def\cG{\mathcal{G}}
\def\cJ{\mathcal{J}}
\def\cK{\mathcal{K}}

\def\cL{\mathcal{L}}
\def\L1{\mathcal{L}^{(1)}}
\def\L2{\mathcal{L}^{(2)}}
\def\L3{\mathcal{L}^{(3)}}
\def\cU{\mathcal{U}}
\def\cO{O}
\def\cH{\mathcal{H}}
\def\cV{\mathcal{V}}
\def\ld{\lambda}

\def\p{\partial}

\def\na{\nabla}

\def\ka{\kappa}

\def\ol{\overline}
\def\T{\mathbb{T}}
\def\Tt{\Theta}
\def\wt{\widetilde}

\def\ka{\kappa}
\def\Om{\Omega}
\def\ld{\lambda}

\numberwithin{equation}{section}

\pagestyle{plain}

\begin{document}
\begin{abstract}
A Stokes wave is a traveling free-surface periodic water wave  that is constant in the direction transverse to the direction of propagation. 
In 1981 McLean discovered via numerical methods that Stokes waves at infinite depth are unstable with respect to transverse perturbations of the initial data. Even for a Stokes wave that has very small amplitude $\eps$, we prove rigorously that transverse perturbations, 
after linearization, will lead to exponential growth in time.  To observe this instability, extensive calculations are required all the way up to order $O(\eps^3)$.  
All previous rigorous results of this type were merely two-dimensional, in the sense that they only treated long-wave perturbations in the longitudinal direction. This is the first rigorous proof of three-dimensional instabilities of Stokes waves. 
\end{abstract}

\keywords{Stokes waves, gravity  waves, transverse instabilities, unstable eigenvalues, infinite depth}

\noindent\thanks{\em{ MSC Classification: 76B07, 35Q35,  35R35, 35C07, 35B35.}}

\maketitle

\section{Introduction} 
We consider classical  water waves that are irrotational, incompressible,  and inviscid.  
The water lies below an unknown free surface $S$. 
 Such waves have  been studied for over two centuries, notably by Stokes \cite{Stokes}.  
A {\it Stokes wave} is a two-dimensional steady wave traveling in a fixed horizontal direction at a fixed speed $c$. It has been known for a century that 
a curve of small-amplitude Stokes waves exists \cite{Nekrasov, Civita, Struik}.  
Several decades ago it was proven that the curve extends to large amplitudes as well \cite{Keady}.

In 1967 Benjamin and Feir \cite{BenjFeir} discovered, to the general surprise of the fluids community, that a {\it small} 
long-wave  perturbation of a {\it small} Stokes wave  in the same direction of propagation will lead to exponential instability.  
This is known as the {\it modulational} (or Benjamin-Feir or sideband) {\it instability}, a phenomenon 
 whereby deviations from a periodic wave are reinforced by the nonlinearity, leading to the eventual breakup of the wave into a train of pulses.   Rigorous proofs of the modulational instability  were discovered by Bridges and Mielke \cite{BM} in the case of finite depth, provided  the depth is larger than a critical depth $d_0$, and by  two of the current authors \cite{NguyenStrauss} for infinite depth.  
A more detailed description of the instability, including the figure-8 pattern of the 
unstable eigenvalues, was found numerically in \cite{DecOli} and asymptotically by another of the current authors \cite{CreDec}.  
This detailed description was proven rigorously by Berti \emph{et al}, first in the deep water case \cite{Berti1} 
and then in the finite depth case \cite{Berti2} when the depth is larger than $d_0$.  Recently the much more subtle 
critical depth case was treated in \cite{Berti3}. 

 A different type of instability due to perturbations in the same direction of propagation (\emph{i.e.}, the longitudinal direction) 
was  detected in the numerical work of McLean \cite{McLeanDeep,McLeanFinite}. They are called  {\it high-frequency instabilities} because they develop away from the origin of the complex plane, appearing as small isolas (bubbles)
centered on the imaginary axis.  In contrast to modulational instabilities, high-frequency instabilities occur at all values of the depth. The first plot of the high-frequency instabilities was due to Deconinck and Oliveras \cite{DecOli}, thirty years after McLean's work. Among the challenges in plotting these instabilities was to find the longitudinal wave numbers of the perturbation that correspond to each high-frequency isola, which exist in narrow intervals that drift as the amplitude of the Stokes wave increases. In \cite{CreDecTri}, a perturbation method was developed to obtain an asymptotic expansion of these intervals in addition to an asymptotic expansion of the maximum growth rates of the high-frequency instabilities.  This  revealed for the first time analytically that such instabilities can grow faster than the modulational instability at certain finite depths. These high-frequency results have since been made rigorous in the recent work \cite{HurYang}.

Both modulational and high-frequency instabilities  are created by longitudinal perturbations 
that have {\it different periods} compared to that of the Stokes waves.  
On the other hand, what was unanswered was 
whether a small Stokes wave could be unstable when perturbed in both horizontal directions but keeping the longitudinal period unperturbed. This {\it transverse instability} problem was studied numerically first by Bryant \cite{Bryant} and was 
followed by much more detailed work of McLean {\it et al} \cite {MMMSY, McLeanDeep, McLeanFinite}. 
While these remarkable papers did detect transverse instabilities, a mathematical proof has been missing ever since.  This problem is truly three-dimensional.
{\it The purpose of the current paper is to provide the first rigorous proof of transverse instabilities of small Stokes waves. }

Before discussing this paper, it is important to note that there are many other models of water waves for which the transverse instability has been studied rigorously. 
One such model includes the presence of surface tension, that is, gravity-capillary waves.  
{\it However, it should be kept in mind that the presence of surface tension 
drastically changes the mathematical problem. } 
The transverse instability for solitary (non-periodic) waves in such a model was rigorously 
discussed by a number of authors, including Bridges \cite{Br}, 
Pego and Sun \cite{PegoSun1} and  Rousset and Tzvetkov \cite{RouTzv}.  
The transverse instability for periodic waves in this model was recently studied 
by Haragus, Truong and Wahlen \cite{HTW}.

With these results in mind, we now specify the parameters of our problem. Let $x$ and $y$ denote the horizontal variables 
and $z$ the vertical one.  
For simplicity, we assume here that the depth is infinite.  
We are confident that our proof generalizes to the finite-depth case. Consider the curve of Stokes waves traveling in the $x$-direction and with a given period, say $2\pi$ without loss of generality.  This curve is parametrized by a small parameter $\eps$ which represents the wave  amplitude of the Stokes waves.  
Such a steady wave can be described in the moving $(x,z)$ plane 
(where $x-ct$ is replaced by $x$) 
by its free surface 
$S=\{(x,y,z)\ |\ z=\eta^*(x;\eps)\}$ and by its velocity potential $\psi^*(x;\eps)$ restricted to $S$.

The perturbation of $\eta^*$ takes the form $\overline{\eta}(x)e^{\lambda t + i\alpha y}$, where $\overline{\eta}$ has the {\it same period} $2\pi$ as the Stokes wave, $\lambda \in \mathbb{C}$ is the growth rate of the perturbation, and $\alpha\in \Rr$ is the transverse  wave number  of the perturbation. The problem is to find at least one value of $\alpha$  that leads to instability, that is, $\text{Re}\lambda > 0$ . 
After linearizing the nonlinear water wave system about a Stokes wave, introducing a ``good-unknown,'' and performing a conformal mapping  change of variables, we find that the exponents $\lambda$ are eigenvalues of a linear operator $\mathcal{L}_{\varepsilon,\beta}$, where $\beta = \alpha^2$. Motivated by \cite{MMMSY}, we  first determine a {\it resonant transverse wave number} $\alpha_*$ so that the unperturbed operator $\cL_{0, \beta_*}$ with $\beta_* = \alpha_*^2$ has an imaginary double eigenvalue $\lambda_0 = i\sigma$. 
 This eigenvalue corresponds to the lowest-possible resonance that generates a Type II transverse instability according to McLean \cite{MMMSY}, of which there are infinitely many higher-order resonances that have potential to generate higher-order transverse instabilities. We expect however that higher-order transverse instabilities have smaller growth rates for small Stokes waves. In order to capture the transverse instabilities we introduce a new small parameter $\delta$ for the perturbation of $\beta$ about $\beta_*$. Our main result is that the perturbed operator $\cL_{\eps, \beta_*+\delta}$ has eigenvalues $\ld_\pm$ 
with non-zero real parts that bifurcate from $\lambda_0$, stated more precisely in the following theorem.  

\begin{theo} \label{theo:main}
There exist $\varepsilon_{\textrm{max}} > 0$ and $\delta_{\text{max}}>0$  such that  
for all $ \varepsilon \in (- \varepsilon_{\textrm{max}},  \varepsilon_{\textrm{max}})$ and $\delta\in(-\delta_{\text{max}},\delta_{\text{max}})$, 
the operator $\mathcal{L}_{\varepsilon,\beta_*+\delta}$ has a pair of eigenvalues
\begin{align}
\lambda_{\pm} &=i\Big(\sigma + \frac{1}{2}T(\varepsilon,\delta) \Big) \pm \frac12 \sqrt{\Delta(\varepsilon,\delta)}, \label{lambda_exact1}
\end{align}
where $T$ and $\Delta$ are real-valued, real-analytic functions such that $T(\varepsilon,\delta) = O\left(\delta \right)$ and $\Delta(\varepsilon,\delta) = O\left(\delta^2\right)$ as $(\varepsilon,\delta) \rightarrow (0,0)$. Furthermore, there exist $\ka_0 \in \mathbb{R}$ and $\ka_1>0$ such that for 
\begin{align}
\delta = \delta(\varepsilon,\theta) = \kappa_0\varepsilon^2 + \theta\varepsilon^3 \quad \textrm{with} \quad |\theta|<\kappa_1,
\end{align}
we have $\Delta(\varepsilon,\delta(\varepsilon,\theta))>0$ for sufficiently small $\varepsilon$.
Thus, the eigenvalue $\lambda_+$  has positive real part provided $\delta = \delta(\varepsilon,\theta)$ with $|\theta|<\kappa_1$ and $\varepsilon$ is sufficiently small. Moreover, $\text{Re}\lambda_+ = O\left(\varepsilon^3\right)$ as $\varepsilon \rightarrow 0$ for each $\theta$.
This means that there exist transverse perturbations of the given Stokes wave whose amplitudes 
grow temporally like $e^{t\,\text{Re}\ld_+ }$.
\end{theo}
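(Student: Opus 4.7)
The plan is to reduce the spectral problem for $\cL_{\eps,\beta_*+\delta}$ near $i\sigma$ to a $2\times 2$ matrix eigenvalue problem via the Riesz spectral projection, and then to expand the reduced matrix to the order needed to resolve the instability bubble. Since the double eigenvalue $i\sigma$ is isolated in the spectrum of $\cL_{0,\beta_*}$, I fix a small positively oriented contour $\Gamma\subset\Cc$ enclosing only $i\sigma$ and set
\begin{equation*}
\Pi(\eps,\delta) \;=\; -\frac{1}{2\pi i}\oint_{\Gamma}(\cL_{\eps,\beta_*+\delta}-\zeta)^{-1}\,d\zeta,
\end{equation*}
which for $(\eps,\delta)$ near $(0,0)$ is an analytic rank-two projection onto an invariant subspace of $\cL_{\eps,\beta_*+\delta}$. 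I choose a basis $\{v_1,v_2\}$ of $\Ran\Pi(0,0)$ adapted to the reversibility $x\mapsto -x$ and to the Hamiltonian structure of the water-wave system, transport it analytically by $\Pi(\eps,\delta)$, and read off the $2\times 2$ analytic matrix $M(\eps,\delta)$ representing $\cL_{\eps,\beta_*+\delta}$ on $\Ran\Pi(\eps,\delta)$. Its eigenvalues are exactly $\lambda_\pm$, and setting $N := M - i\sigma I_2$, the quadratic formula identifies $T = -i\,\tr N$ and $\Delta = (\tr N)^2 - 4\det N$. The discrete symmetries force $\tr N$ to be purely imaginary and $\det N$ to be real, so $T$ and $\Delta$ are real-analytic and real-valued, which is the qualitative part of the theorem.

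The quantitative part requires a Taylor expansion of $N$ in $(\eps,\delta)$. The Stokes expansions of $\eta^*$ and $\psi^*$ in powers of $\eps$, combined with the conformal-mapping and good-unknown reductions, yield
\begin{equation*}
\cL_{\eps,\beta_*+\delta} \;=\; \cL_{0,\beta_*} + \delta\,\p_\beta\cL_{0,\beta_*} + \eps\,\cL^{(1)} + \eps^2\,\cL^{(2)} + \eps^3\,\cL^{(3)} + O\bigl((|\eps|+|\delta|)^4\bigr),
\end{equation*}
with explicit operator coefficients. Substituting this into the Neumann series for the resolvent in the integrand of $\Pi$ produces the Taylor coefficients of $M$ as pairings of products of the $\cL^{(j)}$ and $\p_\beta\cL_{0,\beta_*}$ against a dual basis $\{v_1^*,v_2^*\}$, mediated by the reduced resolvent $(\cL_{0,\beta_*}-i\sigma)^{-1}$ on the spectral complement $\Ker\Pi(0,0)$. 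At $\eps=0$ the operator has constant coefficients in $x$ and is diagonalized by Fourier series; a short computation then gives $\Delta(0,\delta) = A\delta^2 + O(\delta^3)$ with $A<0$, consistent with the persistence of purely imaginary spectrum when $\eps=0$.

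The heart of the analysis is then an explicit computation, at orders $\eps^2\delta$ and $\eps^4$, showing that the leading part $A\delta^2 + B\eps^2\delta + C\eps^4$ of $\Delta$ is a perfect square in $(\delta,\eps^2)$, i.e.\ $B^2 = 4AC$, so that $\Delta$ vanishes along a curve $\delta = \kappa_0\eps^2 + O(\eps^3)$ with $\kappa_0 = -B/(2A)$ to order strictly higher than $\eps^4$. Pushing the expansion one further order along this curve isolates the leading nontrivial coefficient and yields
\begin{equation*}
\Delta\bigl(\eps,\kappa_0\eps^2+\theta\eps^3\bigr) \;=\; c_*\,\eps^6(\kappa_1^2-\theta^2) + O(\eps^7), \qquad c_*>0,
\end{equation*}
for an explicit $\kappa_1>0$. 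Hence $\Delta>0$ precisely when $|\theta|<\kappa_1$ and $\eps$ is sufficiently small, and $\RE\lambda_+ = \tfrac12\sqrt{\Delta} = O(\eps^3)$, completing the proof.

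The main obstacle is the length and delicacy of the $\eps^3$-order computation, together with the need to obtain the identity $B^2 = 4AC$ as a \emph{structural} consequence of the Hamiltonian and reversibility structure rather than a numerical accident; carrying out the conformal and good-unknown changes of variables so that these symmetries, and the basis $\{v_1,v_2\}$ on which the reduction rests, remain manifest through $O(\eps^3)$ constitutes the bulk of the technical work.
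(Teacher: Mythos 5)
Your outline follows the same route as the paper: a Riesz/Kato spectral projection onto the two-dimensional invariant subspace continuing the double eigenvalue $i\sigma$, the reversibility and Hamiltonian structure forcing the reduced matrix to be $i\sigma I+iN$ with $N$ real and $N_{12}=-N_{21}$ (hence $T$ and $\Delta$ real), third-order expansions of the reduced matrix, and the dominant balance $\delta=\kappa_0\varepsilon^2+\theta\varepsilon^3$. However, two steps that you present as consequences of structure are in fact the substantive content. First, the identity you call $B^2=4AC$ (the fourth-order part of $\Delta$ being a negative perfect square) does \emph{not} follow from the Hamiltonian/reversibility symmetries; those only give $\Delta=-(A-C)^2+4B^2$ with $A,B,C$ real. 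The perfect-square structure at fourth order holds because the off-diagonal entry $B$ has no terms of order $\varepsilon,\varepsilon^2,\delta,\varepsilon\delta,\dots$, and this is a Fourier wave-number (momentum) bookkeeping fact: the two resonant modes carry wave numbers $1$ and $-2$, while the order-$\varepsilon^{j}$ coefficients of the linearized operator shift wave numbers by at most $j$, so the first coupling between the two modes occurs only at order $\varepsilon^{3}$ (the paper's Lemma 6.1). If you try to extract this from reversibility alone, the argument does not close; you need the support-of-wave-numbers analysis of the Kato basis corrections and of the expanded Dirichlet--Neumann operator.

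Second, and more seriously, your final expansion asserts $c_*>0$ and ``an explicit $\kappa_1>0$'', but $\kappa_1>0$ is exactly the statement that the $\varepsilon^{3}$ coefficient of the off-diagonal entry ($b_{3,0}$ in the paper) is nonzero, and this is the crux of the whole theorem: if that coefficient vanished, the same expansion would give $\Delta\le 0$ at order $\varepsilon^{6}$ along your curve and no instability would follow at this order. There is no structural reason for $b_{3,0}\neq 0$; the paper must compute it through the full third-order expansions (including the genuinely pseudo-differential corrections $R_1,R_2,R_3$ of $\mathcal{G}_{\varepsilon,\beta}$ and the contour-integral formulas for the projection derivatives) and then prove non-vanishing by an algebraic argument: rewriting $b_{3,0}$ as an explicit function of $\gamma_1=(1+\beta_*)^{1/4}$, using the resonance relation $2\gamma_1^3-9\gamma_1^2+18\gamma_1-13=0$, and verifying via the argument principle that the associated polynomial has no root at $\gamma_1$. (The companion non-degeneracy $a_{0,1}\neq c_{0,1}$, which your claim $A<0$ at $\varepsilon=0$ covers, is by contrast an easy computation.) A second, smaller point your sketch glosses over: expanding $A$ and $C$ only to third order leaves $O(|\varepsilon|^5+|\delta|^5)$ errors in $(A-C)^2$ that could a priori compete with $4b_{3,0}^2\varepsilon^{6}$; the paper handles this by retaining the $\varepsilon^{4}$ diagonal terms and observing that along $\delta=\kappa_0\varepsilon^2+\theta\varepsilon^3$ they multiply the vanishing linear factor and so enter only at $O(\varepsilon^{7})$. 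Without these ingredients your proposal establishes the reduction and the correct scaling, but not the positivity of $\Delta$, i.e.\ not the instability itself.
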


Substituting $\delta = \delta(\varepsilon,\theta)$ into \eqref{lambda_exact1} and dropping terms of $O\left(\varepsilon^4\right)$ and smaller, we obtain an asymptotic expansion of the unstable eigenvalues. By eliminating $\theta$ from this expansion in favor of its real and imaginary parts, denoted $\lambda_r$ and $\lambda_i$, respectively, we find that the eigenvalues lie approximately on the ellipse
\begin{equation}
\frac{4.085\lambda_r^2}{\varepsilon^6} + \frac{86.059\left(\lambda_i+0.389-0.467\varepsilon^2\right)^2}{\varepsilon^6} = 1, \label{ellipsenum}
\end{equation}
where we have numerically evaluated coefficients for ease of readability. The center of this ellipse drifts from the double eigenvalue $i\sigma  \approx -0.389i$  along the imaginary axis like $O\left(\varepsilon^2\right)$, while its semi-major and semi-minor axes scale like $O\left(\varepsilon^3\right)$.  We refer to Corollary \ref{cor:ellipse} for the precise statement and the left panel of Figure \ref{fig1} for a schematic. 

We compare \eqref{ellipsenum} with numerical computations of the unstable eigenvalues obtained from the Floquet-Fourier-Hill method applied to the Ablowitz-Fokas-Musslimani formulation of the transverse spectral problem, see \cite{DecOli15} for details. The right panel of Figure \ref{fig1} shows the results of these numerical computations on a Stokes wave with amplitude parameter $\varepsilon = 0.01$. Also plotted is the corresponding asymptotic ellipse \eqref{ellipsenum}. 
The difference between the asymptotic and numerical results is $O\left(\varepsilon^4\right)$, demonstrating agreement between the theoretical results and the numerical computations to $O\left(\varepsilon^3\right)$, as desired. Even better agreement can be found by retaining higher-order corrections of the unstable eigenvalues in a manner similar to \cite{CreDecTri}.

The isola of unstable eigenvalues found above is reminiscent of the high-frequency isolas that appear in the longitudinal stability spectrum. It is therefore natural to compare the growth rates of the transverse instability obtained in this work to the known growth rates of the longitudinal instabilities of Stokes waves, including the high-frequency and Benjamin-Feir instabilities. In the infinite depth longitudinal case, the largest high-frequency isola has semi-major and semi-minor axes that scale like $O\left(\varepsilon^4\right)$ \cite{CreDecTri}.  Thus our transverse instability grows at a {\it faster} rate $O(\eps^3)$ for sufficiently small 
amplitude waves.  
On the other hand, our instability grows {\it slower} than the Benjamin-Feir instability rate, which is
$O\left(\varepsilon^2\right)$ in both finite and infinite depth 
\cite{BM, NguyenStrauss, Berti1,Berti2, HurYang, CreDec}. Moreover, our instability grows {\it slower} than the largest high-frequency instability in finite depth, which grows like $\cO(\varepsilon^2)$ \cite{CreDecTri, HurYang}.

\begin{figure}[tb]
    \centering
    \includegraphics[width=10cm]{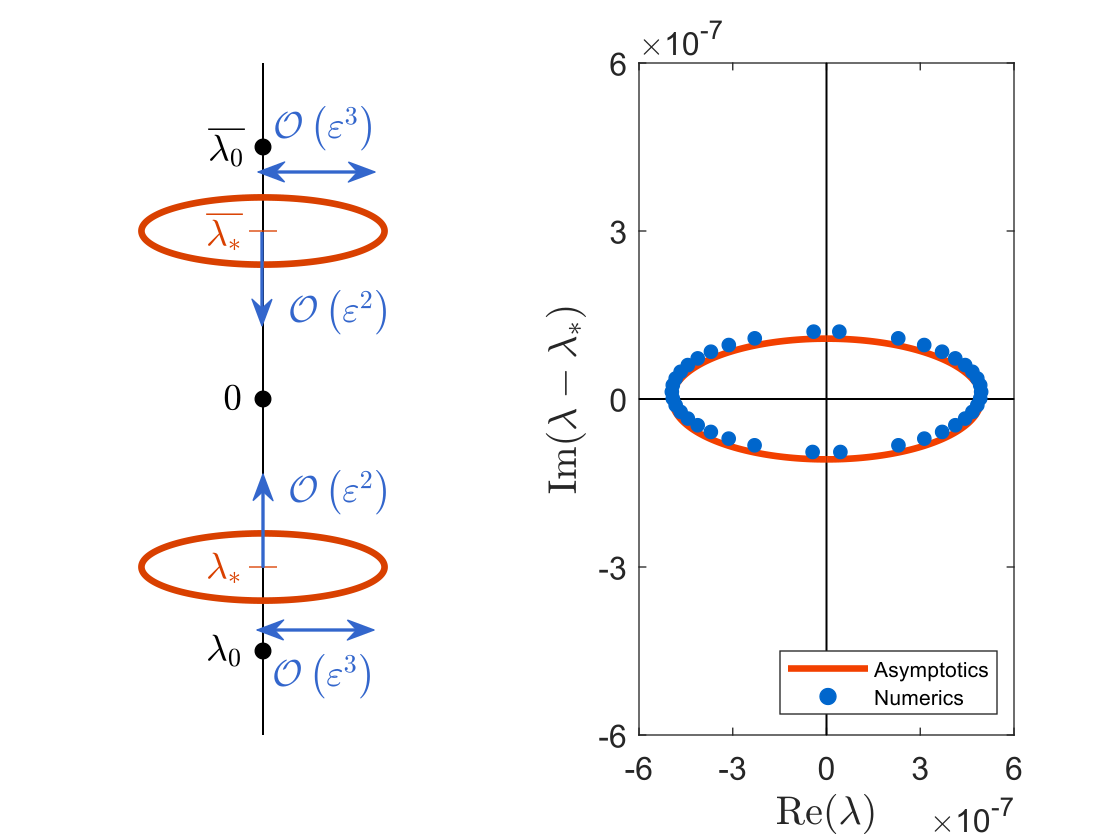}
    \caption{(Left) A schematic of the transverse instability isolas (orange curves) of width $O\left(\varepsilon^3\right)$ drifting from $\lambda_0$ like $O\left(\varepsilon^2\right)$. Here, $\lambda_*$ represents the center of the isola.  (Right) A comparison of the asymptotic approximation of the transverse instability isola \eqref{ellipsenum} (orange curve) and numerical computations of the unstable eigenvalues (blue dots) when $\varepsilon = 0.01$. The center of the isola is subtracted from the imaginary part to show a sense of scale. The difference between the numerical and asymptotic results is $O\left(\varepsilon^4\right)$. }
    \label{fig1}
\end{figure}

With the preceding discussion in mind, we now turn to the  main ideas in the proof of Theorem \ref{theo:main}.  
It  begins by finding an  expression for 
$\cL_{\eps,\beta}$ by a method analogous to that in \cite{NguyenStrauss}. However, for the present three-dimensional instability problem, $\cL_{\eps,\beta}$ involves a genuine pseudo-differential operator  as opposed to the Fourier multiplier  in the two-dimensional problem considered in \cite{NguyenStrauss, Berti1}. The proof continues by following the method of \cite{Berti1} that uses a Kato similarity transformation 
to reduce the relevant spectral data of $\cL_{\eps,\beta}$ to a $2 \times 2$ matrix $\textrm{L}_{\eps, \delta}$ with the property that $i\textrm{L}_{\eps, \delta}$ is real and skew-adjoint.  Then we show that the entries of this matrix are real analytic functions of $\varepsilon$ 
and $\delta$ and we obtain convenient functional expressions for its eigenvalues, resulting in \eqref{lambda_exact1}. In order to conclude that $\Delta(\varepsilon,\beta)>0$ for $\delta = \delta(\varepsilon,\theta)$ and sufficiently small $\varepsilon$, 
we must expand the entries of the matrix in a power series up to third order in the pair 
$(\varepsilon,\delta)$. If the expansions are terminated before third order, 
one finds $\Delta(\varepsilon,\delta) \leq 0$ for any choice of $\delta$, which is insufficient for eigenvalues with positive real part.  With the third-order expansions, however, we are able to show that $\Delta(\varepsilon,\delta) > 0$ if $\delta=O(\eps^2)$ is chosen appropriately.  

The problem solved in this paper turned out to be considerably more difficult than we had anticipated.  
Originally we began by attempting to take the transverse perturbation at a fixed period, that is, $\delta=0$.  
For the reasons stated above, that approach did not yield an instability.  
We also found that certain residue calculations related to the eigenfunction expansions of the unstable eigenvalues led to more non-zero terms than we had initially thought. This, coupled with the introduction of the small parameter $\delta$, led to extremely arduous calculations, 
so we took advantage of Mathematica to carry out the longest ones. They can be found in the companion Mathematica file \emph{CompanionToTransverseInstabilities.nb}.

In Section 2 we introduce the Stokes waves and proceed with the  linearization and then the flattening by means of a conformal mapping.  The main result here is Theorem \ref{theo:flattenG}, which is devoted to the three-dimensional Dirichlet-Neumann operator  under a two-dimensional conformal mapping, and the proof of analyticity in $\eps$ and $\delta$.  Theorem \ref{theo:main} is then reduced to studying the eigenvalues of the linearized operator $\cL_{\eps,\beta}$, which has a Hamiltonian form and is reversible.  
Section 3 is devoted to a discussion of the first resonance on the imaginary axis when $\eps=0$, the introduction of Kato's perturbed basis, 
and the reduction to the study of the eigenvalues of a $2\times 2$ matrix $\textrm{L}_{\eps,\delta}$.  
In Section 4 we perform lengthy expansions of $\cL_{\eps,\beta_*+\delta}$ out to third order in both $\eps$ and $\delta$; this is a major new  difficulty compared to the two-dimensional instabilities studied in \cite{NguyenStrauss, Berti1}. 
In Section 5 we use the expansions of $\cL_{\eps,\beta_*+\delta}$ to compute the expansions of the Kato basis vectors and of the matrix $\textrm{L}_{\eps,\delta}$. 
Finally in Section 6 we analyze the leading terms in the characteristic discriminant of $\textrm{L}_{\eps,\delta}$.  An important step is to prove that a key coefficient, which we call $b_{3,0}$, does not vanish.  This implies the existence of instabilities.  The full derivation of the ellipse \eqref{ellipsenum} is given in Corollary \ref{cor:ellipse}.
\begin{nota}
We fix the following notation throughout this paper: 
\begin{itemize}
\item $\T=\Rr/(2\pi \mathbb{Z})$.
\item  For $f,g \in \left(\textrm{L}^2(\T)\right)^2$, $(f, g)=\int_{\T} f(x) \cdot ~\overline{g}(x)dx$, the `bar' denoting complex conjugation  and the `dot' denoting the real dot product.
\item  If $\eps$ is a small parameter, we write $a=O(\eps^m)$ if for some $C>0$,  $|a|\le C|\eps^m|$ for all sufficiently small $|\eps|$. 
\end{itemize}
\end{nota}
\section{Transverse perturbations of Stokes waves}
\subsection{Stokes waves}
We consider the fluid domain 
\bq
D(t)=\{(x, y,z)\in \Rr^3\ :\ ~z<\eta(x,y, t)\}.
\eq
 below the  free surface $S=\{(x ,y, \eta(x,y, t)): (x,y)\in \Rr^3 \}$ to have infinite depth.  Assuming that the fluid is incompressible,  inviscid  and irrotational, the velocity field admits a harmonic 
potential $\phi(x, y, t): D(t)\to \Rr$. Then $\phi$ and $\eta$ satisfy the water wave system
\bq\label{ww:0}
\begin{cases}
 \Delta_{x, y,z} \phi =0\quad \text{ in } D(t), \\
\p_t\phi + \tfrac12 |\na_{x, y,z}\phi|^2 =- g\eta +P\quad \text{ on } S, \\
 \p_t\eta+\p_x\phi\p_x\eta=\p_y\phi\quad  \text{ on } S, \\
 \na_{x, y,z}\phi \to 0 \text{ as } z\to -\infty,
\end{cases}
\eq
where $P$ is the Bernoulli constant and $g>0$ is the constant acceleration due to gravity. The second equation is Bernoulli's,  which follows from the 
pressure being constant along the free surface; the third equation expresses the kinematic boundary condition that particles on the surface remain there; 
the last condition asserts that the water is quiescent at great depths.  For convenience we will take $P=0$. 

 In order to reduce the system to the free surface $S$, we introduce the Dirichlet-Neumann operator $G(\eta)$ 
 associated to $\Omega$, namely, 
\bq\label{def:Gh}
(G(\eta)f)(x,y) = \p_z\vartheta(x,y, \eta(x,y))-\na_{x,y}\vartheta(x,y, \eta(x,y)) \cdot \na_{x,y}\eta(x,y),
\eq
where $\vartheta(x, y,z)$ solves the elliptic problem 
\bq\label{elliptic:G}
\begin{cases}
\Delta_{x, y,z}\vartheta=0\quad\text{in}~D,\\
\vartheta\vert_{z=\eta(x,y)}=f(x,y),\quad \na_{x, y,z} \vartheta\in L^2(D).
\end{cases}
\eq
 We define $\psi$ as the trace of the velocity potential on the free surface, $\psi(t, x,y)=\phi(t, x,y, \eta(t, x,y))$. Then, in the moving frame with speed $c\in\Rr^2$, the gravity water wave system 
written in the Zakharov-Craig-Sulem formulation \cite{Zak, CraSul} is 
 \bq\label{ww:c}
 \begin{cases}
 \p_t \eta=c \cdot \na_{x,y}\eta+ G(\eta)\psi,\\
 \p_t\psi=c \cdot \na_{x,y}\psi-\mez|\na_{x,y}\psi|^2 
 + \mez\frac{\big( G(\eta)\psi+\na_{x,y}\psi\cdot\na_{x,y}\eta\big)^2}{1+|\na_{x,y}\eta|^2}-g\eta,
 \end{cases}
 \eq
  
 By a {\it Stokes wave} we mean a periodic steady solution of \eqref{ww:c}, i.e., 
 \bq\label{sys:Stokes}
 \begin{cases}
F_1(\eta, \psi, c):=c\cdot \na_{x, y}\eta+ G(\eta)\psi=0,\\
 F_2(\eta, \psi, c):=c\cdot\na_{x, y}\psi-\mez|\na_{x, y}\psi|^2+\mez\frac{\big( G(\eta)\psi+\na_{x, y}\psi\cdot\na_{x, y}\eta\big)^2}{1+|\na_{x, y}\eta|^2}-g\eta=0,
 \end{cases}
 \eq
 where $\eta,~\psi:\Rr^2\to \Rr$.  
 
 Consider now a Stokes wave traveling in the $x$-direction.  
 It is given by a pair of periodic functions $(\eta(x), \psi(x))$ independent of $y$ and a speed $c=(c_1, 0)$ solving \eqref{sys:Stokes}. Without loss of generality, we  henceforth consider $2\pi$-periodic Stokes waves.  As has been known for over a century, there exists a curve of small Stokes waves 
 parametrized analytically by the small amplitude $\eps$.  
 \begin{theo}[\protect{Theorem 1.3, \cite{Berti-DN}}]\label{theo:Stokes}
 For any $s>5/2$, there exists $\eps_{St}(s)>0$ and a unique family of solutions 
 \[
 \big (\eta(x; \eps), \psi(x; \eps), c(\eps)\big)\in H^s(\T)\times H^s(\T)\times (\Rr\times \{0\})
 \]
to the problem \eqref{sys:Stokes}, parametrized by $|\eps|<\eps_{St}(s)$, such that 
 \begin{itemize}
 \item(i) the mapping $(-\eps_{St}(s), \eps_{St}(s))\ni \eps\to  \big (\eta(\cdot; \eps), \psi(\cdot; \eps), c_1(\eps)\big)\in H^s(\T)\times H^s(\T)\times \Rr$ is analytic;
 \item(ii) $\eta(\cdot, \eps)$ is even and has average zero, and $\psi(\cdot, \eps)$ is odd. 
 \end{itemize}
 \end{theo}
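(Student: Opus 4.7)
My plan is to construct the family of Stokes waves by a local bifurcation argument from the trivial (flat) solution, along the classical lines of Levi-Civita and Struik. Since $(\eta, \psi) = (0,0)$ solves $F := (F_1, F_2) = 0$ for every $c_1 \in \Rr$, the goal is to identify a critical wave speed from which a curve of nontrivial solutions bifurcates. I would first restrict to the symmetry class in which $\eta \in H^s(\T)$ is even with zero mean and $\psi \in H^s(\T)$ is odd -- a class preserved by $F$ -- which both enforces conclusion $(ii)$ of the theorem automatically and reduces the kernel of the linearization to one dimension at the bifurcation point.

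The linearization at the trivial solution uses the fact that $G(0) = |D_x|$ (the Fourier multiplier $|k|$ in the flat infinite-depth case), so the linearized system becomes
\begin{align*}
c_1 \p_x \eta + |D_x|\psi &= 0, \\
c_1 \p_x \psi - g \eta &= 0.
\end{align*}
Projecting onto the Fourier mode $k \ne 0$ yields the dispersion relation $c_1^2 = g/|k|$. The fundamental resonance corresponds to $k=1$ with critical speed $c_* = \sqrt{g}$, and at this value the kernel within the chosen symmetry class is spanned by the single vector $(\cos x, \sqrt{g}\sin x)$. I would then verify the Crandall-Rabinowitz transversality condition by differentiating the simple eigenvalue branch with respect to $c_1$, and confirm the Fredholm property of the linearized operator, which follows since $|D_x|$ and $\p_x$ have matching order and together form an elliptic system on the zero-mean subspace. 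With these ingredients the analytic Crandall-Rabinowitz theorem -- or equivalently an analytic Lyapunov-Schmidt reduction -- produces a unique local branch of nontrivial solutions $(\eta(\cdot;\eps), \psi(\cdot;\eps), c_1(\eps))$, analytic in $\eps$, with leading-order profile $\eps(\cos x, \sqrt{g}\sin x)$ and $c_1(0) = \sqrt{g}$. The evenness, zero-mean, and oddness properties are preserved along the branch by invariance of the ambient symmetry class.

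The main obstacle, and the heart of the work in \cite{Berti-DN}, is to establish that the map $\eta \mapsto G(\eta)$ is real analytic as an operator from a neighborhood of $0$ in $H^s(\T)$ into $\mathcal{B}(H^s(\T), H^{s-1}(\T))$ for $s > 5/2$. Since $G(\eta)$ is a quasilinear pseudodifferential operator of order one, this requires a careful paradifferential or shape-derivative analysis and a proof that the formal Taylor expansion of $G$ around $\eta = 0$ converges in the appropriate operator norm. Once analyticity of $F$ in $(\eta, \psi, c_1)$ is secured, the conclusion follows from a standard application of the analytic implicit function theorem after the Lyapunov-Schmidt splitting; the uniqueness statement then reflects the simplicity of the kernel together with the imposed symmetry normalization.
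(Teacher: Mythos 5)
This theorem is not proved in the paper at all: it is quoted verbatim from Theorem 1.3 of \cite{Berti-DN}, whose argument is precisely the route you sketch, namely analyticity of $\eta\mapsto G(\eta)$ combined with an analytic Lyapunov--Schmidt/Crandall--Rabinowitz bifurcation from the flat state at $c_1=\sqrt{g}$ within the even/odd, zero-mean symmetry class. Your outline is correct and matches that approach, with the genuinely hard ingredient (analyticity of the Dirichlet--Neumann operator) correctly identified and deferred to the same reference the paper cites.
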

The following fourth-order expansion, basically already known by Stokes, will be needed in our proof.  
We derive it in Appendix \ref{appendix:Stokes}.  
 \begin{prop}[\protect{Theorem 1.3, \cite{Berti-DN}}]
 The unique family of Stokes waves, given by Theorem \ref{theo:Stokes}, has the expansions 
\bq\label{expand:star}
\begin{aligned}
\eta(x;\eps)&= \eps \cos x+\mez \eps^2\cos(2x)+\eps^3\Big\{\frac{1}{8}\cos x+\frac{3}{8}\cos (3x)\Big\}\\
&\quad+\eps^4\left\{\frac56\cos(2x)+\frac13\cos(4x)\right\}+O(\eps^5),\\
\psi(x,\eps)&= \eps \sqrt{g}\sin x+\frac{\sqrt{g}}{2}\eps^2\sin (2 x)+\frac{\sqrt{g}}{4} \eps^3\big\{3\sin x\cos(2x)+\sin x\big\}\\
&\quad+\eps^4\sqrt{g}\left\{\frac{5}{12}\sin(2x)+\frac{1}{3}\sin(4x)\right\}+O(\eps^5),\\
c_1(\eps)&=\sqrt{g}+\frac{\sqrt{g}}{2} \eps^2+O(\eps^4).
\end{aligned}
\eq
\end{prop}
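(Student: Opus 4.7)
The plan is to exploit the analyticity of the Stokes curve in $\eps$ (Theorem~\ref{theo:Stokes}) to justify the formal power series
\[
(\eta, \psi)(x; \eps) = \sum_{n \geq 1} \eps^n \bigl(\eta_n(x), \psi_n(x)\bigr), \qquad c_1(\eps) = \sum_{n \geq 0} \eps^n c_{1,n},
\]
and then determine the coefficients order by order by substitution into the steady system~\eqref{sys:Stokes}. The parity assertions in part (ii) of Theorem~\ref{theo:Stokes} force each $\eta_n$ to be even with mean zero (hence a cosine series) and each $\psi_n$ to be odd (hence a sine series). The reflection symmetry $(x, \eps) \mapsto (x+\pi, -\eps)$ preserves both the system and this parity class, so by uniqueness $c_1(-\eps) = c_1(\eps)$, and therefore $c_{1,n} = 0$ for all odd $n$; this accounts for the absence of $\eps$ and $\eps^3$ terms in the expansion of $c_1(\eps)$.

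The main analytical input is the Taylor expansion of the two-dimensional infinite-depth Dirichlet-Neumann operator about the flat surface,
\[
G(\eta) = |D_x| + G_1(\eta) + G_2(\eta) + G_3(\eta) + \cdots,
\]
where $G_k(\eta)$ is $k$-homogeneous in $\eta$ and obeys the classical Craig-Sulem-Sulem recurrence; for instance $G_1(\eta)\psi = -\p_x(\eta\, \p_x \psi) - |D_x|\bigl(\eta\, |D_x|\psi\bigr)$. Substituting the ansatz into $F_1$ and $F_2$, expanding also the rational nonlinearity $1/(1+(\p_x\eta)^2)$ as a geometric series, and equating coefficients of $\eps^n$, I obtain for each wave number $k \in \Zz$ the $2 \times 2$ linear system
\[
\begin{pmatrix} c_{1,0}\, ik & |k| \\ -g & c_{1,0}\, ik \end{pmatrix} \begin{pmatrix} \wh \eta_n(k) \\ \wh \psi_n(k) \end{pmatrix} = \begin{pmatrix} R_{1,n}(k) \\ R_{2,n}(k) \end{pmatrix},
\]
whose right-hand side is a polynomial in the previously-computed $(\wh \eta_m, \wh \psi_m)_{m<n}$ and $(c_{1,0}, \dots, c_{1,n-1})$. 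Its determinant $|k|(g - c_{1,0}^2 |k|)$ vanishes exactly at $k \in \{0, \pm 1\}$ once $c_{1,0}^2 = g$ is imposed at order $\eps$.

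Proceeding inductively: at order $\eps^1$, one obtains $c_{1,0} = \sqrt{g}$, $\eta_1 = \cos x$, and $\psi_1 = \sqrt{g}\sin x$. At order $\eps^2$, the modes $k = \pm 2$ give $\eta_2 = \mez\cos(2x)$ and $\psi_2 = \tfrac{\sqrt{g}}{2}\sin(2x)$, while the kernel mode $k = \pm 1$ enforces the (already known) constraint $c_{1,1} = 0$. At order $\eps^3$, the solvability condition at $k = \pm 1$ (Fredholm alternative for the singular matrix above) is the \emph{only} source from which the Stokes correction $c_{1,2} = \tfrac{\sqrt{g}}{2}$ to the wave speed can emerge; the coefficients $\eta_3, \psi_3$ are then read off from the modes $k = \pm 1, \pm 3$. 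At order $\eps^4$, the analogous procedure yields $\eta_4, \psi_4$ together with $c_{1,3} = 0$. Assembling the resulting Fourier modes produces~\eqref{expand:star}.

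The main obstacle is not conceptual but organizational. At the third and, especially, the fourth order, the nonlinear contributions from $G_1(\eta_m)\psi_\ell$, $G_2$ and $G_3$ acting on lower-order factors, the quadratic $(\p_x\psi)^2$, the quadratic numerator $(G(\eta)\psi + \p_x\psi\, \p_x\eta)^2$, and the Taylor-expanded denominator $1/(1+(\p_x\eta)^2)$ combine to produce a long list of products of sines and cosines of integer harmonics of $x$. Each such product reduces to a trigonometric polynomial by elementary identities, but tracking signs and keeping all contributions at a given total order accounted for is delicate; this routine but voluminous algebra is most reliably performed by a symbolic algebra system, in the same spirit as the companion Mathematica file of the paper.
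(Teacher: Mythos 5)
Your route is viable and essentially standard, but it is genuinely different from the one the paper takes. You work entirely on the free surface in the Zakharov--Craig--Sulem variables, expand the Dirichlet--Neumann operator via the Craig--Sulem recursion up to the cubic term $G_3$, and solve a singular $2\times2$ Fourier-mode system order by order, extracting the speed corrections from the Fredholm alternative at $k=\pm1$. The paper's Appendix \ref{appendix:Stokes} instead works with the harmonic bulk potential $\phi(x,y)$ in the lower half-plane, Taylor-expands the Bernoulli and kinematic conditions about $y=0$, and -- importantly -- does not redo the whole computation: it imports the coefficients through order $\eps^3$ (including $c^2=\sqrt g/2$, your $c_{1,2}$) from \cite{NguyenStrauss}, and only computes $c^3=0$, $\phi^4$, $\eta^4$, finally recovering $\psi^4$ from $\psi=\phi(x,\eta(x))$. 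The bulk formulation buys a much shorter fourth-order calculation (no $G_2$, $G_3$ needed, and the elliptic problem for $\phi^4$ is solved by inspection with a single harmonic $a\sin(2x)e^{2y}$), whereas your scheme recomputes everything from scratch but is more systematic and mechanizable.

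One point in your write-up needs attention. At odd orders the matrix is singular at $k=\pm1$, and after the solvability condition is imposed the solution is only determined up to an element of the kernel; so the $\cos x$ component of $\eta_3$ (the coefficient $\tfrac18$ in \eqref{expand:star}) and the corresponding $\sin x$ part of $\psi_3$ are \emph{not} ``read off'' from the mode equations alone. They are fixed only by the normalization defining the parameter $\eps$, i.e.\ by the specific parametrization of the unique curve in Theorem \ref{theo:Stokes} (that of \cite{Berti-DN}, also used in \cite{NguyenStrauss}); with a different but equally legitimate normalization (say $\eps$ equal to the first Fourier coefficient of $\eta$) the $\eps^3\cos x$ term would be absent and the higher coefficients would change. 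Your Fredholm-alternative step correctly yields $c_{1,2}=\sqrt g/2$ and the $k=\pm3$ (and, at fourth order, $k=\pm2,\pm4$) coefficients, and your symmetry argument for the evenness of $c_1(\eps)$ is fine, but to land exactly on \eqref{expand:star} you must state and use the normalization that pins down the kernel component at each odd order.
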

\subsection{Transverse perturbations, linearization, and transformations}
Let us fix a Stokes wave $(\eta^*, \psi^*, c^*)$ with amplitude $\eps$ and  perturb it by a two-dimensional perturbation:
\bq\label{perturbation:etapsi}
\eta(x, y)=\eta^*(x)+\nu \ol{\eta}(x, y),\quad \psi(x, y)=\psi^*(x)+\nu \ol{\psi}(x, y),\quad |\nu|\ll 1.
\eq
Before linearizing the dynamics of the full water wave equations using \eqref{perturbation:etapsi}, we  recall the  shape-derivative formula for the derivative of $G(\eta)\psi$ with respect to $\eta$. 
\begin{theo}[\protect{Theorem 3.20, \cite{Lannes}, and Proposition 2.11, \cite{ABZ1}}]\label{theo:shapederi}
Let $\eta: \T^2\to \Rr$ and $\psi: \T^2\to \Rr$. The shape-derivative of $G(\eta)\psi$ with respect to $\eta$ is denoted by 
\bq
G'(\eta)\overline{\eta}\psi=\lim_{h\to 0}\frac{1}{h}\left(G(\eta+h\overline{\eta})\psi-G(\eta)\psi\right).
\eq
We have
\bq\label{shapederi:form}
 G'(\eta)\ol{\eta}\psi=-G(\eta)(\ol\eta B(\eta)\psi)-\cnx(\ol\eta V(\eta)\psi),
\eq
where
\bq\label{def:BV:0}
B(\eta)\psi:=\frac{G(\eta)\psi+\na \eta\cdot \na \psi }{1+|\na \eta|^2},\quad V(\eta)\psi:=\na \psi-B\na \psi.
\eq
\end{theo}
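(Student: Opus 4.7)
The plan is to carry out a direct shape-derivative computation. Denote by $\vartheta^h$ the harmonic extension of the fixed Dirichlet datum $\psi$ into the perturbed fluid domain $D^h=\{z<\eta+h\ol\eta\}$, with $\vartheta^0=\vartheta$, and set $\dot\vartheta := \p_h\vartheta^h|_{h=0}$, a function on $D$. Then $\dot\vartheta$ is harmonic in $D$ and decays at infinity. Its boundary trace on $S$ is obtained by differentiating the Dirichlet condition $\vartheta^h(x,y,\eta+h\ol\eta)=\psi(x,y)$ in $h$ at $h=0$:
\[
\dot\vartheta|_S = -\ol\eta\, \p_z\vartheta|_S = -\ol\eta\, B(\eta)\psi,
\]
where we used that $B(\eta)\psi = \p_z\vartheta|_S$, itself an immediate consequence of the tangential identity $\na\psi = \na_{x,y}\vartheta|_S + \p_z\vartheta|_S\, \na\eta$ obtained by differentiating $\vartheta(x,y,\eta)=\psi$ in $(x,y)$ and using the definition of $B$.

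Next I would differentiate \eqref{def:Gh} in $h$. Applying $\p_h|_{h=0}$ to
\[
G(\eta+h\ol\eta)\psi=\big[\p_z\vartheta^h - \na_{x,y}\vartheta^h\cdot \na(\eta+h\ol\eta)\big]\big|_{z=\eta+h\ol\eta}
\]
and tracking the three contributions from the chain rule (the $h$-derivative of $\vartheta^h$, the moving evaluation point $z=\eta+h\ol\eta$, and the explicit $h$-dependence of $\na(\eta+h\ol\eta)$) yields
\[
G'(\eta)\ol\eta\,\psi = \big(\p_z\dot\vartheta - \na_{x,y}\dot\vartheta\cdot \na\eta\big)\big|_S + \ol\eta\,\big(\p_z^2\vartheta - \p_z\na_{x,y}\vartheta\cdot \na\eta\big)\big|_S - \na_{x,y}\vartheta\big|_S \cdot \na\ol\eta.
\]
The first parenthesis is exactly $G(\eta)$ applied to the boundary trace of $\dot\vartheta$, hence equals $-G(\eta)(\ol\eta B(\eta)\psi)$; and in the last term $\na_{x,y}\vartheta|_S = \na\psi - B(\eta)\psi\,\na\eta = V(\eta)\psi$.

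It remains to identify the middle parenthesis with $-\cnx V(\eta)\psi$. Since $V(\eta)\psi = \na_{x,y}\vartheta|_{z=\eta}$, the chain rule gives $\cnx V = \Delta_{x,y}\vartheta|_S + \p_z\na_{x,y}\vartheta|_S\cdot \na\eta$, while harmonicity of $\vartheta$ yields $\Delta_{x,y}\vartheta = -\p_z^2\vartheta$. Hence the middle parenthesis equals $-\cnx V(\eta)\psi$, so the sum of the second and third contributions above is $-\ol\eta\,\cnx V - V\cdot \na\ol\eta = -\cnx(\ol\eta V(\eta)\psi)$, producing \eqref{shapederi:form}.

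The main obstacle is not the algebra above but the justification that $h\mapsto\vartheta^h$ is actually differentiable in an appropriate function-space sense and that $\dot\vartheta$ has the regularity needed for these surface manipulations. A clean way to handle this is to flatten each perturbed domain via $(x,y,z)\mapsto(x,y,z-\eta-h\ol\eta)$, reducing the problem to an elliptic equation on the fixed half-space with coefficients depending smoothly on $h$, and then applying the implicit function theorem to obtain the smooth dependence of the flattened $\vartheta^h$ on $h$. This is the route taken in \cite{Lannes, ABZ1}; once it is in place, the algebraic identities above yield the formula with no further difficulty.
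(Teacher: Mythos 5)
Your formal computation is correct: the trace identity $\dot\vartheta|_S=-\ol\eta\,\p_z\vartheta|_S$, the identifications $\p_z\vartheta|_S=B(\eta)\psi$ and $\na_{x,y}\vartheta|_S=\na\psi-B(\eta)\psi\,\na\eta=V(\eta)\psi$ (note you have implicitly corrected the typo in \eqref{def:BV:0}, where $V$ should be $\na\psi-B\na\eta$), and the use of harmonicity to convert the curvature-of-evaluation term into $-\ol\eta\,\cnx V-V\cdot\na\ol\eta=-\cnx(\ol\eta\,V)$ all check out and assemble into \eqref{shapederi:form}. The paper itself offers no proof of this statement; it is quoted from Theorem 3.20 of \cite{Lannes} and Proposition 2.11 of \cite{ABZ1}, and your argument is essentially the classical derivation underlying those references, with the genuinely delicate point — differentiability of $h\mapsto\vartheta^h$ across the varying domains — correctly isolated and deferred to the standard flattening plus smooth-dependence (implicit function theorem) argument, which is exactly how the cited works make it rigorous. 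So there is no gap relative to what the paper relies on; your write-up is an acceptable self-contained sketch of the cited result.
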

Using the formulas \eqref{shapederi:form}, \eqref{def:BV:0}, and the fact that the Stokes wave  is independent of the transverse variable $y$, we can {\it linearize} \eqref{sys:Stokes} around it to obtain 
\begin{align}
&\p_t \ol\eta=\frac{\delta F_1(\eta^*, \psi^*, c^*)}{\delta(\eta, \psi)}(\ol\eta, \ol \psi)=\p_x\big((c_1^*-V^*)\ol\eta\big)+G(\eta^*) (\ol\psi-{B^*} \ol\eta),\label{lin:eta}\\
&\p_t\ol \psi=\frac{\delta F_2(\eta^*, \psi^*, c^*, P^*)}{\delta(\eta, \psi)}(\ol\eta, \ol \psi)=(c_1^*-{V^*})\p_x\ol\psi +{B^*}G(\eta^*)(\ol\psi-{B^*} \ol\eta)-{B^*}\p_x{V^*}\ol\eta-g\ol\eta,\label{lin:psi}
\end{align}
where
\bq\label{def:B*V*}
B^*:=B(\eta^*)\psi^*=\frac{G(\eta^*)\psi^*+\p_x\psi^*\p_x\eta^*}{1+|\p_x\eta^*|^2},\qquad V^*:=V(\eta^*)\psi^*=\p_x\psi^*-B^*\p_x\eta^*.
\eq 
A similar derivation was done in Lemma 3.2, \cite{NguyenStrauss} for perturbations $\ol{\eta}$ and $\ol{\psi}$ depending only on $x$. 
%
It is worthwhile noting that in fact  $B^*$ and $V^*$ are the vertical and horizontal 
components, respectively,  of the fluid velocity of the Stoke wave at the free surface $\{z=\eta^*(x)\}$. 
Where $G(\eta^*)$ acts on functions of $(x, y)$ we consider $\eta^*(x, y)\equiv \eta^*(x)$. 
We change variables to the so-called {\it good unknowns} (\`a la Alinhac)
\bq\label{def:v12}
v_1(x, y)=\ol\eta\quad\text{and}~ v_2(x, y)=\ol\psi-{B^*}\ol\eta
\eq
satisfying
 \begin{align}\label{eq:v1}
&\p_t v_1=\p_x\big((c_1^*-{V^*})v_1\big)+G(\eta^*)v_2,\\ \label{eq:v2}
&\p_t v_2=-\big(g+({V^*}-c_1^*)\p_x{B^*} \big)v_1+(c_1^*-{V^*})\p_xv_2.
\end{align}
Choosing a simple form for the transverse perturbations, we consider perturbations $\ol{\eta}$ and $\ol{\psi}$ that have wave number $\alpha \in \Rr$ in the transverse variable $y$: 
\bq
\ol{\eta}(x, y)=\wt\eta(x)e^{i\alpha y},\quad \ol{\psi}(x, y)=\wt\psi(x)e^{i\alpha y}.  
\eq
Consequently, the good unknowns have the form
\bq
v_1(x, y)=\wt\eta(x)e^{i\alpha y}:=\wt{v_1}(x)e^{i\alpha y},\quad v_2(x, y)=(\wt\psi-{B^*}\wt\eta)e^{i\alpha y}:=\wt{v_2}(x)e^{i\alpha y}.
\eq
Then, in the linearized system \eqref{eq:v1}-\eqref{eq:v2}, the most difficult term to analyze is $G(\eta^*)(\wt{v_2}(x)e^{i\alpha y})$. To handle this term, we shall flatten the surface 
$\{z=\eta^*(x)\}$  using an extension of the Riemann mapping  in \cite{NguyenStrauss} to three dimensions. 
For the sake of clarity, from now on we denote the independent variables in the physical space 
$D$ by $(X,y,Z)$.   The {\it Riemann mapping} is as follows. 
 \begin{prop}[\protect{Proposition 3.3, \cite{NguyenStrauss}}]  \label{prop:Riemann}
 There exists a  holomorphic bijection $X(x, z)+iZ(x, z)$ from the half-plane $\Rr^2_-=\{(x, z)\in \Rr^2: z<0\}$  onto the fluid region $\{(X,Z)\in \Rr^2: Z<\eta^*(X)\}$ with the following properties.  
 \begin{itemize}
 \item[(i)] $X(x+2\pi, z)=2\pi +X(x, z)$ and $Z(x+2\pi, z)=Z(x, z)$ for all $(x, z)\in \Rr^2_-:=\Rr\times (-\infty, 0)$; 
 \newline $X$ is odd in $x$ and $Z$ is even in $x$;
 \item[(ii)] $X+iZ$ maps  $\{(x, 0): x\in \Rr\}$ onto $\{(x, \eta^*(x)): x\in \Rr\}$;\\
 \item[(iii)] Defining the ``Riemann stretch" as 
 \bq\label{def:zeta}
 \zeta(x)=X(x, 0),
 \eq 
we have the Fourier expansion 
 \bq\label{z1}
X(x, z)=x-\frac{i}{2\pi}\sum_{k\in \Zz\setminus\{0\}}e^{ikx}\mathrm{sign}(k)e^{|k|z}\widehat{\eta^*\circ \zeta}(k)\quad\forall (x, z)\in \Rr^2_-.
\eq
Here, 
\[
\widehat{f}(k)=\int_\T f(x)e^{-ikx}dx;
\]
\item[(iv)] $\| \na_{x, z}(X-x)\|_{ L^\infty(\Rr^2_-)}+ \|\na_{x, z}(Z-z)\|_{ L^\infty(\Rr^2_-)}\le C\eps$.
 \end{itemize}
 \end{prop}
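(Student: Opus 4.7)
\emph{Proof plan.} My strategy is to construct the conformal map by first solving for its boundary trace, the Riemann stretch $\zeta(x) = X(x, 0)$, via a nonlinear scalar equation, and then extending to the interior by holomorphic prolongation. Writing $X + iZ = (x + iz) + h(x + iz)$ with $h$ holomorphic on $\Rr^2_-$, $2\pi$-periodic in $x$, and vanishing as $z \to -\infty$, the last two conditions force the boundary trace $h(\cdot, 0)$ to be supported on the nonpositive Fourier modes. Equivalently, one has the Hilbert transform identity $Z(x, 0) = -\mathcal{H}[X(\cdot, 0) - \mathrm{id}](x)$, where $\widehat{\mathcal{H}f}(k) = -i\,\sign(k)\,\widehat{f}(k)$. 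Combined with the parameterization $Z(x, 0) = \eta^*(\zeta(x))$ demanded by~(ii), this collapses to the scalar equation
\[
\zeta(x) = x - \mathcal{H}[\eta^* \circ \zeta](x).
\]

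\emph{Solving and extending.} Setting $u = \zeta - \mathrm{id}$ and $\Phi(u, \eps) := u + \mathcal{H}[\eta^*(\cdot + u(\cdot); \eps)]$, I would apply the analytic implicit function theorem in $H^s(\T)$ for $s > 5/2$: the map $\Phi$ is real-analytic near $(0, 0)$ by Theorem~\ref{theo:Stokes} and the algebra property of $H^s$, with $\Phi(0, 0) = 0$ and $\partial_u \Phi(0, 0) = \mathrm{id}$ since $\eta^*|_{\eps = 0} \equiv 0$. This produces an analytic family $u(\cdot; \eps)$ with $\|u\|_{H^s} = O(\eps)$. I would then define $(X, Z)$ on $\Rr^2_-$ by harmonic extension of the boundary data $(\zeta, \eta^* \circ \zeta)$ via the periodic Poisson kernel; the holomorphy of $(X - x) + iZ$ is automatic from the Fourier support on nonpositive modes, and a mode-by-mode computation using $e^{ik(x + iz)} = e^{ikx + |k|z}$ for $k < 0$ together with the identity $\widehat{\zeta - \mathrm{id}}(k) = -i\,\sign(k)\,\widehat{\eta^* \circ \zeta}(k)$ produces the explicit formula~(iii). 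The parity in~(i) follows from uniqueness and the fact that $\eta^*$ is even and $\mathcal{H}$ anticommutes with the reflection $x \mapsto -x$, so $u(x) \mapsto -u(-x)$ preserves $\Phi$. The estimate~(iv) then follows from $\|u\|_{H^s} = O(\eps)$ and Sobolev embedding applied to (iii).

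\emph{Main obstacle.} The only delicate point is upgrading the local diffeomorphism---which follows from the Jacobian estimate in~(iv) for small $\eps$---to a global bijection of $\Rr^2_-$ onto the fluid domain. This is handled by combining the $2\pi$-translation covariance in~(i) with the fact that the boundary correspondence $x \mapsto \zeta(x)$ is a monotone diffeomorphism of $\Rr$ for small $\eps$; together these promote injectivity on a single period strip and then on all of $\Rr^2_-$ via a standard boundary-correspondence/degree argument.
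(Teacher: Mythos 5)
Your plan is essentially the construction behind the cited result: note that the present paper does not prove this proposition but imports it from Proposition 3.3 of \cite{NguyenStrauss}, and the only piece it re-derives, the analyticity of $\zeta$ via the scalar equation at $z=0$ and the Analytic Implicit Function Theorem (Remark \ref{rema:XZzeta}), is exactly your first step; the harmonic/holomorphic extension with nonpositive modes, the parity-by-uniqueness argument, and the degree/boundary-correspondence upgrade to global bijectivity for small $\eps$ are the same standard route used there. One slip to fix: with your convention $\widehat{\mathcal{H}f}(k)=-i\,\mathrm{sign}(k)\widehat{f}(k)$, inverting your (correct) identity $Z(\cdot,0)=-\mathcal{H}[\zeta-\mathrm{id}]$ uses $\mathcal{H}^2=-\mathrm{Id}$ on mean-zero functions and yields $\zeta(x)=x+\mathcal{H}[\eta^*\circ\zeta](x)$, which is \eqref{z1} at $z=0$; the minus sign in your displayed scalar equation is inconsistent both with that identity and with the relation $\widehat{\zeta-\mathrm{id}}(k)=-i\,\mathrm{sign}(k)\,\widehat{\eta^*\circ\zeta}(k)$ that you correctly invoke later. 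The sign does not affect the implicit-function-theorem argument or the estimates, but it must be corrected for item (iii) to come out as stated.
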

 \begin{rema}\label{rema:XZzeta}
$X$, $Z$, and $\zeta$ are  analytic in $\eps$  with values in  Sobolev spaces. Indeed, evaluating \eqref{z1} at $z=0$ yields 
\[
\zeta(x)=x-\frac{i}{2\pi}\sum_{k\ne 0}e^{ikx}\mathrm{sign}(k)\widehat{\eta^*\circ \zeta}(k),
\]
where the right-hand side is  analytic in $\eps$ with values in Sobolev spaces  since $\eta^*$ is so. The analyticity of $\zeta$ in $\eps$ then follows from the Analytic Implicit Function Theorem. Next, we return to the formula \eqref{z1} in which  $\eta^*\circ \zeta$ is now analytic in $\eps$ with values in any Sobolev space provided $\eps$ is small enough.  Consequently $\widehat{\eta^*\circ \zeta}(k)$ is analytic in $\eps$ for all $k$, and the series in \eqref{z1} converges absolutely and is analytic with values in Sobolev spaces. 
\end{rema} 
  
After the Riemann transformation, the Dirichlet-Neumann operator  takes the form given 
in \eqref{flatten:G} below. The proof of Theorem \ref{theo:flattenG} will make use of the following trace result:
\begin{prop}[\protect{Section  3.2, Chapter IV, \cite{BoyerFabrie}}]\label{prop:trace}
Let $U\subset \Rr^n$ be Lipschitz domain with compact boundary and denote 
\bq
H_{\cnx}(U)=\{u\in L^2(U)^n: \cnx u\in L^2(U)\}.
\eq
 If $u\in H_{\cnx}(U)$ and $w\in H^1(U)$, then 
\bq\label{Stokes}
\int_U u\cdot \na w+\int_U w\cnx u=\langle \gamma_\nu(u), \gamma_0(w)\rangle_{H^{-\mez}(\p U), H^\mez(\p U)}
\eq
where $\gamma_0(w)$ is the trace of $w$ and $\gamma_\nu (u)$ is the trace of $u\cdot \nu$, $\nu$ being the unit outward normal to $\p U$. The trace operator 
\bq\label{normaltrace}
\gamma_\nu:\quad H_{\cnx}(U)\to H^{-\mez}(\p U)
\eq
is continuous.
\end{prop}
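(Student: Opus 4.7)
The plan is to establish this as the standard normal trace theorem for $H(\mathrm{div})$, which follows from a density-plus-duality argument once one has the classical surjective trace theorem for $H^1$ functions at hand. I would start from the case of smooth vector fields and then extend by continuity.

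First, I would verify the identity \eqref{Stokes} on a dense subclass. For $u\in C^\infty(\overline{U})^n$ and $w\in C^\infty(\overline{U})$, the classical divergence theorem on Lipschitz domains (which itself follows from approximation of $\partial U$ by smooth hypersurfaces combined with a partition of unity and local flattening) gives
\begin{equation*}
\int_U u\cdot \na w +\int_U w\,\cnx u = \int_{\partial U} (u\cdot \nu)\, w\, dS.
\end{equation*}
For fixed smooth $u$, the right-hand side defines a bounded linear functional on $H^{1/2}(\partial U)$, because $u\cdot\nu \in L^\infty(\partial U)$ and $H^{1/2}(\partial U)\hookrightarrow L^2(\partial U)$.

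Next I would promote this to $u\in H_{\cnx}(U)$ and $w\in H^1(U)$ by duality. The key input is the classical trace/extension theorem: $\gamma_0:H^1(U)\to H^{1/2}(\partial U)$ is surjective and admits a bounded right inverse $E:H^{1/2}(\partial U)\to H^1(U)$ with $\|Eg\|_{H^1(U)}\le C\|g\|_{H^{1/2}(\partial U)}$. Given $u\in H_{\cnx}(U)$, I would define the candidate normal trace by
\begin{equation*}
\langle \gamma_\nu(u),\, g\rangle := \int_U u\cdot \na (Eg) + \int_U (Eg)\,\cnx u,\qquad g\in H^{1/2}(\partial U).
\end{equation*}
Cauchy--Schwarz immediately yields
\begin{equation*}
|\langle \gamma_\nu(u),\, g\rangle|\le \bigl(\|u\|_{L^2(U)}+\|\cnx u\|_{L^2(U)}\bigr)\|Eg\|_{H^1(U)}\le C\|u\|_{H_{\cnx}(U)}\|g\|_{H^{1/2}(\partial U)},
\end{equation*}
so that $\gamma_\nu(u)\in H^{-1/2}(\partial U)$ with the advertised continuity. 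One then checks that the definition does not depend on the choice of lifting $E$: the difference of two liftings lies in $H^1_0(U)$, and for any $w_0\in H^1_0(U)$ approximated in $H^1$ by $C_c^\infty(U)$ functions, $\int_U u\cdot \na w_0 +\int_U w_0\,\cnx u = 0$ by integration by parts on the smooth approximants and passage to the limit using $u\in H_{\cnx}(U)$.

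Finally I would verify that this definition recovers the pointwise trace on smooth functions (so the notation $\gamma_\nu$ is consistent) and that \eqref{Stokes} then extends to all $u\in H_{\cnx}(U)$ and $w\in H^1(U)$ by writing $w=E(\gamma_0 w)+w_0$ with $w_0\in H^1_0(U)$ and using the two cases established above. The main technical obstacle is the existence of the bounded right inverse $E$ on a merely Lipschitz domain; I would invoke the standard construction via local flattening, partition of unity, and the half-space extension using a suitable Poisson-type kernel, which is exactly where Lipschitz regularity of $\partial U$ (rather than merely continuity) is essential.
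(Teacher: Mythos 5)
This proposition is quoted in the paper from Boyer--Fabrie without proof, so there is no internal argument to compare against; it is a standard textbook result. Your density-plus-duality proof --- verify the Green identity for smooth data, define $\gamma_\nu(u)$ on $g\in H^{1/2}(\partial U)$ through a bounded lifting $E:H^{1/2}(\partial U)\to H^1(U)$, check independence of the lifting by testing against $H^1_0(U)$ via $C_c^\infty$ approximation, and then recover \eqref{Stokes} for general $w$ by the splitting $w=E(\gamma_0 w)+w_0$ --- is correct and is essentially the argument of the cited reference.
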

\begin{theo}\label{theo:flattenG} 
For fixed  $\alpha\ne 0$, let $\beta=\alpha^2$. There exists $\iota>0$ such that for each $\eps \in (-\iota, \iota)$ there exists  a bounded linear operator $\cG_{\eps, \beta}\in \cL(H^\mez(\T), H^{-\mez}(\T))$   with the following properties.  Let $s\ge\frac12$.  
\begin{itemize}
\item[(i)] 
There exists $0<\eps_0(s)\le \iota$ such that for all $\eps \in (-\eps_0(s), \eps_0(s))$ 
the operator  $\cG_{\eps, \beta}$ is bounded from   $\cL(H^s(\T)$ to $H^{s-1}(\T))$. In addition, we have
\bq\label{cG:even}
(\cG(\bar f(-\cdot)))(x) = \overline{(\cG f)(-x)}
\eq
and 
\bq\label{form:G0}
\cG_{0, \beta}=(|D|^2+\beta)^\mez.
\eq
If $s=1$, the operator $\cG_{\eps, \beta}$ is self-adjoint on $L^2(\T)$.  
\item[(ii)] 
The mapping
\bq\label{mappimg:cG}
 (-\eps_0(s), \eps_0(s))\times (0, \infty)\ni  (\eps,\beta) \mapsto \cG_{\eps, \beta}\in \mathcal{L}(H^{s}(\T), H^{s-1}(\T))
\eq
 is analytic.

  \item[(iii)]  If $s\ge 1$ and $f_0\in H^s(\T)$, we have the identity 
\bq\label{flatten:G}
\cG_{\eps, \beta}(f_0\circ \zeta)(x)=e^{-i\alpha y}\zeta'(x)\big[ G(\eta^*)(f_0e^{i\alpha y})\big]\vert_{(\zeta(x), y)}
\eq
for a.e. $x\in \T$ and for all $y\in \Rr$. 
\end{itemize}
\end{theo}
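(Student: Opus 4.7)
The plan is to define $\cG_{\eps,\beta}$ through a two-dimensional elliptic problem in the pulled-back flat half-strip $\T\times\Rr_-$, and then verify the stated properties directly from this construction. Since $\eta^*$ is $y$-independent and the fluid region is invariant under $y$-translations, the three-dimensional harmonic extension of a Dirichlet datum of the form $f_0(X)e^{i\alpha y}$ separates as $\vartheta(X,y,Z)=\Theta(X,Z)e^{i\alpha y}$, where $\Theta$ solves the reduced two-dimensional problem $(\p_X^2+\p_Z^2-\beta)\Theta=0$ in $\{Z<\eta^*(X)\}$ with $\Theta\vert_{Z=\eta^*}=f_0$ and $\Theta\to 0$ as $Z\to-\infty$. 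Setting $\wt\Theta:=\Theta\circ\Phi$ with $\Phi(x,z)=(X(x,z),Z(x,z))$ the conformal map of Proposition \ref{prop:Riemann}, the conformality identity $\Delta_{x,z}(\Theta\circ\Phi)=|\Phi'|^2(\Delta_{X,Z}\Theta)\circ\Phi$ shows that $\wt\Theta$ solves
\bq\label{PP:ellpb}
-\Delta_{x,z}\wt\Theta+\beta q(x,z)\wt\Theta=0\ \text{in }\T\times\Rr_-,\qquad \wt\Theta(x,0)=f_0\circ\zeta,\qquad \wt\Theta\to 0 \text{ as } z\to-\infty,
\eq
with $q:=X_x^2+X_z^2$ bounded between $\mez$ and $2$ for small $|\eps|$, and $q-1$ of exponential decay in $z$ by the Fourier formula \eqref{z1}.

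I would then define $\cG_{\eps,\beta}g:=\p_z\wt\Theta\vert_{z=0}$ for $g\in H^\mez(\T)$, where $\wt\Theta$ is the unique solution of \eqref{PP:ellpb} with $g$ in place of $f_0\circ\zeta$. Existence and uniqueness follow from Lax-Milgram applied to the symmetric bilinear form $a(u,v):=\int_{\T\times\Rr_-}(\na u\cdot\na \ol v+\beta q\, u\,\ol v)\,dxdz$ after subtracting the explicit constant-coefficient lift $(e^{z\sqrt{|D|^2+\beta}}g)(x)$; the coercivity of $a$ relies crucially on $\beta>0$. The bounds $\cG_{\eps,\beta}:H^s(\T)\to H^{s-1}(\T)$ for $s\ge\mez$ then come from standard elliptic regularity up to the boundary. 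To verify (iii), I would invert the chain-rule system $\wt\Theta_x=X_x\Theta_X+Z_x\Theta_Z$, $\wt\Theta_z=-Z_x\Theta_X+X_x\Theta_Z$ using the Cauchy-Riemann equations $X_x=Z_z$, $X_z=-Z_x$. At $z=0$ one has $Z_x(x,0)=\eta^{*\prime}(\zeta(x))\,\zeta'(x)$, which causes the $\wt\Theta_x$ contributions to cancel in $\Theta_Z-\eta^{*\prime}(\zeta)\Theta_X$, leaving $\Theta_Z-\eta^{*\prime}(\zeta)\Theta_X=\wt\Theta_z(x,0)/\zeta'(x)$; multiplying by $\zeta'(x)$ and comparing with the definition \eqref{def:Gh} of $G(\eta^*)(f_0 e^{i\alpha y})$ yields exactly the formula \eqref{flatten:G}.

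The remaining claims in (i) are then immediate. At $\eps=0$, $\Phi$ is the identity and $q\equiv 1$, so the unique decaying solution is $\wt\Theta(x,z)=(e^{z\sqrt{|D|^2+\beta}}g)(x)$, whence $\cG_{0,\beta}=(|D|^2+\beta)^\mez$. The reflection identity \eqref{cG:even} follows from the parities in Proposition \ref{prop:Riemann}(i): since $X$ is odd and $Z$ is even in $x$, $q(-x,z)=q(x,z)$, and then $(x,z)\mapsto\ol{\wt\Theta(-x,z)}$ solves \eqref{PP:ellpb} with datum $\ol{g(-\cdot)}$. For $s=1$, self-adjointness on $L^2(\T)$ comes from Green's identity via Proposition \ref{prop:trace}: both $(\cG f,g)_{L^2}$ and $(f,\cG g)_{L^2}$ reduce to $a(u,v)$ with $u,v$ the lifts of $f,g$, and this form is symmetric.

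For the analyticity (ii), I would exploit that $q$ depends analytically on $\eps$ in Sobolev spaces on the strip (via Remark \ref{rema:XZzeta} and the exponential $z$-decay in \eqref{z1}) and that $-\Delta+\beta q$ is affine in $\beta$. Writing $-\Delta+\beta q=(-\Delta+\beta)+\beta(q-1)$ and inverting by a Neumann series around the explicit Fourier-multiplier resolvent of $-\Delta+\beta$ (which is itself analytic in $\beta\in(0,\infty)$) yields joint analyticity of the Dirichlet-to-Neumann map through the Analytic Implicit Function Theorem applied to \eqref{PP:ellpb}. The main obstacle I anticipate is the functional-analytic setup on the unbounded strip: one must work in weighted Sobolev spaces adapted to the exponential decay of $q-1$ at $z=-\infty$ so that the Neumann-series inversion converges in operator norm, and one must track the trace $\wt\Theta\mapsto\p_z\wt\Theta\vert_{z=0}$ at the correct regularity for the $H^s\to H^{s-1}$ mapping property. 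Once the right scale of weighted spaces is fixed, the elliptic theory, the normal-trace identification via Proposition \ref{prop:trace}, and the analytic parameter dependence all reduce to standard arguments.
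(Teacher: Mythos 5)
Your construction is the same as the paper's: you pull the separated three-dimensional problem back through the conformal map to the elliptic problem $\Delta\wt\Theta-\beta\cJ\wt\Theta=0$ on $\T\times\Rr_-$ (your $q$ is the paper's Jacobian $\cJ$), define $\cG_{\eps,\beta}g=\p_z\wt\Theta\vert_{z=0}$, get $H^s\to H^{s-1}$ bounds from the normal trace of Proposition \ref{prop:trace} plus elliptic regularity, read off \eqref{form:G0} at $\eps=0$ by Fourier series, get \eqref{cG:even} from the parities and uniqueness, and prove (iii) by the Cauchy--Riemann chain-rule computation at $z=0$. For (ii) your organization differs mildly: you invert $(-\Delta+\beta)+\beta(\cJ-1)$ by a Neumann series around the constant-coefficient resolvent, whereas the paper expands $\Theta$ directly as a double power series in $(\eps,\beta-\beta^0)$ and proves convergence by inductive elliptic estimates; the two are equivalent in substance, and the trace difficulty you flag at $s=1$ is resolved in the paper exactly by the device implicit in your series: every term beyond the zeroth gains two derivatives from the elliptic solve, so the tail converges in $H^{s+5/2}$ and its $\p_z$-trace is harmless, while the zeroth term is handled separately. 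Your worry about weighted spaces is unnecessary, since the exponential $z$-decay of $\cJ-1$ coming from \eqref{z1} already places it in unweighted Sobolev spaces on the strip.

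There is one genuine gap: the self-adjointness claim. Green's identity (the paper's use of Proposition \ref{prop:trace}) only shows that $\cG_{\eps,\beta}$, regarded as an unbounded operator on $L^2(\T)$ with domain $H^1(\T)$, is \emph{symmetric}; symmetry of a densely defined unbounded operator does not imply self-adjointness, and you must show that the domain of the adjoint is no larger than $H^1(\T)$. The paper closes this by invoking the invertibility of $\cG_{\eps,\beta}$ (Proposition 2.4 of \cite{Nguyen2023}): if $(\cG f,\ell)=(f,n)$ for all $f\in H^1(\T)$, set $m=\cG^{-1}n\in H^1(\T)$, so that $(\cG f,\ell)=(\cG f,m)$ for all $f$, and since $\cG f$ ranges over all of $L^2(\T)$ one concludes $\ell=m\in H^1(\T)$ and $n=\cG\ell$. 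Some such argument (invertibility, or an essential self-adjointness/deficiency-index argument) is needed and is absent from your proposal. A smaller omission of the same kind: your chain-rule proof of (iii) requires $s>1$ for the pointwise trace identity, so the endpoint case $s=1$ needs the paper's density argument, approximating $f_0\in H^1(\T)$ by $H^2$ functions and passing to the limit in $L^2$ using the continuity \eqref{cont:DN} and \eqref{bounded:cT}.
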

\begin{proof}
We consider $\alpha>0$ throughout the proof and denote $\T^2_\alpha=\T\times (\Rr/ 2\pi \alpha \Zz)$. Viewing $\eta_*$ as a function on $\T^2_\alpha$ that is independent of $y$, it is known that
\bq\label{cont:DN}
G(\eta^*)\in \cL(H^s(\T^2_\alpha), H^{s-1}(\T^2_\alpha)),\quad \forall\ s \ge \tfrac12. 
\eq
See Theorems 3.8 and 3.12 in \cite{ABZ3}. 
 
{\bf 1.}  In order to calculate  $ G(\eta^*)(f_0e^{i\alpha y})$ for $f_0\in H^\mez(\T)$,  we consider the problem 
\[
\begin{cases}
\Delta_{X, y, Z}\vartheta=0\quad\text{in } \{Z<\eta^*(X)\}\subset \Rr^3,\\
\vartheta(X, y, \eta^*(X))=f_0(X)e^{i\alpha y},\\
\na_{X, Z}\vartheta\to 0\quad\text{as } Z\to -\infty.
\end{cases}
\]
Of course, 
the solution $\vartheta$ has the form $\vartheta(X, y, Z)=\tt(X, Z)e^{i\alpha y}$, where $\tt$ satisfies
\bq\label{system:tt}
\begin{cases}
\Delta_{X,  Z}\tt -\alpha^2\tt=0\quad\text{in } \{Z<\eta^*(X)\} \subset \Rr^2,\\
\tt(X, \eta^*(X))=f_0(X),\\
\na_{X, Z}\tt\to 0\quad\text{as } Z\to -\infty.
\end{cases}
\eq
We use the Riemann mapping $(X, Z)$ in Proposition \ref{prop:Riemann} to flatten the 
domain $\{ Z<\eta_*(X)\}$.  It has the Jacobian 
\[
\cJ=|\p_xX|^2+|\p_z X|^2.
\]  
For an arbitrary function $f\in H^\mez(\T)$,  we will define 
function $\Tt$ by solving 
\bq\label{system:Theta}    \begin{cases}
\Delta_{x, z} \Tt - \beta\cJ\Tt=0\quad\text{in } \{(x, z): z<0\},\\
\Tt(x, 0)=f(x),\\
\na_{x, z}\Tt\in L^2(\T\times \Rr_-).   
\end{cases}\eq
 In order to specify $f$, we will denote $\Theta = \Theta_f$. Then $\theta(X(x,z),Z(x,z))=\Tt _{f_0\circ\zeta} (x,z)$ is defined on the 
lower half-space $\{(x,z):\ z<0\}$.  
We note that by virtue of Theorem \ref{theo:Stokes}, $\eta^*$ 
can be taken as smooth as we wish, and hence so can $\zeta$, provided $\eps$ is small enough. 

To be precise, we now consider  $\eps \in (-\iota, \iota)$ where $\iota>0$ is sufficiently small so that $\cJ$ is bounded with bounded inverse  independent of $\eps$. By classical elliptic regularity, \eqref{system:Theta} has a unique solution $\Tt\in H^1(\T\times \Rr_-)$ for any  $f\in H^\mez(\T)$. Consequently   $\Delta_{x, z}\Tt=\beta \cJ\Tt\in  L^2(\T\times \Rr_-)$. Since both the vector field $\na_{x, z}\Tt$ and its divergence are in $L^2(\T\times \Rr_-)$,  Proposition \ref{prop:trace} implies  the trace of the normal component $\na_{x, z}\Tt(\cdot, 0)\cdot (0, 1)\equiv \p_z\Tt(\cdot, 0)$ 
 makes sense in  $H^{-\mez}(\T)$ and 
\bq\label{dzTt:low}
\begin{aligned}
\| \p_z\Tt(\cdot, 0)\|_{H^{-\mez}(\T)}&\le C\Big(\| \Tt\|_{L^2(\T\times \Rr_-)}+\| \na_{x, z}\Tt\|_{L^2(\T\times \Rr_-)}\big) \\
&\le C(\beta)\| f\|_{H^\mez(\T)},\quad \eps\in (-\iota, \iota).
\end{aligned}
\eq 
Now, if $f\in H^2(\T)$ and  $\iota$ is chosen smaller if necessary, 
 we know that $\Tt\in H^{\frac{5}{2}}(\T\times \Rr_-)$.  
Hence the trace theorem applied to $\p_z\Tt\in H^\tdm(\T\times \Rr_-)$ 
yields $\p_z\Tt(\cdot, 0)\in H^1(\T)$ with 
\bq\label{dzTt:H1}
\| \p_z\Tt(\cdot, 0)\|_{H^1(\T)}\le C(\beta)\| f\|_{H^2(\T)},\quad \eps\in (-\iota, \iota).
\eq
In view of \eqref{dzTt:low} and \eqref{dzTt:H1} together with linear interpolation,  the linear operator $\cG_{\eps, \beta}$ defined by 
\bq\label{def:cG}
\cG_{\eps, \beta} f=\p_z\Tt_f(\cdot, 0) 
\eq
is  bounded  from $H^s(\T)$ to  $H^{s-1}(\T)$ for all $s \in [\mez, 2]$ and $\eps \in (-\iota, \iota)$. 
Using Theorem \ref{theo:Stokes}, elliptic regularity, 
and the trace theorem for the range $s\in (2, \infty)$, we also deduce that 
\bq\label{bounded:cT}
\cG_{\eps, \beta}\in \cL(H^s(\T), H^{s-1}(\T))\quad\forall s\ge \tfrac12,~\forall \eps \in (-\eps_0(s), \eps_0(s))
\eq 
Moreover, $\cG_{\eps, \beta}$ is invertible, as proven for example in Proposition 2.4 in \cite{Nguyen2023}.  For the preceding interpolation argument we have chosen $\eps_0(s)=\eps(2)$ for all $s\in [\mez, 2]$.  In what follows, $\eps_0(s)$ may  shrink from one line to another.
Additionally, if  $\eps=0$, the system \eqref{system:Theta} can be solved explicitly by Fourier series and we obtain \eqref{form:G0}. 

 Next,  we prove \eqref{cG:even}. By Proposition \ref{prop:Riemann} (i), $X(x, z)$ is odd in $x$, hence $J=|\p_xX|^2+|\p_zX|^2$ is even in $x$. We recall that $\eta_*$ is also even. Therefore, $\Tt_{\bar f(-\cdot)}(x,0) -=\overline {\Tt_f(-x,0)}$ by uniqueness of solutions to the elliptic problem \eqref{system:Theta}. It follows that $(\cG(\bar f(-\cdot)))(x) = \overline{(\cG f)(-x)}$ as claimed.  

For the self-adjointness, we fix $\eps$ and $\beta$ and regard 
$\cG=\cG_{\eps, \beta}$ as an unbounded operator on $L^2(\T)$ with domain $H^1(\T)$.  For $f, m\in H^1(\T)$, we apply the Stokes formula \eqref{Stokes} with $u=\na \Tt_f$ and $w= \Tt_m$ to have 
\[
\begin{aligned}
\langle \cG f, m\rangle_{H^{-\mez}, H^\mez}&=\int_{\T\times \Rr_-}\na \Tt_f\cdot \na \Tt_mdxdz+\int_{\T\times \Rr_-}\Tt_m\Delta \Tt_f\\
&=\int_{\T\times \Rr_-}\na \Tt_f\cdot \na \Tt_mdxdz+\int_{\T\times \Rr_-}\beta J  \Tt_m\Tt_f dxdz.
\end{aligned}
\]
Since the right-hand side is symmetric in $\Tt_m$ and $\Tt_f$, we deduce that  
\[
\langle \cG f, m\rangle_{H^{-\mez}, H^\mez}=\langle \cG m, f\rangle_{H^{-\mez}, H^\mez}.
\]
By \eqref{bounded:cT} we have $\cG f$, $\cG m\in L^2(\T)$, whence $(\cG f, m)=(\cG m, f)$. Thus $\cG$ is a symmetric operator.  In order to prove the self-adjointness, let $\ell,n\in L^2(\T)$ satisfy 
$(\cG f,\ell)=(f,n)$ for all $f\in H^1(\T)$.  It is required to prove that $\ell\in H^1(\T)$ and $n=\cG\ell$.   
Indeed,  the invertibility of $\cG$ implies $m:=\cG^{-1}(n)\in H^1(\T)$.  
Then $(\cG f,\ell)=(f,\cG m)=(\cG f,m)$ for all $f\in H^1(\T)$.  
Since $\cG f$ is an arbitrary function in $L^2(\T)$, we deduce that $\ell=m\in H^1(\T)$, 
so that $n=\cG \ell$ as required. 

Thus we have completed the proof of  (i).

{\bf 2.}  Now we shall prove (iii).  
Recalling Proposition  \ref{prop:Riemann}, we have 
$X(x,0=\zeta(x),\ Z(x,0)=\eta^*(\zeta(x))$ and 
\[ 
\frac{\p Z}{\p z}(x,0)=\frac{\p X}{\p x}(x,0)= \zeta'(x), \quad 
\frac{\p X}{\p z}(x,0)= -\frac {\p Z}{\p x}(x,0) = -\p_x\eta^*(\zeta(x))\zeta'(x).    
\] 
Hence for $s>1$, $\eps \in (-\eps_0(s), \eps_0(s))$, and $f_0\in H^s(\T)$,  the trace theorem implies $\p_z\Tt(\cdot, 0)\in H^{s-1}(\T)$ and the chain rule yields 
\bq\label{cT:G}
\begin{aligned}
\cG_{\eps, \beta}f(x)=\p_z\Theta(x, 0)&=\zeta'(x)\Big[\tt_z\big(\zeta(x), y, \eta^*(\zeta(x)\big)-\tt_x\big(\zeta(x), y, \eta^*(\zeta(x)\big)\p_x\eta^*(\zeta(x)) \Big]\\
&=e^{-i\alpha y}\zeta'(\cdot)\big[G(\eta^*)(f_0e^{i\alpha y})\big]\vert_{(\zeta(x), y)}
\end{aligned}
\eq
for a.e. $x\in \Rr$ and for all $y \in \Rr$. This proves \eqref{flatten:G} for $s>1$.

In order to also prove \eqref{flatten:G} for $s=1$, we fix an arbitrary $\eps \in (-\eps_0(2), \eps_0(2))$ 
and let $f_0 \in H^1(\T)$. 
Let $f_{0, n}$ be a sequence in $H^2(\T)$ converging to $f_0$ in $H^1(\T)$.  
Then $f_{0, n}(x)e^{i\alpha y}\to f_0(x)e^{i\alpha y}$ in $H^1(\T^2_\alpha)$, 
where $\T^2_\alpha=\T\times (\Rr/ 2\pi \alpha \Zz)$. 
The continuity of the linear operator in \eqref{cont:DN} implies that $G(\eta^*)(f_{0, n}e^{i\alpha y})$ converges to $G(\eta^*)(f_0e^{i\alpha y})$ in $L^2(\T^2_\alpha)$. 
Consequently we obtain the convergence of the composition 
\[
e^{-i\alpha y}\zeta'(x)\big[G(\eta^*)(f_{0, n}e^{i\alpha y})\big]\vert_{(\zeta(x), y)} \to e^{-i\alpha y}\zeta'(x)\big[G(\eta^*)(f_{0}e^{i\alpha y})\big]\vert_{(\zeta(x), y)}\quad\text{ in}~ L^2(\T^2_\alpha).
\]
On the other hand, \eqref{cT:G} and \eqref{bounded:cT} imply that 
\[
\forall y\in \Rr,\quad e^{-i\alpha y}\zeta'(\cdot)\big[G(\eta^*)(f_{0, n}e^{i\alpha y})\big]\vert_{(\zeta(\cdot), y)}=\cG_{\eps, \beta} f_n \to \cG_{\eps, \beta}f\quad\text{in }~L^2(\T),
\]
where $f_n:=f_{0, n}\circ \zeta\to f$ in $H^1(\T)$.   Therefore 
\[
e^{-i\alpha y}\zeta'(x)\big[ G(\eta^*)(f_0e^{i\alpha y})\big]\vert_{(\zeta(x), y)}=\cG_{\eps, \beta}(f_0\circ \zeta)(x)
\]
for a.e. $x\in \T$ and for all $y\in \Rr$. This concludes the proof of (iii).
\\ \\
{\bf 3.}  In order to prove the analyticity (ii), we fix  $s\ge \mez$ unless stated otherwise. We shall write $J(\eps)\equiv J(x, z; \eps)$ and $\Tt(\eps, \beta)\equiv\Tt(x, z; \eps, \beta)$. Choosing $\eps_0(s)$ sufficiently small, we can ensure that $\cJ(\eps)\in H^{s+100}(\T)$. 
 Sobolev estimates for the elliptic operator $\Delta_{x, z}-\beta \cJ$ yield the uniform-in-$\eps$ bound 
 \bq\label{ub:Tt}
 \| \Tt(\eps, \beta)\|_{H^{s+\mez}(\T\times \Rr_-)}\le C_1(\beta, s)\| f\|_{H^s(\T)},\quad s\ge \tfrac12,
 \eq
 where $C(\beta, s)$ is bounded for $\beta$ in  compact sets of $(0, \infty)$. 
 
{\bf 3a.} We  begin by proving that  that for $f\in H^s(\T)$,  the solution $\Tt(\eps, \beta)$ is 
$C^\infty$  in $(\eps, \beta)\in (-\eps_0(s), \eps_0(s))\times (0, \infty)$ with values in $H^{s+\mez}(\T\times \Rr_-)$. For the differentiability of $\Tt$ in $\eps$, we let $u_{\eps, \beta}$ be the solution of the inhomogeneous  problem 
\bq\label{sys:ueps}
\begin{cases}
\Delta_{x, z} u_{\eps, \beta}-\beta \cJ(\eps)u_{\eps, \beta}
=\beta [\frac{d}{d\eps}\cJ(\eps)] \ 
\Tt(\eps, \beta)\quad\text{in } \T\times \Rr_-,\\
u_{\eps, \beta}(\cdot, 0)=0,
\end{cases}
\eq
which is obtained by formally differentiating \eqref{system:Theta} in $\eps$.  Using  \eqref{ub:Tt} to bound the right-hand side and then applying elliptic regularity, we find
\bq\label{ub:u}
\| u_{\eps, \beta}\|_{H^{s+\frac{5}{2}}(\T\times \Rr_-)}\le C_2(\beta, s)\| f\|_{H^s(\T)},\quad s\ge \tfrac12,
\eq
where $C_2(\beta, s)$ is bounded for $\beta$ in  compact sets of $(0, \infty)$. Now we fix $(\eps, \beta)\in (-\eps_0(s), \eps_0(s))\times (0, \infty)$. For small $h$, the expression $v_h:=\frac{\Tt(\eps+h, \beta)-\Tt(\eps, \beta)}{h}-u_{\eps, \beta}$ satisfies 
\[
 \begin{cases}
 \Delta_{x, z} v_h-\beta \cJ(\eps+h)v_h&=F_h\quad\text{in } \T\times \Rr_-,\\
 v_h(\cdot, 0)=0,
 \end{cases}
 \]
 where
 \[
F_h:= \beta\left[\frac{\cJ(\eps+h)-\cJ(\eps)}{h}-\frac{d}{d\eps}\cJ(\eps)\right]\Theta(\eps+h, \beta)+\beta \left[\cJ(\eps+h)-\cJ(\eps)\right]u_{\eps, \beta}.
 \]
 Combining \eqref{ub:Tt} and \eqref{ub:u} implies $ \| F_h\|_{H^{s+\mez}(\T\times \Rr_-)}\to 0$ as $h\to 0$.   Hence $v_h\to 0$ in $H^{s+\frac{5}{2}}(\T\times \Rr_-)$ as $h \to 0$. 
 We have proven that $\frac{\p}{\p\eps}\Tt(\eps, \beta)=u_{\eps, \beta}$ in $H^{s+\frac{1}{2}}(\T\times \Rr_-)$. An analogous argument shows that $\frac{\p}{\p \beta}\Tt(\eps, \beta)$ exists in $H^{s+\mez}(\T\times \Rr_-)$ and is the solution of 
 \bq\label{sys:dbetaTt}
 \begin{cases}
\Delta_{x, z}\frac{\p}{\p \beta}\Tt(\eps, \beta)-\beta\cJ(\eps)\frac{\p}{\p \beta}\Tt(\eps, \beta)=\cJ(\eps)\Tt(\eps, \beta)\quad\text{in } \T\times \Rr_-,\\
\frac{\p}{\p \beta}\Tt(\eps, \beta)\vert_{z=0}=0.
 \end{cases}
 \eq
  On the other hand, repeating the above argument for the problems \eqref{sys:ueps} and \eqref{sys:dbetaTt}, we deduce  that $\Tt$ is twice differentiable in $\eps$ and $\beta$. An induction argument yields that the mapping 
 \[
 (-\eps_0(s), \eps_0(s))\times (0, \infty)\ni (\eps, \beta) \mapsto \Tt(\eps, \beta)\in H^{s+\frac{1}{2}}(\T\times \Rr_-) 
\]
is $C^\infty$. 

{\bf 3b.} Our next task is to prove the analyticity of $\Tt(\eps, \beta)$. Fix arbitrary $(\eps^0, \beta^0)\in (\-\eps_0(s), \eps_0(s))\times (0, \infty)$. If we choose $\eps_0(s)$  small enough, the analyticity in $\eps$ of $X$ and $Z$ (see Remark \ref{rema:XZzeta}) implies that  the mapping 
\[
(-\eps_0(s), \eps_0(s))\ni \eps\mapsto \cJ\in H^{s+10}(\T\times \Rr_-)
\]
is analytic, where $\cJ = |\p_xX|^2+|\p_zX|^2$.   
So for any compact subinterval $I=[-a, a]\subset (-\eps_0(s), \eps_0(s))$, there exists a constant $M=M(I)>0$ such that for all $\eps \in I$ we have 
\[
\cJ(x, z; \eps)=\sum_{j=0}^\infty \eps^j\chi_j(x, z). 
\]
with 
\bq\label{est:gammaj}
\| \chi_j\|_{H^{s+10}(\T\times \Rr_-)}\le M^{j+1}. 
\eq
We recall also that $\inf_{\T\times \Rr_-}\chi_0\ge c_0>0$ for any $\eps\in (-\iota, \iota)$, where $\iota\ge \eps_0(s)$. We shall prove that $\Tt(\eps, \beta)$ can be expanded into a convergent  series 
\bq\label{series:Tt}
\Tt(\eps, \beta)=\sum_{j=0}^\infty\sum_{\ell=0}^\infty \eps^j (\beta-\beta^0)^\ell \Tt_{j, \ell}\quad\text{in } H^{s+\mez}(\T\times \Rr_-),
\eq
provided 
\bq\label{small:epsdelta}
|\eps|<r\quad\text{and}\quad|\beta-\beta^0|<r,
\eq
where $r<\beta^0$ is sufficiently small.  Formally, the coefficients satisfy the following problems.   $\Tt_{0, 0}$ satisfies 
\bq
\begin{cases}
\Delta_{x, z}\Tt_{0, 0}-\beta^0\chi_0\Tt_{0, 0}=0,\\
\Tt_{0, 0}(\cdot, 0)=f(\cdot). 
\end{cases}
\eq
For $j\ge 1$ and $\ell=0$, $\Tt_{j, 0}$ satisfies 
\bq
\begin{cases}
\Delta_{x, z}\Tt_{j, 0}-\beta^0\chi_0\Tt_{j, 0}=\beta^0\sum_{k=0}^{j-1}\chi_{j-k}\Tt_{k, 0},\\
\Tt_{j, 0}(\cdot, 0)=0.
\end{cases}
\eq
Next, for $\ell \ge 1$, we find  $\Delta\Theta_{0,\ell} - \beta\chi_0\Theta_{0,\ell}=0$ 
and $\Theta_{0,\ell}(\cdot,\ell)=0$, so that $\Theta_{0,\ell}\equiv 0$.   
On the other hand, for $\ell\ge0$ and $j\ge 1$, $\Tt_{j, \ell}$ satisfies 
\bq
\begin{cases}
\Delta_{x, z}\Tt_{j, \ell}-\beta^0\chi_0\Tt_{j, \ell}=\sum_{k=0}^j\chi_{j-k}\Tt_{k, \ell-1}+\beta^0\sum_{k=0}^{j-1}\chi_{j-k}\Tt_{k, \ell},\\
\Tt_{j, \ell}(\cdot, 0)=0.
\end{cases}
\eq 
From these elliptic equations we obtain the following estimates.  
Obviously $\Tt_{0, 0}$ satisfies 
\bq\label{est:Tt00}
\| \Tt_{0, 0}\|_{H^{s+\mez}(\T\times \Rr_-)}\le M_0\| f\|_{H^s(\T)} 
\eq 
for some $M_0=M_0(\beta^0, s)$.  
By elliptic estimates and product rules for Sobolev norms, there exists $M_1=M_1(\beta^0, s)$ such that for all $j\ge 1$ we have
\bq\label{est:Tt:3}
\| \Tt_{j, 0}\|_{H^{s+\frac52}(\T\times \Rr_-)}\le M_1\beta^0 \sum_{k=0}^{j-1}\| \chi_{j-k}\|_{H^{s+10}(\T\times \Rr_-)} \| \Tt_{k, 0}\|_{H^{s+\mez}(\T\times \Rr_-)}
\eq
and for all $j\ge 1$ and $\ell\ge 1$ we have
\bq\label{est:Tt:4}
\begin{aligned}
\| \Tt_{j, \ell}\|_{H^{s+\frac52}(\T\times \Rr_-)}&\le M_1\sum_{k=0}^j\|\chi_{j-k}\|_{H^{s+10}(\T\times \Rr_-)}\|\Tt_{k, \ell-1}\|_{H^{s+\mez}(\T\times \Rr_-)}
\\
&\quad+\beta^0M_1\sum_{k=0}^{j-1}\|\chi_{j-k}\|_{H^{s+10}(\T\times \Rr_-)}\|\Tt_{k, \ell}\|_{_{H^{s+\mez}(\T\times \Rr_-)}}.
\end{aligned}
\eq
Let 
\bq\label{def:A}
A >1+M_1+ \beta^0M_1M.
\eq  
We claim that 
\bq\label{induction:Tt}
\| \Tt_{j, \ell}\|_{H^{s+\frac52}}\le \| \Tt_{0, 0}\|_{H^{s+\mez}(\T\times \Rr_-)}(A M)^{j+\ell}\quad\text{if}\quad j+\ell \ge 1.
\eq 
Temporarily taking \eqref{induction:Tt} for granted, we deduce using \eqref{est:Tt00} that the series \eqref{series:Tt} converges absolutely in $H^{s+\mez}(\T\times \Rr_-)$ provided $r<(AM)^{-1}$. We will prove the claim \eqref{induction:Tt} by induction on $\ell$ and $j$.

 We begin with $\ell=0$ and $j = 1$.    Combining 
 \eqref{est:Tt:3}, \eqref{est:Tt00},  \eqref{est:gammaj}, and \eqref{def:A}, we obtain 
\begin{align*}
\| \Tt_{1, 0}\|_{H^{s+\frac52}(\T\times \Rr_-)}&\le M_1\beta^0 \| \chi_1\|_{H^{s+10}(\T\times \Rr_-)} \| \Tt_{0, 0}\|_{H^{s+\mez}(\T\times \Rr_-)}\\
&\le \| \Tt_{0, 0}\|_{H^{s+\mez}(\T\times \Rr_-)} \beta^0M_1 M^2
\le  \| \Tt_{0, 0}\|_{H^{s+\mez}(\T\times \Rr_-)}(AM). 
\end{align*}
Thus \eqref{induction:Tt} holds for $j=1$.  
Now let $\ell=1$ and $j\ge1$ and assume  
by induction that \eqref{induction:Tt} is valid for $\ell=0$ and up to $j-1$, where $j\ge 2$. 
Then it follows from \eqref{est:Tt:3}, \eqref{est:gammaj}, and \eqref{def:A} that 
\begin{align*}
\| \Tt_{j, 0}\|_{H^{s+\frac52}(\T\times \Rr_-)}&\le M_1\beta^0 \sum_{k=0}^{j-1}M^{j-k+1} \| \Tt_{0, 0}\|_{H^{s+\mez}(\T\times \Rr_-)}(A M)^k\\
&= \| \Tt_{0, 0}\|_{H^{s+\mez}(\T\times \Rr_-)} (AM)^j \beta^0M_1M   \frac{1-A^{-j}}{A -1}\\
&\le  \| \Tt_{0, 0}\|_{H^{s+\mez}(\T\times \Rr_-)} (A M)^j.
\end{align*}
This proves \eqref{induction:Tt} for $\ell=0$ and $j\ge 1$. 

Next, we consider $\ell=1$. The case $ j=0,\ \ell=1$ is trivial since $\Tt_{0, 1}=0$.   
Let $\ell=1, \ j\ge1$ and 
assume by induction on $j$ that \eqref{induction:Tt} holds up to $j-1$.  In view of  \eqref{est:Tt:4},  \eqref{est:gammaj}, \eqref{def:A} and the case $\ell=0$, we have
\allowbreak
\begin{align*}
\| \Tt_{j, 1}\|_{H^{s+\frac52}(\T\times \Rr_-)}&\le M_1\sum_{k=0}^j\|\chi_{j-k}\|_{H^{s+10}(\T\times \Rr_-)}\|\Tt_{k, 0}\|_{H^{s+\mez}(\T\times \Rr_-)}
\\
&\quad+\beta^0M_1\sum_{k=0}^{j-1}\|\chi_{j-k}\|_{H^{s+10}(\T\times \Rr_-)}\|\Tt_{k, 1}\|_{_{H^{s+\mez}(\T\times \Rr_-)}}\\
&\le 
M_1\sum_{k=0}^j M^{j-k+1}(AM)^k\|\Tt_{0, 0}\|_{H^{s+\mez}(\T\times \Rr_-)}
\\
&\quad+\beta^0M_1\sum_{k=0}^{j-1}M^{j-k+1}(AM)^{k+1}\|\Tt_{0, 0}\|_{_{H^{s+\mez}(\T\times \Rr_-)}}   
\quad(\text{by induction})\\
&= 
M_1M^{j+1}\|\Tt_{0, 0}\|_{H^{s+\mez}(\T\times \Rr_-)}\frac{A^{j+1}-1}{A-1}+\beta^0M_1M^{j+2}\frac{A^{j+1}-A}{A-1}\\
&\le (AM)^{j+1}\|\Tt_{0, 0}\|_{H^{s+\mez}(\T\times \Rr_-)}(M_1+\beta^0M_1M)\frac{1}{A-1}\\
&\le (AM)^{j+1}\|\Tt_{0, 0}\|_{H^{s+\mez}(\T\times \Rr_-)}.
\end{align*}
This proves \eqref{induction:Tt} for $\ell=1$ and $j\ge 0$. 
The same reasoning allows one to prove by induction on $\ell$ that \eqref{induction:Tt} 
holds for all $\ell \ge 1$.
\\ \\  
{\bf 3c.} Now we make the connection to $\cG_{\eps,\beta}$, defined in \eqref{def:cG}.    
In case $s>1$,  the preceding analyticity of $\Tt(\eps,\beta)$ in $H^{s+\mez}(\T\times \Rr_-)$ 
and the standard continuity from  $H^{s-\mez}(\T\times \Rr_-)$ to $H^{s-1}(\T)$ 
of the trace onto $\{z=0\}$ 
implies that the mapping \eqref{mappimg:cG} is analytic. The trace theorem fails if $s=1$. Nevertheless, it follows from \eqref{induction:Tt}  that,  
omitting $\Tt_{0, 0}$, the series 
\[
\sum_{j\ge 0, \ell\ge 0, j+\ell \ge 1}\eps^j(\beta-\beta^0)^\ell \Tt_{j, \ell}
\]
actually converges absolutely in the stronger topology $H^{s+\frac52}(\T\times \Rr_-)$ to some limit $\Tt_\sharp$. Thus for all $s\ge \mez$ the continuity of the trace $H^{s+\frac32}(\T\times \Rr_-)\to H^{s+1}(\T)$ implies that the mapping 
\bq\label{map:Ttsharp}
(-\eps_0(s), \eps_0(s))\times (0, \infty)\ni (\eps,\beta) \mapsto \left[f\mapsto  \p_z\Tt_\sharp\vert_{z=0}\right]\in \cL(H^s(\T), H^{s+1}(\T))
\eq
is analytic. On the other hand, as proven in {\bf 1.} the mapping $f\mapsto  \p_z\Tt_{0, 0}\vert_{z=0}$ 
 belongs to $\cL(H^s(\T), H^{s-1}(\T))$ for all $s\ge \mez$.   
 Since $\cG_{\eps, \beta}f=\p_z\Tt_{0, 0}\vert_{z=0}+\p_z\Tt_\sharp\vert_{z=0}$ by definition \eqref{def:cG}, 
 it follows from \eqref{map:Ttsharp} that $\cG_{\eps, \beta}$ is analytic in $(\eps, \beta)$ with values in $\cL(H^s(\T), H^{s-1}(\T))$. This completes the proof of (ii).  
\end{proof} 

\subsection{The reformulated instability problem}
Theorem \ref{theo:flattenG} allows us to precisely formulate the instability problem as follows.  
Starting from the Riemann stretch \eqref{def:zeta}, 
it is convenient to first define the two auxiliary operators 
\bq
\zeta_\sharp f(x)=(f\circ \zeta)(x),\quad \zeta_*f(x)=\zeta'(x)\zeta_\sharp f(x).
\eq
Then we define the new unknowns 
\bq\label{def:w12}
w_1(x)=\zeta_*\wt{v_1},\quad w_2(x)=\zeta_\sharp \wt{v_2}.
\eq
We apply $\zeta_*$ to \eqref{eq:v1} and $\zeta_\sharp$ to \eqref{eq:v2}, and use Theorem \ref{theo:flattenG} (iii) to have 
\[
\zeta'(x)\big[ G(\eta^*)(\wt{v_2}e^{i\alpha y})\big]\vert_{(\zeta(x), y)}=e^{i\alpha y}\cG_{\eps, \beta}(\wt{v_2}\circ \zeta)(x),
\]
that is, $\zeta_*G(\eta^*)(v_2)(x, y)=e^{i\alpha y}\cG_{\eps, \beta}(w_2)(x)$. We  thus obtain the equivalent linearized system
\begin{align}\label{eq:w1}
&\p_t w_1=\p_x\big(p(x)w_1\big) 
+ \cG_{\eps, \beta} w_2,\\  \label{eq:w2}
&\p_t w_2=-\frac{g +q(x)}{\zeta'(x)}w_1+p(x)\p_xw_2,
\end{align}
 where  the variable coefficients 
\bq\label{def:pq}
p:=\frac{c_*-\zeta_\sharp V_*}{\zeta'},\quad q:=-p\p_x(\zeta_\sharp B_*)
\eq
depend only on $\eps$.  
By the change of variables 
\bq\label{rescale:g}
 (\psi, c)\to (\sqrt{g} \psi, \sqrt{g}c)
 \eq
 in the basic water wave system \eqref{sys:Stokes}, 
 we hereafter assume without loss of generality that $g=1$.
 
For the spectral analysis, we seek solutions of the linearized system \eqref{eq:w1}--\eqref{eq:w2} 
of the form $w_j(x, t)=e^{\ld t}u_j(x)$, thereby arriving at the  {\it eigenvalue problem}  
\bq\label{eiproblem}
\ld U=\cL_{\eps, \beta} U:= 
\begin{bmatrix} 
\p_x(p(x)\cdot)  & \cG_{\eps, \beta}\\
-\frac{1 +q(x)}{\zeta'(x)}& p(x)\p_x
\end{bmatrix}U,\quad U=
\begin{bmatrix}  u_1 \\ u_2 \end{bmatrix}.
\eq
By virtue of Theorem \ref{theo:flattenG} (i), we have $\cG_{\eps, \beta}\in \cL(H^1(\T), L^2(\T))$ is self-adjoint for $\eps\in (-\eps_0(1), \eps_0(1))$ and $\beta\in (0, \infty)$. Therefore, we regard $\cL_{\eps, \beta}$, defined above,  as a bounded linear operator from  $(H^1(\T))^2$ to $(L^2(\T))^2$.   Moreover, $\cL_{\eps, \beta}$ can be written in Hamiltonian form
 \bq\label{def:Hamiltonian}
\cL_{\eps, \beta}=J\cH_{\eps, \beta},\quad J=\begin{bmatrix} 0& 1\\ -1 & 0\end{bmatrix},\quad \cH_{\eps, \beta}=\begin{bmatrix} \frac{1 +q(x)}{\zeta'(x)} & -p(x)\p_x \\ \p_x(p(x)\cdot) & \cG_{\eps, \beta}\end{bmatrix},
 \eq
 where $\cH_{\eps, \beta}$ is self-adjoint. All entries of $\cH_{\eps, \beta}$ depend on $\eps$ but only the lower right corner depends on $\beta$. For the two-dimensional modulational instability,  $\beta=0$ and it was shown in \cite{NguyenStrauss} that $\cG_{\eps, 0}=|D|$ (see also Proposition \ref{prop:Rj} below).  However, when $\beta \ne 0$, $\cG_{\eps, \beta}$ is no longer a Fourier multiplier but a genuine pseudo-differential operator, which will be expanded up to $O(\eps^3)$ in Section \ref{Sec:R}. 
  \begin{defi}
 The Stokes wave $(\eta^*, \psi^*, c^*)$ is said to be unstable with respect to  transverse perturbations if there exists  a transverse wave number $\alpha$ such that $\cL_{\eps, \beta}$  has an eigenvalue with positive real part. 
 \end{defi}
 
 The expansions in powers of $\eps$ for the Riemann stretch $\zeta$ and the coefficients in \eqref{eq:w1}--\eqref{eq:w2} are given in the next proposition.  
\begin{prop}\label{prop:expandpq}
For any $s>\frac52$, the following functions are analytic in $\eps$ with values in $H^s(\T)$ provided $\eps$ is small enough. They have the expansions:
\begin{align}\label{zeta1}
&\zeta(x)=x+\eps \sin x+\eps^2 \sin(2x)+O(\eps^3),\\
 \label{expand:p}
&p(x)=1 -2\eps\cos x+\eps^2\big(\tfrac32-2\cos(2x)\big)+\eps^3\big(3\cos x-3\cos(3x)\big)+O(\eps^4),\\ \label{expand:q}
&q(x)=-\eps\cos x+\eps^2\big(1-\cos(2x)\big)+\eps^3\big(2\cos x-\tfrac32 \cos(3x)\big)+O(\eps^4),\\ \label{expand:qzeta}
&\frac{1+q(x)}{\zeta'}=1-2\eps \cos x+2\eps^2\big(1-\cos(2x)\big)+\eps^3\big(4\cos x-3\cos(3x)\big)+O(\eps^4).
\end{align}
\end{prop}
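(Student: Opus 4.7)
\emph{Proof plan.}  The analyticity of all four quantities reduces to closure properties of $\eps$-analytic Sobolev-valued functions.  By Theorem~\ref{theo:Stokes}, the Stokes data $(\eta^*,\psi^*,c_1)$ are analytic in $\eps$ with values in arbitrarily smooth Sobolev spaces, provided $\eps$ is small, and by Remark~\ref{rema:XZzeta} the Riemann stretch $\zeta$ (hence $\zeta'$) is too.  Since $H^s(\T)$ is a Banach algebra for $s>1/2$, composition with $\zeta$ (a near-identity diffeomorphism for small $\eps$) preserves Sobolev regularity, and $1/\zeta'$ is analytic because $\zeta'$ stays uniformly bounded away from zero for small $\eps$.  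Combining these with the definitions \eqref{def:B*V*} of $B^*,V^*$ and \eqref{def:pq} of $p,q$ yields the analyticity claims for all four functions in $H^s(\T)$ for any $s>5/2$.

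For the expansion \eqref{zeta1} of $\zeta$, I would iterate the fixed-point equation from Remark~\ref{rema:XZzeta},
\bq
\zeta(x)=x-\tfrac{i}{2\pi}\sum_{k\neq 0}e^{ikx}\sign(k)\wh{\eta^*\circ\zeta}(k),
\eq
inserting the Stokes expansion \eqref{expand:star} for $\eta^*$ together with the previous iterate of $\zeta$.  At each order one Taylor-expands $\eta^*\circ\zeta$ about $x$ using $\zeta(x)-x=O(\eps)$, computes the Fourier coefficients of the resulting products of cosines in closed form, and observes that the $e^{\pm ikx}$ pairs collapse to $\sin(kx)$; this produces $\eps\sin x$ at leading order and $\eps^2\sin(2x)$ at next order.

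The key observation for the expansions \eqref{expand:p}--\eqref{expand:qzeta} is to use the first Stokes equation $F_1(\eta^*,\psi^*,c^*)=0$ to eliminate the Dirichlet--Neumann operator from \eqref{def:B*V*}:
\bq
G(\eta^*)\psi^*=-c_1\p_x\eta^*,
\eq
so that
\bq
B^*=\frac{-c_1\p_x\eta^*+\p_x\psi^*\p_x\eta^*}{1+(\p_x\eta^*)^2},\qquad V^*=\p_x\psi^*-B^*\p_x\eta^*
\eq
become explicit trigonometric polynomial expressions in $\eps$ via \eqref{expand:star}, bypassing any direct computation of $G(\eta^*)$.  I would then compose with $\zeta$ by Taylor expansion (again exploiting $\zeta(x)-x=O(\eps)$), divide by $\zeta'$ through a geometric series expansion of $1/\zeta'$, and read off \eqref{expand:p}.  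The formula \eqref{expand:q} follows from $q=-p\,\p_x(\zeta_\sharp B^*)$, and \eqref{expand:qzeta} from multiplying the expansion of $1+q$ by the expansion of $1/\zeta'$; the internal consistency $1+q=\zeta'\cdot[(1+q)/\zeta']$ provides a useful cross-check.

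The only real obstacle is bookkeeping: multiplying several $\eps$-series of trigonometric polynomials to the required order ($\eps^3$ or $\eps^4$) and reducing each product to the canonical $\cos(kx)$ basis via product-to-sum identities.  This is mechanical once the Stokes reduction of $G(\eta^*)\psi^*$ is used, and is a natural task for the companion Mathematica file referenced in the paper.
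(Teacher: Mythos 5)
Your proposal is correct, and for the expansions it takes a genuinely different route from the paper. The paper (Appendix \ref{appendix:pq}) expands the Dirichlet--Neumann operator itself via shape derivatives, computing $G(0)=|D|$, $G'(0)\ol\eta\psi=-|D|(\ol\eta|D|\psi)-\p_x(\ol\eta\p_x\psi)$ and $G''(0)$, and from these builds $G(\eta^*)\psi^*$, $B^*$, $V^*$ order by order; for analyticity it invokes the analyticity of $\eta\mapsto G(\eta)$ from \cite{Berti-DN}. You instead use the exact steady identity $F_1=0$, i.e.\ $G(\eta^*)\psi^*=-c_1\p_x\eta^*$, which turns $B^*=\frac{(\p_x\psi^*-c_1)\p_x\eta^*}{1+(\p_x\eta^*)^2}$ and $V^*=\p_x\psi^*-B^*\p_x\eta^*$ into explicit rational expressions in the Stokes data, bypassing the $G'(0)$, $G''(0)$ calculus entirely; I checked that this reproduces the paper's intermediate expansions $B^*=\eps\sin x+\mez\eps^2\sin 2x+\eps^3(\tfrac38\sin 3x-\tfrac18\sin x)+O(\eps^4)$ and $V^*=\eps\cos x+\tfrac{\eps^2}{2}(1+\cos 2x)+\tfrac{\eps^3}{8}(5\cos x+3\cos 3x)+O(\eps^4)$, and as a by-product it makes the analyticity of $B^*,V^*$ immediate from that of $(\eta^*,\psi^*,c_1)$ without the Dirichlet--Neumann analyticity theorem. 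What your route gives up is only that it relies on the steady equation, so it yields no expansion of $G(\eta)\psi$ for general arguments (which the paper in any case only needs here through $B^*,V^*$). Two small points to make explicit when writing this up: the composition and the division by $\zeta'$ require the joint $\eps$-analyticity of $f(\cdot;\eps)\circ\zeta(\cdot;\eps)$ and the uniform lower bound on $\zeta'$, exactly as in the paper; and to obtain \eqref{expand:p}--\eqref{expand:qzeta} to the stated $O(\eps^4)$ accuracy you must iterate the fixed-point equation for $\zeta$ one order beyond what \eqref{zeta1} displays, namely $\zeta^3(x)=-\sin x+\tdm\sin(3x)$, since $1/\zeta'$ enters $p$ and $(1+q)/\zeta'$ at third order -- your iterative scheme produces this, but it should be carried out.
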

The proof of Proposition  
\ref{prop:expandpq} is given in Appendix \ref{appendix:pq}.  
Up to order $O(\eps^3)$ the same expansions were already obtained in \cite{NguyenStrauss}.
Combining Theorem \ref{theo:flattenG} and Proposition \ref{prop:expandpq}, 
we conclude that the mapping
\bq\label{analyticity:cL}
(-\eps_0(1), \eps_0(1))\times (0, \infty)\ni (\eps, \beta)\mapsto \cL_{\eps, \beta} \in \cL\big((H^1(\T))^2, (L^2(T))^2\big)
\eq
is analytic.

\section{Resonance condition and Kato's perturbed basis} \label{Sec:Kato} 
\subsection{Resonance condition}
We begin with the basic case $\eps=0$.  Using the expansions \eqref{expand:p} and \eqref{expand:q} for $p$ and $q$ and the formula \eqref{form:G0} for $\cG(0, \beta)$,  we find in view of \eqref{eiproblem} that
\bq
\cL_{0, \beta}=\begin{bmatrix} 
\p_x  & (|D|^2+\beta)^\mez\\
-1& \p_x
 \end{bmatrix}.
\eq
 The {\it spectrum of} $\cL_{0,\beta}$ consists of the purely imaginary eigenvalues 
\bq\label{ld0pm}
\ld^0_\pm(k, \beta)=i[k\pm (k^2+\beta)^\frac14],\quad k\in \Zz,
\eq
which are the roots of the quadratic characteristic polynomial 
\bq
\Delta_0(\ld; k, \beta)=(\ld-ik)^2+(k^2+\beta)^\mez=[\ld-\ld^0_+(k, \beta)][\ld-\ld^0_-(k, \beta)].
\eq
We learn from \cite{MMMSY} that the resonance condition is a double eigenvalue 
$\ld^0_+(-(m+1), \beta)=\ld^0_-(m, \beta)$.  That is, 
\bq
-(m+1)+((m+1)^2+\beta)^\frac14=m-(m^2+\beta)^\frac14.
\eq
For our purposes, we choose the {\it simplest resonance} $m=1$. We define $\beta_*\approx 2.7275211479$ to be the unique positive solution of 
\bq\label{resonancecondition}
\ld^0_+(-2, \beta_*)=\ld^0_-(1, \beta_*)\iff -2+(\beta_*+4)^\frac14=1-(\beta_*+1)^\frac14. 
\eq
Given $\beta_*$, we also define $\sigma$ as
\bq\label{def:sigma}
i\sigma=\ld^0_+(-2, \beta_*)=\ld^0_-(1, \beta_*).
\eq
We have 
\bq\label{dkld0}
\frac{d}{dk}\left(-i\ld^0_\pm(k, \beta )\right)\ge \mez\quad\forall |k|\ge 1,
\eq
so the functions $\mathbb{Z}\ni k \mapsto -i\ld^0_\pm(k, \beta)$ are strictly increasing  on $(-\infty, -1]$ and $[1, \infty)$. In addition, it can be readily checked that 
\begin{align}\label{ineq:ldp}
 &-i\ld^0_+(k, \beta)(-1)< -i\ld^0_+(0, \beta)< -i\ld^0_+(1, \beta),\\ \label{ineq:ldm}
 &-i\ld^0_-(-1, \beta)(-1)< -i\ld^0_-(1, \beta)< -i\ld^0_-(0, \beta).
\end{align}
Combining \eqref{dkld0}, \eqref{ineq:ldp} and \eqref{ineq:ldm}, we deduce that
\[
\begin{cases}
\ld^0_+(k, \beta_*)=\ld^0_+(-2, \beta_*)\iff k=-2,\\
\ld^0_-(k, \beta_*)=\ld^0_+(1, \beta_*)\iff k=1.
\end{cases}
\]
 Consequently
\bq\label{Delta0}
\Delta_0(i\sigma; k, \beta _*)=0\iff k\in \{1, -2\},
\eq
and thus $i\sigma$ is a {\it double eigenvalue} of $\cL_{0,\beta_*}$. Moreover, it follows from \eqref{dkld0}, \eqref{ineq:ldp}, and  \eqref{ineq:ldm} that $i\sigma$ is {\it isolated} in the spectrum of $\cL_{0, \beta_*}$. The  eigenspace corresponding to  $i\sigma$  is 
\bq
\cU:=N(\cL_{0,\beta_*}-i\sigma\I)=\text{span}\{U_1, U_2\},
\eq
where $\I$ denotes the identity operator and 
\bq      \label{def:U}
U_1=\begin{bmatrix} i(\beta_* +1)^\frac14 e^{ix}\\ e^{ix} \end{bmatrix},\quad U_2=\begin{bmatrix} -i(\beta_* +4)^\frac14 e^{-2ix}\\ e^{-2ix} \end{bmatrix}.
\eq
For ease of notation, we denote 
  \bq\label{def:gammaj}
 \g_j=\Om(j)^\mez=(\beta_* +j^2)^\frac14,
  \eq
  so that 
  \bq\label{sigma:gamma12}
  \sigma=1-\g_1=-2+\g_2.
  \eq 
  With the aid of Mathematica, we can obtain the following numerical values of the various constants above:
\begin{center}
\begin{tabular}{c || r} 
 \hline \hline
 $\beta_*$ & 2.7275211479 \\ 
 \hline
 $\sigma$ & -0.3894887313  \\ 
 \hline
 $\gamma_1$ & 1.3894887313 \\ \hline
 $\gamma_2$ & 1.6105112687 \\
 \hline
 \hline
\end{tabular}
\end{center}

\subsection{Kato's  perturbed basis} 
In order to prove the transverse instability, we will prove that the isolated double eigenvalue $i\sigma$ of $\cL_{0, \beta_*}$ leaves the imaginary axis as  $\cL_{0, \beta_*}$ is perturbed  to  $\cL_{\eps, \beta_*+\delta}$ for well-chosen $(\eps, \delta)$ near $(0, 0)$. This will in turn be achieved by tracking how the eigenspace $\{U_1, U_2\}$ is perturbed accordingly. To that end, following Berti {\it et al} \cite{Berti1}, we use Kato's similarity transformation \cite{Kato} 
 \bq \label{def:Kato}
\cK_{\eps, \delta} := \{1-(P_{\eps, \delta}-P_{0, 0})^2\}^{-\mez} \{P_{\eps, \delta} P_{0, 0} + (1-P_{\eps, \delta})(1-P_{0, 0})\}.   \eq
 Here, $P_{\eps, \delta}:  (L^2(\T))^2\to (H^1(\T))^2$ is the spectral projection 
 \bq\label{def:Peps}
P_{\eps, \delta} = -\frac1{2\pi i} \int_\Gamma (\cL_{\eps, \beta_*+\delta} - \lb)^{-1} d\lb,
  \eq 
where  $\Gamma$ is a closed circle of sufficiently small radius around the isolated double eigenvalue $i\sigma$  such that $\Gamma\subset \rho(\cL_{\eps, \delta})$ for $(\eps, \delta)$ close to $(0, 0)$.  The analyticity \eqref{analyticity:cL} of $\cL_{\eps, \beta}$ implies that $P_{\eps, \delta}$ and $\cK_{\eps, \delta}$ are analytic in $(\eps, \delta)$ near $(0, 0)$. Set 
\bq
\cV_{\eps, \delta}:=\text{Range}(P_{\eps, \delta}).
\eq
We will need the following properties:
\begin{align}\label{prop1}
&\cL_{\eps, \beta_*+\delta}: \cV_{\eps, \delta}\to \cV_{\eps, \delta},\\ \label{prop2}
&\sigma(\cL_{\eps, \beta_*+\delta})\cap \{z\in \mathbb{C}~\text{inside~}\Gamma\}=\sigma(\cL_{\eps, \beta_*+\delta}\vert_{\cV_{\eps, \delta}}),\\ \label{prop3}
&\cV_{\eps, \delta}=\cK_{\eps, \delta}\cV_{0, 0},
\end{align}
whose proof can be found in   Lemma 3.1 in \cite{Berti1}. 
\\
Since $\cV_{0, 0}=\text{span}\{U_1, U_2\}$,  \eqref{prop3} implies
\bq
\cV_{\eps, \delta}=\text{span}\{U^{\eps,\delta}_1, U^{\eps,\delta}_2\},
\eq
where 
\bq\label{Katobasis}
U^{\eps, \delta}_m=\cK_{\eps, \delta} U_m \quad (m=1,2).
\eq
We note that $U^{\eps, \delta}_m$ is analytic in $(\eps, \delta)$ near $(0, 0)$ because $\cK_{\eps, \delta}$ is so. The property \eqref{prop1}  allows us to reduce the spectral analysis of $\cL_{\eps, \beta_*+\delta}$ to that of a $2\times 2$ matrix. To write down the matrix, we  first compute the inner products 
\bq\label{JUU:0}
(JU_1, U_2)=(JU_2, U_1)=0,\quad (JU_1, U_1)=-i4\pi \g_1,\quad (JU_2, U_2)=i4\pi \g_2.
\eq
By Lemma 3.2 in \cite{Berti1}, $\cK_{\eps, \delta}$ is symplectic, \emph{i.e.} $\cK_{\eps, \delta}^*J\cK_{\eps, \delta}=J$. It follows that $(J\cK_{\eps, \delta}U, \cK_{\eps, \delta} V)=(JU, V)$ and hence \eqref{JUU:0} yields
\bq\label{JUU}
(JU_1^{\eps, \delta}, U_2^{\eps, \delta})=(JU_2^{\eps, \delta}, U_1^{\eps, \delta})=0,\quad (JU_1^{\eps, \delta}, U_1^{\eps, \delta})=-i4\pi \g_1,\quad (JU_2^{\eps, \delta}, U_2^{\eps, \delta})=i4\pi \g_2.
\eq
\begin{lemm}
Using our notation  $\beta=\beta_*+\delta$, the {$2\times 2$ matrix}  
that represents the linear operator $\cL_{\eps,  \beta}=J\cH_{\eps,  \beta}:\cV_{\eps, \delta}\to \cV_{\eps, \delta}$ (by \eqref{prop1}) with respect to the basis $\{U_1^{\eps, \delta}, U_2^{\eps, \delta}\}$ is 
\bq\label{Meps}
 \begin{bmatrix} -\frac{i}{4\pi \g_1}(\cH_{\eps, \beta} U_1^{\eps, \delta}, U_1^{\eps, \delta}) &  \frac{i}{4\pi \g_2}(\cH_{\eps, \beta} U_1^{\eps, \delta}, U_2^{\eps, \delta})   \\
- \frac{i}{4\pi \g_1}(\cH_{\eps, \beta} U_2^{\eps, \delta}, U_1^{\eps, \delta}) & \frac{i}{4\pi \g_2}(\cH_{\eps, \beta} U_2^{\eps, \delta}, U_2^{\eps, \delta}) 
\end{bmatrix}. 
\eq 
\end{lemm}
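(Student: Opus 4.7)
The plan is to compute the four entries by expanding $\cL_{\eps,\beta_*+\delta}U_i^{\eps,\delta}$ in the Kato basis and then isolating each coefficient through the non-degenerate $J$-pairing recorded in \eqref{JUU}. By the invariance property \eqref{prop1}, for each $i\in\{1,2\}$ there are uniquely determined scalars $M_{ij}^{\eps,\delta}\in\Cc$ such that
\begin{equation}\label{eq:Lexpand-proposal}
\cL_{\eps,\beta_*+\delta}U_i^{\eps,\delta} = M_{i1}^{\eps,\delta}\,U_1^{\eps,\delta}+M_{i2}^{\eps,\delta}\,U_2^{\eps,\delta}.
\end{equation}
The matrix asserted in \eqref{Meps} is then, by convention, the array whose $(i,j)$ entry equals $M_{ij}^{\eps,\delta}$; since passing to the transpose does not alter the spectrum, the subsequent analysis is insensitive to this convention.

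To recover the coefficients, I would apply the linear functional $(J\,\cdot\,,U_j^{\eps,\delta})$ to both sides of \eqref{eq:Lexpand-proposal}. The Hamiltonian factorization $\cL_{\eps,\beta}=J\cH_{\eps,\beta}$ from \eqref{def:Hamiltonian}, combined with the elementary identity $J^2=-\I$, rewrites the left-hand side as $-(\cH_{\eps,\beta_*+\delta}U_i^{\eps,\delta},U_j^{\eps,\delta})$. Simultaneously, the orthogonality identities \eqref{JUU} collapse the right-hand side to a single surviving term: the off-diagonal $J$-pairings vanish, leaving the coefficient $-i4\pi\gamma_1$ in the $j=1$ equation and $i4\pi\gamma_2$ in the $j=2$ equation. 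Dividing through produces
\begin{equation}
M_{i1}^{\eps,\delta}=-\frac{i}{4\pi\gamma_1}\bigl(\cH_{\eps,\beta_*+\delta}U_i^{\eps,\delta},U_1^{\eps,\delta}\bigr),\qquad M_{i2}^{\eps,\delta}=\frac{i}{4\pi\gamma_2}\bigl(\cH_{\eps,\beta_*+\delta}U_i^{\eps,\delta},U_2^{\eps,\delta}\bigr)
\end{equation}
for $i=1,2$, which reassembles into exactly the matrix in \eqref{Meps}.

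There is no substantive obstacle here; the argument is purely linear-algebraic and uses only three ingredients: the invariance of $\cV_{\eps,\delta}$ under $\cL_{\eps,\beta_*+\delta}$ given by \eqref{prop1}, the Hamiltonian form $\cL_{\eps,\beta}=J\cH_{\eps,\beta}$, and the non-degeneracy of the $J$-pairing on $\cV_{\eps,\delta}$ expressed by the block-diagonal Gram matrix in \eqref{JUU}. The one place one might slip is the simplification $J\cL_{\eps,\beta}=J\cdot J\cH_{\eps,\beta}=-\cH_{\eps,\beta}$, which uses $J^2=-\I$ but \emph{not} the self-adjointness of $\cH_{\eps,\beta}$; the latter will only be invoked later, when one exploits the resulting Hermitian symmetry of the off-diagonal entries of \eqref{Meps} in the perturbative expansions of Sections 4 and 5.
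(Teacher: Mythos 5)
Your proposal is correct and follows essentially the same route as the paper: it uses the invariance \eqref{prop1}, pairs the expansion of $\cL_{\eps,\beta_*+\delta}U_i^{\eps,\delta}$ against $(J\,\cdot\,,U_j^{\eps,\delta})$ using the symplectic orthogonality \eqref{JUU}, and simplifies via $J\cL_{\eps,\beta}=J^2\cH_{\eps,\beta}=-\cH_{\eps,\beta}$, exactly as in the paper's proof (which phrases the same computation as a reconstruction formula for a general $U\in\cV_{\eps,\delta}$ and then substitutes $U=\cL_{\eps,\beta}U_j^{\eps,\delta}$). The signs and the resulting entries match \eqref{Meps}, and your remark about the row/column convention being immaterial for the spectrum is consistent with how the paper uses the matrix.
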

\begin{proof}
 For any $U\in \cV_{\eps, \delta}$, we write $U=a_1U^{\eps, \delta}_1+a_2U_2^{\eps, \delta}$. Then 
\[
(JU, U_k^{\eps, \delta})=a_k(JU_k^{\eps, \delta}, U_k^{\eps, \delta})=i(-1)^ka_k4\pi \g_k  \quad\text{ for } k=1,2 
\]
in view of  \eqref{JUU}. Consequently, 
\[
U=-i\sum_{k=1}^2 \frac{(-1)^k}{4\pi \g_k}(JU, U_k^{\eps, \delta})U^{\eps, \delta}_k.
\]
For any $U\in \cV_{\eps, \delta}=R(P_{\eps, \delta})$, we have $\cL_{\eps, \beta} U\in \cV_{\eps, \delta}$ since $\cL_{\eps, \beta} P_{\eps, \delta}=P_{\eps, \delta} \cL_{\eps, \beta}$. Substituting $U = \cL_{\eps, \beta} U^{\eps, \delta}_j$, we have 
\[
\cL_{\eps, \beta} U^{\eps, \delta}_j=-i\sum_{k=1}^2 \frac{(-1)^k}{4\pi \g_k}(J\cL_{\eps, \delta} U_j^\eps, U_k^{\eps, \delta})U^{\eps, \delta}_k=i\sum_{k=1}^2 \frac{(-1)^k}{4\pi \g_k}(\cH_\eps U_j^{\eps, \delta}, U_k^{\eps, \delta})U^{\eps, \delta}_k.
\]
This concludes the proof of \eqref{Meps}.
\end{proof}

We normalize the unperturbed and perturbed basis vectors  for later convenience, yielding
\bq\label{UtoV}
V_m=\frac{1}{\sqrt{\gamma_m}}U_m,\quad V^{\eps, \delta}_m=\frac{1}{\sqrt{\gamma_m}}U^{\eps, \delta}_m,
\eq
respectively. With respect to this normalized basis, the $2\times 2$ matrix  that represents the Hamiltonian operator 
$\cL_{\eps, \beta_*+\delta}=J\cH_{\eps, \beta_*+\delta}:\cV_{\eps, \delta}\to \cV_{\eps, \delta}$ becomes
\bq\label{matrixL}
\textrm{L}_{\eps,\delta}=\begin{bmatrix} -\frac{i}{4\pi }(\cH_{\eps, \beta_*+\delta} V_1^{\eps, \delta}, V_1^{\eps, \delta}) &  \frac{i}{4\pi }(\cH_{\eps, \beta_*+\delta} V_1^{\eps, \delta}, V_2^{\eps, \delta})   \\
- \frac{i}{4\pi}(\cH_{\eps, \beta_*+\delta} V_2^{\eps, \delta}, V_1^{\eps, \delta}) & \frac{i}{4\pi }(\cH_{\eps, \beta_*+\delta} V_2^{\eps, \delta}, V_2^{\eps, \delta}) 
\end{bmatrix}. 
\eq 
We also define the {\it reversal operator} as  
\bq 
R \begin{bmatrix} v_1(x) \\ v_2(x) \end{bmatrix} 
=  \begin{bmatrix} -\bar v_1(-x) \\ \bar v_2(-x) \end{bmatrix}   . \eq 
Clearly $(Rv,Rw) = \overline{(v,w)}$.  
In addition, 
\bq    \label{reverse H}
\cH_{\eps,\beta} R = R \cH_{\eps,\beta}.  
\eq 
Indeed, we recall from \eqref{cG:even} that
that $(\cG(\bar f(-\cdot)))(x) = \overline{(\cG f)(-x)}$ and from \cite{NguyenStrauss} that the functions $p, q, \zeta'$ are real and even.  
Therefore, acting on the vector $\begin{bmatrix} v_1 \\  v_2 \end{bmatrix}$, 
a simple calculation shows that both sides of \eqref{reverse H} are equal to 
\[
\begin{bmatrix}  
-\frac{1+q}{\zeta'}\bar v_1(-x) +  p(\p_x\bar v_2)(-x)   \\  
	 (\p_x (p\bar v_1))(-x) + (\cG \bar v_2)(-x)   \end{bmatrix}  . 
	\]
Since $RJ=-JR$, we also have $R\cL_{\eps, \beta} = - \cL_{\eps, \beta} R$.  According to \cite{Berti1}, the operator $\cL_{\eps, \beta}$ is termed  {\it reversible}. 
We also verify directly that $RV_j=V_j$ for $j=1,2$.  
Also $R\cK_{\eps, \beta} = \cK_{\eps, \beta} R$ by  Lemma 3.2 (i) in \cite{Berti1}, so that $RV_j^{\eps, \delta}=V_j^{\eps, \delta}$ for $j=1,2$,  in view of \eqref{Katobasis} and \eqref{UtoV}. It follows that 
\bq 
(\cH_{\eps, \beta_*+\delta} V_j^{\eps, \delta},V_k^\eps) = \overline{ (R\cH_{\eps, \beta_*+\delta}V_j^{\eps, \delta} , RV_k^{\eps, \delta})} = \overline{(\cH_{\eps, \beta_*+\delta} V_j^{\eps, \delta},V_k^{\eps, \delta})}  
\quad
 \text{ is real for } j,k=1,2.  \eq
 Thus the entries of $\textrm{L}_{\eps, \delta}$ are purely imaginary and 
\bq\label{L:reversediagonal}
(\textrm{L}_{\eps, \delta})_{12}=-(\textrm{L}_{\eps, \delta})_{21}.
\eq

\section{Expansions of $\cL_{\eps,\beta}$ up to third order}\label{Sec:R}
 We  recall the Hamiltonian operator \eqref{def:Hamiltonian}
 \[
 \cH_{\eps, \beta}=\begin{bmatrix} \frac{1 +q(x)}{\zeta'(x)} & -p(x)\p_x \\ \p_x(p(x)\cdot) & \cG_{\eps, \beta}\end{bmatrix}.
 \]
 By Proposition \ref{prop:expandpq}, the variable coefficients $p(x)$ and $\frac{1 +q(x)}{\zeta'(x)}$ are analytic in $\eps$. By Theorem \ref{theo:flattenG}, $\cG_{\eps, \beta}$ is analytic in $(\eps, \beta)\in (-\eps_0(1), \eps_0(1))\times (0, \infty)$. 
 In particular, for fixed $\beta>0$ an expansion 
 \bq\label{expandcG}
 \cG_{\eps, \beta}=\sum_{j=0}^\infty \eps^j R_j(\beta)
 \eq
 is valid for $|\eps|<\eps_0(1)$.  
 Using \eqref{expandcG}  in conjunction with the expansions in Proposition  \ref{prop:expandpq}, we find the first few terms of the expansion to be 
 \bq    \label{expandHepsbeta}
\cH_{\eps, \beta}=\sum_{j=0}^3\eps^j\cH^{j}+O(\eps^4),
\eq
			where 
\bq
\begin{aligned} \label{Hexpansion:eps}
&\cH^{0}=\begin{bmatrix} 
1 &-\p_x \\ \p_x & R_0
\end{bmatrix}, \quad R_0=\Om(D):=(|D|^2+\beta)^\mez,\\
&\cH^{1}=\begin{bmatrix} 
-2 \cos x& 2 \cos x\p_x\\ -2\p_x(\cos x\cdot)  &   R_1
\end{bmatrix},\\
& \cH^{2}=\begin{bmatrix} 
2(1-\cos(2x))& -[\tdm-2\cos(2x)]\p_x\\ \p_x\{[\tdm-2\cos(2x)]\cdot)\} & R_2
\end{bmatrix},\\
&\cH^{3}=\begin{bmatrix}
-\big(-4\cos x+3\cos(3x)\big)& -\big(3\cos x-3\cos(3x)\big)\p_x\\ \p_x\left\{\big[3\cos x-3\cos(3x)\big]\cdot\right\} & R_3 
\end{bmatrix}.
\end{aligned}
\eq
\begin{rema}
 { Acting on the basis \eqref{def:U}, we have 
  $\cH^{0} U_1=-i\sigma \begin{bmatrix}1\\ -i\g_1 \end{bmatrix} e^{ix}$ 
  and $\cH^{0} U_2=-i\sigma \begin{bmatrix}1\\ i\g_2 \end{bmatrix} e^{-2ix}$. } 
\end{rema}
 In the following proposition, we show that $R_1,R_2,$ and $R_3$ are Fourier multipliers, and we explicitly compute their coefficients.
\begin{prop}\label{prop:Rj}
The Fourier multipliers $R_j$ for $1 \leq j \leq 3$ take the form
\begin{align}\label{form:R1}
&\wh{R_1f}(k)={C_k^-}\wh{f}(k-1)+{C^+_k}\wh{f}(k+1),\\\label{form:R2}
&\wh{R_2f}(k)=B^-_k\wh{f}(k-2)+B^0_k\wh{f}(k)+B^+_k\wh{f}(k+2),\\ \label{form:R3}
&\wh{R_3f}(k)= D^{-3}_k\wh{f}(k-3)+ D^{-1}_k\wh{f}(k-1) + D^{1}_k\wh{f}(k+1)+D^{3}_k\wh{f}(k+3), 
\end{align}
where the coefficients $C_k$, $B_k$, and $D_k$ are explicit functions of $\beta$ and $k$ to be derived below. In case $\beta=0$,  $R_j\equiv 0$ for all $j\ge 1$.
\end{prop}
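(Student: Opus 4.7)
The plan is to expand the solution $\Theta$ of \eqref{system:Theta} as a power series in $\eps$ with $\beta$ held fixed and read off $R_j f = \p_z \Theta_j|_{z=0}$. Following part 3b of the proof of Theorem~\ref{theo:flattenG}, write $\cJ(\eps) = \sum_j \eps^j \chi_j$ and $\Theta = \sum_j \eps^j \Theta_j$, where each $\Theta_j$ (for $j\geq 1$) solves an inhomogeneous elliptic equation with zero Dirichlet data and forcing $\beta \sum_{k<j} \chi_{j-k} \Theta_k$. The second assertion of the proposition, that $R_j \equiv 0$ for $j\geq 1$ when $\beta=0$, is then immediate: the $\cJ$-coupling drops out, the problem collapses to $\Delta_{x,z}\Theta=0$ with $\Theta|_{z=0}=f$, whose unique decaying solution $\Theta(x,z) = \sum_k \wh f(k)\,e^{ikx+|k|z}$ is independent of $\eps$, so that $\cG_{\eps,0}=|D|=R_0$ for all $\eps$.

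For general $\beta>0$, I would first pin down the Fourier structure of the $\chi_j$. From the Stokes expansion \eqref{expand:star} together with Proposition~\ref{prop:Riemann} and Remark~\ref{rema:XZzeta}, one expands $X(x,z)$ and $Z(x,z)$ up to order $\eps^3$ as finite Fourier sums in $x$ (modes $e^{i\ell x + |\ell|z}$), then assembles $\cJ = |\p_x X|^2 + |\p_z X|^2$ order by order. Using that $X$ is odd and $Z$ even in $x$ and that $\eta^*$ is even, a direct inspection shows that $\chi_j(x,z)$ is a linear combination of terms $e^{i\ell x + |\ell'|z}$ with $\ell\in\{-j,-j+2,\ldots,j\}$ and $|\ell'|\leq j$. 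This parity/support structure propagates to $\Theta_j$ by induction on $j$: if $f=e^{imx}$, then $\Theta_j(x,z) = \sum_\ell c^{(j)}_{m,\ell}(z)\,e^{i(m+\ell)x}$ with $\ell$ ranging over the same index set. At orders $j=1,2,3$ the shift sets are $\{\pm 1\}$, $\{0,\pm 2\}$, $\{\pm 1,\pm 3\}$, which is precisely the multiplier structure claimed in \eqref{form:R1}--\eqref{form:R3}.

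It then remains to compute the coefficients $C_k^\pm$, $B_k^{\pm,0}$, $D_k^{\pm 1,\pm 3}$ explicitly. At each fixed Fourier mode $k=m+\ell$, the function $c^{(j)}_{m,\ell}(z)$ satisfies the scalar ODE $(\p_z^2 - \Om(k)^2)\,c = \text{source}(z)$, $c(0)=0$, with $c$ decaying as $z\to-\infty$. The source is a finite sum of exponentials $e^{\mu z}$ with $\mu\geq 0$ inherited from the earlier $\Theta_{<j}$ and the $\chi_{\leq j}$, so variation of parameters (or, equivalently, the ansatz $c=Ae^{\mu z}+Be^{\Om(k)z}$) gives $c$ in closed form; evaluating $\p_z c(0)$ yields the desired coefficient as an explicit rational function of $\Om(k),\Om(k\pm 1),\ldots$ and hence of $\beta$ and $k$. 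The main obstacle is the combinatorial bookkeeping at order three, where the interactions $\chi_1\Theta_2$, $\chi_2\Theta_1$, $\chi_3\Theta_0$ generate many Fourier modes that must be combined consistently; this is lengthy but entirely mechanical, and is the sort of computation the authors indicate they offload to the companion Mathematica file.
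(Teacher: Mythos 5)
Your proposal is correct and follows essentially the same route as the paper: expand the Jacobian and the solution $\Theta$ of \eqref{system:Theta} in powers of $\eps$ at fixed $\beta$, solve the resulting mode-by-mode ODEs with zero Dirichlet data and decay by variation of parameters, and read off the coefficients from $\p_z\Theta^j(\cdot,0)$, with the $\beta=0$ case following immediately since the equation reduces to $\Delta\Theta=0$ independently of $\eps$. The parity/shift bookkeeping you describe for $\chi_j$ and $\Theta_j$ is exactly what produces the multiplier structure \eqref{form:R1}--\eqref{form:R3} in the paper's computation.
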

\begin{proof}
We begin with the expansion
\[
\eta^*(\zeta(x))=\eps \cos x+\eps^2(\cos(2x)-\mez)+\eps^3\big(\tdm \cos(3x)-\cos x \big)+O(\eps^4),
\]
from \eqref{etazeta}. Inserting it into the Fourier expansion of the Riemann stretch \eqref{z1}, we find that 
\begin{align*}
&\p_x X=1+\eps e^z\cos x+2\eps^2e^{2z}\cos(2x)+\eps^3\left\{\frac92 e^{3z}\cos(3x)-e^z\cos x \right\}+O(\eps^4),\\
 &\p_zX=\eps e^z\sin x+2\eps^2e^{2z}\sin(2x)+\eps^3\left\{\frac92 e^{3z}\sin(3x)-e^z\sin x\right\}+O(\eps^4).
\end{align*}
It follows that the Jacobian is 
\[
\cJ(x, z)=1+2\eps e^z \cos x+\eps^2e^{2z}(1+4\cos(2x))+\eps^3\left\{e^{3z}[9\cos (3x)+4\cos x)]-2e^z\cos x\right\}+O(\eps^4).
\]
 Given these expansions, we return to 
\bq\label{sys:Tt:100}
\begin{cases}
\Delta_{x, z}\Tt-\beta\cJ\Tt=0\quad\text{in } \{(x, z): z<0\},\\
\Tt(x, 0)=f(x), \quad     
\na_{x, z}\Tt\to 0\quad\text{as } z\to -\infty,
\end{cases}
\eq
as defined in \eqref{system:Theta}.  We recall from the proof of  Theorem \ref{theo:flattenG} that $\cG_{\eps, \beta}f=\p_z\Tt(\cdot, 0)$ and $\Tt$ is analytic in $(\varepsilon, \beta)$. In particular, for fixed $\beta>0$,  we can expand $\Tt=\Tt^0+\eps \Tt^1+\eps^2\Tt^2+O(\eps^3)$, where $\Tt^0$, $\Tt^1$, $\Tt^2$, and $\Tt^3$ respectively satisfy
\bq\label{sys:Tt0}
\begin{cases}
\Delta_{x, z}\Tt^0-\beta\Tt^0=0\quad\text{in } \{(x, z): z<0\},\\
\Tt^0(x, 0)=f(x),\\
\p_z\Tt^0\to 0\quad\text{as } z\to -\infty,
\end{cases}
\eq
\bq\label{sys:Tt1}
\begin{cases}
\Delta_{x, z}\Tt^1-\beta\Tt^1={2\beta} e^z\cos x\Tt^0\quad\text{in } \{(x, z): z<0\},\\
\Tt^1(x, 0)=0,\\
\p_z\Tt^1\to 0\quad\text{as } z\to -\infty,
\end{cases}
\eq
\bq\label{sys:Tt2}
\begin{cases}
\Delta_{x, z}\Tt^2-\beta\Tt^2={2\beta} e^z\cos x\Tt^1+\beta e^{2z}(1{+4}\cos(2x))\Tt^0\quad\text{in } \{(x, z): z<0\},\\
\Tt^2(x, 0)=0,\\
\p_z\Tt^2\to 0\quad\text{as } z\to -\infty,
\end{cases}
\eq
\bq\label{sys:Tt3}
\begin{cases}
\begin{aligned}\Delta_{x, z}\Tt^3-\beta\Tt^3&=2\beta e^z\cos x\Tt^2+\beta e^{2z}(1+4\cos(2x))\Tt^1\\
&\quad+\beta e^{3z}[{9\cos(3x)+4\cos x}]\Tt^0{+2}\beta e^z\cos x\Tt^0\quad\text{in } \{(x, z): z<0\},
\end{aligned}\\
\Tt^3(x, 0)=0,\\
\p_z\Tt^3\to 0\quad\text{as } z\to -\infty.
\end{cases}
\eq
Comparing with the expansion \eqref{expandcG}, we find that $R_jf=\p_z\Tt^j(\cdot, 0)$. We  remark that when $\beta=0$, it is a classical calculation for \eqref{sys:Tt:100} that $\p_z\Tt(\cdot, 0)=|D|f$, which is the Fourier multiplier obtained in \cite{NguyenStrauss}.

We consider the $\Theta^j$, one at a time.  For $j=0$, 
the solution of \eqref{sys:Tt0} is given by
\bq\label{Tt0}
\wh{\Tt^0}(k, z)=\wh{f}(k)e^{\Om(k)z},\quad \Om(k):=(k^2+\beta)^\mez,
\eq
where $\wh{\Tt^0}(k, z)$ denotes the Fourier coefficient of $\Tt^0(x, z)$ with respect to $x$. Consequently
\bq
R_0f=\p_z\Tt(\cdot, 0)=\Om(D)f.
\eq

Next, we consider the equation for $\Theta^1$. Taking the Fourier transform in $x$ and using \eqref{sys:Tt1} and \eqref{Tt0}, we obtain the following equation for $g(k, z):=\wh{\Tt^1}(k, z)$: 
 \bq\label{ODE:g}
 \begin{aligned}
\p_z^2g-(k^2+\beta)g&={\beta }e^z(\wh{\Tt^0}(k-1, z)+ \wh{\Tt^0}(k+1, z))\\
&={\beta }e^z\wh{f}(k-1)e^{\Om(k-1)z}{+\beta} e^z\wh{f}(k+1)e^{\Om(k+1)z}.
\end{aligned}
\eq
Now the general solution of a generic, inhomogeneous ODE of the form
\[
\p_z^2P-(k^2+\beta)P=F(k, z)
\]
 is 
 \[
 P(k, z)=C_1e^{\Om(k)z}+C_2e^{-\Om(k)z}+P_*(k, z),
\]
where  $P_*$ is the particular solution
\bq\label{G*}
P_*(k, z)=\frac{1}{2\Om(k)}e^{\Om(k)z}\int e^{-\Om(k)z}F(k, z)dz-\frac{1}{2\Om(k)}e^{-\Om(k)z}\int e^{\Om(k)z}F(k, z)dz.
\eq
Using \eqref{G*} we find that the particular solution of \eqref{ODE:g} that vanishes as $z\to -\infty$ together with its derivatives, is
\begin{align*}
g_*(k, z)&=\beta\wh{f}(k-1)e^{[\Om(k-1)+1]z}\{[\Om(k-1)+\Om(k)+1][\Om(k-1)-\Om(k)+1]\}^{-1}\\
&\quad+\beta\wh{f}(k+1)e^{[\Om(k+1)+1]z}\{[\Om(k+1)+\Om(k)+1][\Om(k+1)-\Om(k)+1]\}^{-1}. 
\end{align*}
Since $\p_z g\to 0$ as $z\to -\infty$ and $g(k, 0)=0$, we deduce
\bq\label{Tt1}
\begin{aligned}
\wh{\Tt^1}(k, z)&=g(k, z)=-g_*(k, 0)e^{\Om(k)z}+g_*(k, z),\\
&={\beta}\wh{f}(k-1)\left\{e^{[\Om(k-1)+1]z}-e^{\Om(k)z}\right\}A_k^-+\beta\wh{f}(k+1)\left\{e^{[\Om(k+1)+1]z}-e^{\Om(k)z}\right\}A^+_k,
\end{aligned}
\eq
where we have denoted 
\bq 
A^\pm_k=\{[\Om(k\pm 1)+\Om(k)+1][\Om(k\pm 1)-\Om(k)+1]\}^{-1}. \label{ak1}
\eq
It follows that
\[
\p_zg(k, 0)= \widehat{R_1f}(k) = {C^-_k}\wh{f}(k-1){+ C^+_k}\wh{f}(k+1),
\]
where
\bq  \label{coeff:C}
C^\pm_k=\beta[\Om(k\pm 1)+\Om(k)+1]^{-1}.
\eq
Note  that for $k\in \Zz$, $\Om(k\pm1)-\Om(k)+1\ne 0$, so the coefficients $C_k$ are well-defined. 

Next we consider the equation for $\Theta^2$.  Defining $h(k, z)=\wh{\Tt^2}(k, z)$, we find from \eqref{sys:Tt2} that
 \bq\label{ODE:h}
 \begin{aligned}
\p_z^2h-(k^2+\beta)h=\wh{H}(k, z),~~\textrm{with} ~~ H(x, z)={2\beta} e^z\cos x\Tt^1{+\beta} e^{2z}(1+4\cos(2x))\Tt^0.
\end{aligned}
\eq
Using \eqref{Tt0} and \eqref{Tt1} we compute
\bq
\begin{aligned}
\wh{H}(k, z)&=-\beta e^z\{\wh{\Tt^1}(k-1)+\wh{\Tt^1}(k+1)\}+\beta e^{2z}\{\wh{\Tt^0}(k){+2}\wh{\Tt^0}(k-2){+2}\wh{\Tt^0}(k+2)\}\\
&=\beta^2e^z\left\{\wh{f}(k-2)(e^{[\Om(k-2)+1]z}-e^{\Om(k-1)z})A^-_{k-1} +\wh{f}(k)(e^{[\Om(k)+1]z}-e^{\Om(k-1)z})A^+_{k-1}\right.\\
&\quad\quad\left.+\wh{f}(k)(e^{[\Om(k)+1]z}-e^{\Om(k+1)z})A^-_{k+1}+\wh{f}(k+2)(e^{[\Om(k+2)+1]z}-e^{\Om(k+1)z})A^+_{k+1}\right\}\\
&\quad+ \beta e^{2z}\left\{\wh{f}(k)e^{\Om(k)z}{+2}\wh{f}(k-2)e^{\Om(k-2)z}{+2}\wh{f}(k+2)e^{\Om(k+2)z}\right\}.
\end{aligned}
\eq
Thus in view of \eqref{G*}, the particular solution of \eqref{ODE:h} is $h_*=h_*^--h_*^+$, where
\bq
\begin{aligned}
h_*^\pm(k, z)&=\frac{\beta^2}{2\Om(k)}\left\{\wh{f}(k-2)A^-_{k-1}\left[\frac{e^{[\Om(k-2)+2]z}}{\Om(k-2)\pm\Om(k)+2}-\frac{e^{[1+\Om(k-1)]z}}{1+\Om(k-1)\pm\Om(k)}\right] \right.\\
&\quad +\left.\wh{f}(k)A^+_{k-1}\left[\frac{e^{[\Om(k)+2]z}}{2+\Om(k)\pm\Om(k)}-\frac{e^{[1+\Om(k-1)]z}}{1+\Om(k-1)\pm\Om(k)}\right]\right.\\
&\quad\left.+\wh{f}(k)A^-_{k+1}\left[\frac{e^{[\Om(k)+2]z}}{2}-\frac{e^{[1+\Om(k+1)]z}}{1+\Om(k+1)-\Om(k)}\right]  \right.\\
&\quad +\left.\wh{f}(k+2)A^+_{k+1}\left[\frac{e^{[\Om(k+2)+2]z}}{\Om(k+2)\pm\Om(k)+2}-\frac{e^{[1+\Om(k+1)]z}}{1+\Om(k+1)\pm\Om(k)}\right]\right\}\\
&+\frac{\beta}{2\Om(k)}\left\{\wh{f}(k)\frac{e^{[\Om(k)+2]z}}{2+\Om(k)\pm\Om(k)}{+2}\wh{f}(k-2)\frac{e^{[\Om(k-2)+2]z}}{\Om(k-2)\pm \Om(k)+2}\right.\\
&\quad\quad\left.{+2}\wh{f}(k+2)\frac{e^{[\Om(k+2)+2]z}}{\Om(k+2)\pm\Om(k)+2}\right\}.
\end{aligned}
\eq
Combining terms yields 
\bq
\begin{aligned}
h_*(k, z)&=\beta^2\wh{f}(k-2)\left\{e^{[\Om(k-2)+2]z}A^-_{k, 2}(A^-_{k-1}{+\frac{2}{\beta}})-e^{[\Om(k-1)+1]z}A^-_{k-1}A^-_k \right\}\\
&\quad +\beta^2\wh{f}(k+2)\left\{e^{[\Om(k+2)+2]z}A^+_{k, 2}(A^+_{k+1}{+\frac{2}{\beta}})-e^{[\Om(k+1)+1]z}A^+_kA^+_{k+1} \right\}\\
&\quad+\beta^2\wh{f}(k)\left\{e^{[\Om(k)+2]z}\frac{A^+_{k-1}+A^-_{k+1}+\beta^{-1}}{4[\Om(k)+1]}\right.\\
&\quad\quad\left.-e^{[\Om(k-1)+1]z}A^+_{k-1}A^-_k-e^{[\Om(k+1)+1]z}A^-_{k+1}A^+_k\right\},
\end{aligned}
\eq
where we have denoted
\bq
A^\pm_{k, 2}=\{[\Om(k\pm 2)+\Om(k)+2][\Om(k\pm 2)-\Om(k)+2]\}^{-1}. \label{ak2}
\eq
Clearly, $h_*\to 0$ as $z\to -\infty$. Consequently,
\bq
\begin{aligned}
h(k, z)&=\wh{\Tt^2}(k, z)=-h_*(k, 0)e^{\Om(k)z}+h_*(k, z)\\
&=\beta^2\wh{f}(k-2)\left\{[e^{[\Om(k-2)+2]z}-e^{\Om(k)z}]A^-_{k, 2}(A^-_{k-1}{+\frac{2}{\beta}})-[e^{[\Om(k-1)+1]z}-e^{\Om(k)z}]A^-_{k-1}A^-_k \right\}\\
&\quad +\beta^2\wh{f}(k+2)\left\{[e^{[\Om(k+2)+2]z}-e^{\Om(k)z}]A^+_{k, 2}(A^+_{k+1}{+\frac{2}{\beta}})-[e^{[\Om(k+1)+1]z}-e^{\Om(k)z}]A^+_kA^+_{k+1} \right\}\\
&\quad+\beta^2\wh{f}(k)\left\{[e^{[\Om(k)+2]z}-e^{\Om(k)z}]\frac{A^+_{k-1}+A^-_{k+1}+\beta^{-1}}{4[\Om(k)+1]}\right.\\
&\quad\quad\left.-[e^{[\Om(k-1)+1]z}-e^{\Om(k)z}]A^+_{k-1}A^-_k-[e^{[\Om(k+1)+1]z}-e^{\Om(k)z}]A^-_{k+1}A^+_k\right\}
\end{aligned}
\eq
and 
\bq
\begin{aligned}
\p_zh(k, 0)
&=\beta^2\wh{f}(k-2)\left\{[\Om(k-2)+2-\Om(k)]A^-_{k, 2}(A^-_{k-1}{+\frac{2}{\beta}})-[\Om(k-1)+1-\Om(k)]A^-_{k-1}A^-_k \right\}\\
&\quad +\beta^2\wh{f}(k+2)\left\{[\Om(k+2)+2-\Om(k)]A^+_{k, 2}(A^+_{k+1}{+\frac{2}{\beta}})-[\Om(k+1)+1-\Om(k)]A^+_kA^+_{k+1} \right\}\\
&\quad+\beta^2\wh{f}(k)\left\{[\Om(k)+2-\Om(k)]\frac{A^+_{k-1}+A^-_{k+1}+\beta^{-1}}{4[\Om(k)+1]}\right.\\
&\quad\quad\left.-[\Om(k-1)+1-\Om(k)]A^+_{k-1}A^-_k-[\Om(k+1)+1-\Om(k)]A^-_{k+1}A^+_k\right\}.
\end{aligned}
\eq
Setting
\bq.  \label{coeff:B}
\begin{aligned}
B^-_k&=\beta\left([\Om(k-2)+2-\Om(k)]A^-_{k, 2}(\beta A^-_{k-1}{+2})-\beta[\Om(k-1)+1-\Om(k)]A^-_{k-1}A^-_k\right) ,\\
B^+_k&= \beta\left([\Om(k+2)+2-\Om(k)]A^+_{k, 2}(\beta A^+_{k+1}{+2})-\beta[\Om(k+1)+1-\Om(k)]A^+_kA^+_{k+1}\right),\\
B^0_k&=\beta\Big(\frac{\beta(A^+_{k-1}+A^-_{k+1})+1}{2[\Om(k)+1]}-\beta[\Om(k-1)+1-\Om(k)]A^+_{k-1}A^-_k\\
&\quad-\beta[\Om(k+1)+1-\Om(k)]A^-_{k+1}A^+_k\Big),
\end{aligned}
\eq
we obtain
\bq
\widehat{R_2f}(k) =\p_zh(k, 0)= \wh{f}(k-2)B^-_k+\wh{f}(k)B^0_k+\wh{f}(k+2)B^+_k,
\eq
as desired. 
\begin{rema}
Substituting \eqref{ak1} and \eqref{ak2} into \eqref{coeff:B}, we find equivalent expressions of the $B$ coefficients that depend only on $\beta$ and the dispersion relation $\Omega$:
\bq  \label{coeff:B}
\begin{aligned}
B^-_k&=\frac{\beta  \left(2-\frac{\beta }{(\Omega (k-2)+\Omega (k-1)+1) (\Omega (k-1)+\Omega (k)+1)}\right)}{\Omega (k-2)+\Omega (k)+2},\\
B^+_k&= \frac{\beta  \left(2-\frac{\beta }{(\Omega (k)+\Omega (k+1)+1) (\Omega (k+1)+\Omega (k+2)+1)}\right)}{\Omega (k)+\Omega (k+2)+2},\\
B^0_k&=\frac{\beta  \left(\beta  \left(-\frac{1}{(\Omega (k)+\Omega (k+1)+1)^2}-\frac{1}{(\Omega (k-1)+\Omega (k)+1)^2}\right)+1\right)}{2 (\Omega (k)+1)}.
\end{aligned}
\eq
\end{rema}
Now we consider the equation for $\Theta^3$.  
From \eqref{sys:Tt3} we find that $m(k, z):=\wh{\Tt^3}(k, z)$ obeys
\bq\label{eq:m}
\begin{aligned}
&\p_z^2m(k, z)-(k^2+\beta)m(k, z)=\wh{M}(k, z),  \quad \text{ where }\\
& M(x,z) ={2\beta} e^z\cos x\Tt^2+\beta e^{2z}(1{+4}\cos(2x))\Tt^1\\
&\qquad\qquad\qquad+\beta e^{3z}[{4\cos x+9\cos(3x)}]\Tt^0{-2}\beta e^z\cos x\Tt^0.
\end{aligned}
\eq
Taking the Fourier transform of $M$ with respect to $x$, we compute
\begin{align}
\wh{M}(k, z)&={\beta} e^z[\wh{\Tt^2}(k-1, z)+\wh{\Tt^2}(k+1, z)]+\beta e^{2z}[\wh{\Tt^1}(k, z){+2}\wh{\Tt^1}(k-2, z){+2}\wh{\Tt^1}(k+2, z)] \nonumber \\
&\quad{+\frac{9}{2}} \beta e^{3z}[\wh{\Tt^0}(k-3, z)+\wh{\Tt^0}(k+3, z)]+\beta({2e^{3z}-e^z}) [\wh{\Tt^0}(k-1, z)+\wh{\Tt^0}(k+1, z)], \label{Meqn}
\end{align}
which implies $m(k, z)$, and thus $\p_zm(k,0)$, is a linear combination of $\wh f(k\pm 1)$ and $\wh f(k\pm 3)$. In Sections 5 and 6, we will require only the coefficients of $\wh{f}(k\pm 3)$. For this reason, the coefficients  of $\wh{f}(k\pm 1)$ are omitted in the calculations below and instead replaced by placeholder variables $E_M^{\pm} = E_M^{\pm}(z,k,\beta)$, respectively. Explicit formulas for $E_M^{\pm}$ can be found in the companion Mathematica file. 
\allowdisplaybreaks
\begin{align*}
&\wh{M}(k, z)\\
&={\wh{f}(k-3)}\beta^3 e^z\left\{[e^{[\Om(k-3)+2]z}-e^{\Om(k-1)z}]A^-_{k-1, 2}(A^-_{k-2}{+\frac{2}{\beta}})-[e^{[\Om(k-2)+1]z}-e^{\Om(k-1)z}]A^-_{k-2}A^-_{k-1} \right\}\\
&\quad {+}\wh{f}(k+3)\beta^3 e^z\left\{[e^{[\Om(k+3)+2]z}-e^{\Om(k+1)z}]A^+_{k+1, 2}(A^+_{k+2}{+\frac{2}{\beta}})-[e^{[\Om(k+2)+1]z}-e^{\Om(k+1)z}]A^+_{k+1}A^+_{k+2} \right\}\\
&\quad {+2}\wh{f}(k-3)\beta^2e^{2z}\left\{e^{[\Om(k-3)+1]z}-e^{\Om(k-2)z}\right\}A_{k-2}^-{+2}\wh{f}(k+3)\beta^2e^{2z}\left\{e^{[\Om(k+3)+1]z}-e^{\Om(k+2)z}\right\}A^+_{k+2}\\
&\quad{+\frac92} \beta e^{3z}[\wh{f}(k-3)e^{\Om(k-3)z}+\wh{f}(k+3)e^{\Om(k+3)z}] + E_M^{+}(z, k, \beta)\wh{f}(k+ 1)+E_M^{-}(z, k, \beta) \wh{f}(k- 1).\\\\
&={\wh{f}(k-3)} \left\{\beta^3[e^{[\Om(k-3)+3]z}-e^{[\Om(k-1)+1]z}]A^-_{k-1, 2}(A^-_{k-2}{+\frac{2}{\beta}})-\beta^3[e^{[\Om(k-2)+2]z}-e^{[\Om(k-1)+1]z}]A^-_{k-2}A^-_{k-1} \right\}\\
&\quad {+}\wh{f}(k+3) \left\{\beta^3[e^{[\Om(k+3)+3]z}-e^{[\Om(k+1)+1]z}]A^+_{k+1, 2}(A^+_{k+2}{+\frac{2}{\beta}})-\beta^3[e^{[\Om(k+2)+2]z}-e^{[\Om(k+1)+1]z}]A^+_{k+1}A^+_{k+2} \right\}\\
&\quad +{2}\wh{f}(k-3)\beta^2\left\{e^{[\Om(k-3)+3]z}-e^{[\Om(k-2)+2]z}\right\}A_{k-2}^-{+2}\wh{f}(k+3)\beta^2\left\{e^{[\Om(k+3)+3]z}-e^{[\Om(k+2)+2]z}\right\}A^+_{k+2}\\
&\quad{\frac92} \beta [\wh{f}(k-3)e^{[\Om(k-3)+3]z}+\wh{f}(k+3)e^{[\Om(k+3)+3]z}] + E_M^{+}(z, k, \beta)\wh{f}(k+ 1)+E_M^{-}(z, k, \beta) \wh{f}(k- 1).\\
&=\wh{f}(k-3) \left\{{\beta^3}[e^{[\Om(k-3)+3]z}-e^{[\Om(k-1)+1]z}]A^-_{k-1, 2}(A^-_{k-2}{+\frac{2}{\beta}})\right.\\
&\quad\left.{-\beta^3}[e^{[\Om(k-2)+2]z}-e^{[\Om(k-1)+1]z}]A^-_{k-2}A^-_{k-1}+{2\beta^2}[e^{[\Om(k-3)+3]z}-e^{[\Om(k-2)+2]z}]A_{k-2}^-{+\frac92} \beta e^{[\Om(k-3)+3]z}\right\}\\
&\quad  +\wh{f}(k+3) \left\{-\beta^3[e^{[\Om(k+3)+3]z}-e^{[\Om(k+1)+1]z}]A^+_{k+1, 2}(A^+_{k+2}{+\frac{2}{\beta}})\right.\\
&\quad\left. {-\beta^3}[e^{[\Om(k+2)+2]z}-e^{[\Om(k+1)+1]z}]A^+_{k+1}A^+_{k+2}+{2\beta^2}[e^{[\Om(k+3)+3]z}-e^{[\Om(k+2)+2]z}]A^+_{k+2}{+\frac92}\beta e^{[\Om(k+3)+3]z}\right\}\\
&\quad+E_M^{+}(z, k, \beta)\wh{f}(k+ 1)+E_M^{-}(z, k, \beta) \wh{f}(k- 1).
\end{align*}
Using  \eqref{G*} we find  that the particular solution of \eqref{eq:m} is $m_*=m_*^--m^+_*$, where
\bq
\begin{aligned}
m^\pm_*&=\wh{f}(k-3)\frac{1}{2\Om(k)} \left\{ {\beta^3}\left[\frac{e^{[\Om(k-3)+3]z}}{\Om(k-3)+3\pm\Om(k)}-\frac{e^{[\Om(k-1)+1]z}}{\Om(k-1)+1\pm\Om(k)}\right]A^-_{k-1, 2}(A^-_{k-2} {+\frac{2}{\beta}})\right.\\
&\quad\left. {-}\beta^3\left[\frac{e^{[\Om(k-2)+2]z}}{\Om(k-2)+2\pm\Om(k)}-\frac{e^{[\Om(k-1)+1]z}}{\Om(k-1)+1\pm\Om(k)}\right]A^-_{k-2}A^-_{k-1}\right.\\
&\quad\left.+ {2}\beta^2\left[\frac{e^{[\Om(k-3)+3]z}}{\Om(k-3)+3\pm\Om(k)}-\frac{e^{[\Om(k-2)+2]z}}{\Om(k-2)+2\pm\Om(k)}\right]A_{k-2}^- {+\frac92} \beta\frac{e^{[\Om(k-3)+3]z}}{\Om(k-3)+3\pm\Om(k)}\right\}\\
&\quad  +\wh{f}(k+3)\frac{1}{2\Om(k)} \left\{ {\beta^3}\left[\frac{e^{[\Om(k+3)+3]z}}{\Om(k+3)+3\pm\Om(k)}-\frac{e^{[\Om(k+1)+1]z}}{\Om(k+1)+1\pm\Om(k)}\right]A^+_{k+1, 2}(A^+_{k+2} {+\frac{2}{\beta}})\right.\\
&\quad\left.  {-\beta^3}\left[\frac{e^{[\Om(k+2)+2]z}}{\Om(k+2)+2\pm\Om(k)}-\frac{e^{[\Om(k+1)+1]z}}{\Om(k+1)+1\pm\Om(k)}\right]A^+_{k+1}A^+_{k+2}\right.\\
&\quad\left.+ {2\beta^2}\left[\frac{e^{[\Om(k+3)+3]z}}{\Om(k+3)+3\pm\Om(k)}-\frac{e^{[\Om(k+2)+2]z}}{\Om(k+2)+2\pm\Om(k)}\right]A^+_{k+2} {+\frac92} \beta\frac{e^{[\Om(k+3)+3]z}}{\Om(k+3)+3\pm\Om(k)}\right\}\\
&\quad+E_{m_*^{\pm}}^{+}(z, k, \beta)\wh{f}(k+ 1)+E_{m_*^{\pm}}^{-}(z, k, \beta) \wh{f}(k- 1).
\end{aligned}
\eq
Here, $E_{m_*^{\pm}}^{+}$ is a placeholder for the coefficient of $\hat{f}(k+1)$ in $m_*^{\pm}$ and similarly for $E_{m_*^{\pm}}^{-}$.
Since $m(k, 0)=0$, the full solution of \eqref{eq:m} is $m=m^--m^+$, where
\bq
\begin{aligned}
m^\pm&=-m_*^\pm(k, 0)e^{\Om(k)z}+m_*^\pm(k, z) \\
&=\wh{f}(k-3)\frac{1}{2\Om(k)} \left\{{\beta^3}\left[\frac{e^{[\Om(k-3)+3]z}-e^{\Om(k)z}}{\Om(k-3)+3\pm\Om(k)}-\frac{e^{[\Om(k-1)+1]z}-e^{\Om(k)z}}{\Om(k-1)+1\pm\Om(k)}\right]A^-_{k-1, 2}(A^-_{k-2}{+\frac{2}{\beta}})\right.\\
&\quad\left.{-}\beta^3\left[\frac{e^{[\Om(k-2)+2]z}-e^{\Om(k)z}}{\Om(k-2)+2\pm\Om(k)}-\frac{e^{[\Om(k-1)+1]z}-e^{\Om(k)z}}{\Om(k-1)+1\pm\Om(k)}\right]A^-_{k-2}A^-_{k-1}\right.\\
&\quad\left.+{2}\beta^2\left[\frac{e^{[\Om(k-3)+3]z}-e^{\Om(k)z}}{\Om(k-3)+3\pm\Om(k)}-\frac{e^{[\Om(k-2)+2]z}-e^{\Om(k)z}}{\Om(k-2)+2\pm\Om(k)}\right]A_{k-2}^-{+\frac92} \beta\frac{e^{[\Om(k-3)+3]z}-e^{\Om(k)z}}{\Om(k-3)+3\pm\Om(k)}\right\}\\
&\quad  +\wh{f}(k+3)\frac{1}{2\Om(k)} \left\{{\beta^3}\left[\frac{e^{[\Om(k+3)+3]z}-e^{\Om(k)z}}{\Om(k+3)+3\pm\Om(k)}-\frac{e^{[\Om(k+1)+1]z}-e^{\Om(k)z}}{\Om(k+1)+1\pm\Om(k)}\right]A^+_{k+1, 2}(A^+_{k+2}{+\frac{2}{\beta}})\right.\\
&\quad\left. {-}\beta^3\left[\frac{e^{[\Om(k+2)+2]z}-e^{\Om(k)z}}{\Om(k+2)+2\pm\Om(k)}-\frac{e^{[\Om(k+1)+1]z}-e^{\Om(k)z}}{\Om(k+1)+1\pm\Om(k)}\right]A^+_{k+1}A^+_{k+2}\right.\\
&\quad\left.+{2}\beta^2\left[\frac{e^{[\Om(k+3)+3]z}-e^{\Om(k)z}}{\Om(k+3)+3\pm\Om(k)}-\frac{e^{[\Om(k+2)+2]z}-e^{\Om(k)z}}{\Om(k+2)+2\pm\Om(k)}\right]A^+_{k+2}{+\frac92} \beta\frac{e^{[\Om(k+3)+3]z}-e^{\Om(k)z}}{\Om(k+3)+3\pm\Om(k)}\right\}\\
&\quad+E_{m^\pm}^{+}(z, k, \beta)\wh{f}(k+ 1)+E_{m^\pm}^{-}(z, k, \beta) \wh{f}(k- 1).
\end{aligned}
\eq
Yet again, $E_{m^{\pm}}^{+}$ is a placeholder for the coefficient of $\hat{f}(k+1)$ in $m^{\pm}$ and similarly for $E_{m^{\pm}}^{-}$. Differentiating $m$ in $z$ and setting $z=0$ yields

\bq
\begin{aligned} \label{coeff:D}
\wh{R_3f}(k)&=\p_zm(k, 0)\\
&=\beta\wh{f}(k-3)\left\{{\beta}\left[\frac{1}{\Om(k-3)+3+\Om(k)}-\frac{1}{\Om(k-1)+1+\Om(k)}\right]A^-_{k-1, 2}(\beta A^-_{k-2}{+2})\right.\\
&\quad\left.{-}\beta^2\left[\frac{1}{\Om(k-2)+2+\Om(k)}-\frac{1}{\Om(k-1)+1+\Om(k)}\right]A^-_{k-2}A^-_{k-1}\right.\\
&\quad\left.+{2}\beta\left[\frac{1}{\Om(k-3)+3+\Om(k)}-\frac{1}{\Om(k-2)+2+\Om(k)}\right]A_{k-2}^-{+\frac92} \frac{1}{\Om(k-3)+3+\Om(k)}\right\}\\
&\quad  +\beta\wh{f}(k+3) \left\{{\beta}\left[\frac{1}{\Om(k+3)+3+\Om(k)}-\frac{1}{\Om(k+1)+1+\Om(k)}\right]A^+_{k+1, 2}(\beta A^+_{k+2}{+2})\right.\\
&\quad\left. {-}\beta^2\left[\frac{1}{\Om(k+2)+2+\Om(k)}-\frac{1}{\Om(k+1)+1+\Om(k)}\right]A^+_{k+1}A^+_{k+2}\right.\\
&\quad\left.+{2}\beta\left[\frac{1}{\Om(k+3)+3+\Om(k)}-\frac{1}{\Om(k+2)+2+\Om(k)}\right]A^+_{k+2}{+\frac92} \frac{1}{\Om(k+3)+3+\Om(k)}\right\}\\
&\quad+ D^{-1}_k\wh{f}(k- 1)+ D^{1}_k\wh{f}(k+1).\\
&=D^{-3}_k\wh{f}(k-3)+D^3_k\wh{f}(k+3)+ D^{-1}_k\wh{f}(k- 1)+ D^{1}_k\wh{f}(k+1),
\end{aligned}
\eq
where $D_{k}^{\pm 1}$ are placeholders for the coefficients of $\hat{f}(k\pm 1)$.
\begin{rema}
Similar to the previous remark, one can substitute \eqref{ak1} and \eqref{ak2} into \eqref{coeff:D} to find equivalent expressions of the $D$ coefficients that depend only on $\beta$ and the dispersion relation $\Omega$. For $D_k^{\pm 3}$, we have
\bq
\label{coeff:Dv2:1}
\begin{aligned}
D^{-3}_k &= \beta  \biggr\{2 \beta ^2 \Big[\Omega (k-3)+\Omega (k-2)+\Omega (k-1)+\Omega (k)+4\Big]-4 \beta  \Big[\Omega (k-2)+\Omega (k-1)+1\Big] \\ &\quad \cdot \Big[\Omega (k-2)^2+\big(\Omega (k)+3\big) \Omega (k-2)+3 \Omega (k)+\Omega (k-1) \big(\Omega (k-1)+\Omega (k)+3\big) \\ &\quad +\Omega (k-3) \big(\Omega (k-2)+\Omega (k-1)+2 \Omega (k)+3\big)+4\Big]+9 \Big[\Omega (k-3)+\Omega (k-2)+1\Big] \\ &\quad \cdot \Big[\Omega (k-3)+\Omega (k-1)+2\Big] \Big[\Omega (k-2)+\Omega (k-1)+1\Big] \Big[\Omega (k-2)+\Omega (k)+2\Big] \\ &\quad \cdot \Big[\Omega (k-1)+\Omega (k)+1\Big]\biggr\} \biggr/ \biggr\{  2 \Big[\Omega (k-3)+\Omega (k-2)+1\Big]\Big[\Omega (k-3)+\Omega (k-1)+2\Big] \\ &\quad \cdot \Big[\Omega (k-2)+\Omega (k-1)+1\Big]\Big[\Omega (k-3)+\Omega (k)+3\Big]  \Big[\Omega (k-2)+\Omega (k)+2\Big]\Big[\Omega (k-1)+\Omega (k)+1\Big] \biggr\}, 
\end{aligned}
\eq
and
\bq\label{coeff:Dv2:2}
\begin{aligned}
D^{3}_k &= \beta  \biggr\{2 \beta ^2 \Big[\Omega (k)+\Omega (k+1)+\Omega (k+2)+\Omega (k+3)+4\Big]-4 \beta  \Big[\Omega (k+1)+\Omega (k+2)+1\Big] \\ &\quad \cdot \Big[\Omega (k+1)^2+\big(\Omega (k+3)+3\big) \Omega (k+1)+3 \Omega (k+3)+\Omega (k+2) \big(\Omega (k+2)+\Omega (k+3)+3\big)+\Omega (k) \\ &\quad \cdot \big(\Omega (k+1)+\Omega (k+2)+2 \Omega (k+3)+3\big)+4\Big]+9 Big[\Omega (k)+\Omega (k+1)+1\Big] \Big[\Omega (k)+\Omega (k+2)+2\Big] \\ &\quad \cdot \Big[\Omega (k+1)+\Omega (k+2)+1\Big] \Big[\Omega (k+1)+\Omega (k+3)+2\Big] \Big[\Omega (k+2)+\Omega (k+3)+1\Big]\biggr\} \biggr/ \biggr\{ 2 \Big[\Omega (k) \\ &\quad+\Omega (k+1)+1\Big] \Big[\Omega (k)+\Omega (k+2)+2\Big] \Big[\Omega (k+1)+\Omega (k+2)+1\Big] \Big[\Omega (k)+\Omega (k+3)+3\Big] \\ &\quad \cdot \Big[\Omega (k+1)+\Omega (k+3)+2\Big] \Big[\Omega (k+2)+\Omega (k+3)+1\Big]\biggr\}.
\end{aligned}
\eq
The explicit expressions of $D_k^{\pm 1}$ are significantly more cumbersome and can be found in the companion Mathematica file.
\end{rema}This completes the proof of Proposition \ref{prop:Rj}.
\end{proof}
With 
the operators $R_j=R_j(\beta)$ in hand for $j \leq 3$, 
we return to the $\varepsilon$-expansion of the Hamiltonian operator \eqref{expandHepsbeta} and substitute
\begin{align*}
\beta = \beta_* + \delta,
\end{align*}
where $\beta_*$ is the resonant value $\approx 2.73$ and $\delta$ is a small deviation from $\beta_*$. By virtue of Theorem \ref{theo:flattenG}  (ii), the operators $R_j$ depend analytically on $\beta \in (0, \infty)$, 
so that  we can expand  $R_j$ to third order in $\delta$ as
 \bq\label{def:Sjl}
  R_j=\sum_{\ell=0}^3\delta^\ell R_{j, \ell}+O(\delta^4).
 \eq
 Because $\beta$ only appears in the lower right corner of $\cH^j$, for $j\in \{0, 1, 2, 3\}$ we denote   
 \bq
 \begin{aligned}
&\cH^{j ,0}=\cH^j\vert_{\delta=0},\\
& \cH^{j, \ell}=R_{j, \ell}K,\quad \ell \in \{1, 2, 3\}, \quad K=\begin{bmatrix} 0 & 0\\ 0 &1 \end{bmatrix}.
 \end{aligned}
 \eq
Then $\cH^j$  can be expanded in powers of $\delta$ as
 \bq\label{def:Hjl}
 \cH^{j}=\sum_{\ell=0}^3 \delta^\ell\cH^{j, \ell}+O(\delta^4). 
 \eq
Combining \eqref{Hexpansion:eps} and \eqref{def:Hjl}, we have the full expansion of 
the Hamiltonian $\cH_{\eps, \beta_*+\delta}$  in powers of both $\eps$ and $\delta$:
 \bq\label{expand:H}
 \cH_{\eps, \beta_*+\delta}=\sum_{j=0}^3\sum_{\ell=0}^{3}\eps^j\delta^\ell \cH^{j, \ell}+O(|\eps|^4+|\delta|^4).
 \eq
 We are now poised to expand the spectral data of the $2\times 2$ matrix $\textrm{L}_{\varepsilon,\delta}$ as power series in both $\varepsilon$ and $\delta$.

\section{Third-order expansions  of the matrix $\textrm{L}_{\eps, \delta}$}\label{section:expansions}
\subsection{Expansion of the eigenvectors $U^{\eps, \delta}_j$} 
We recall the basis $\{U_1,U_2\}$ from \eqref{def:U} and the projections $P_{\eps,\delta}$ from \eqref{def:Peps}.  
Since $P_{0, 0}U_j=U_j$ for $j=1,2$, we have 
\begin{align*}
U^{\eps, \delta}_j&= \{1-(P_{\eps, \delta}-P_{0, 0})^2\}^{-\mez} P_{\eps, \delta} U_j.
\end{align*}
Denoting 
\bq
P^{m, n}=\p_\eps^m\p_\delta^nP_{\eps, \delta}\vert_{(\eps, \delta)=(0, 0)} ,
\eq
we expand 
\bq\label{expand:P}
\begin{aligned}
P_{\eps, \delta}&=P_{0, 0}+\eps P^{1, 0}+\delta P^{0, 1}+\mez \eps^2 P^{2, 0}+\mez \delta^2 P^{0, 2}+\eps \delta P^{1, 1}\\
&\qquad+\frac16 \eps^3 P^{3, 0}+\mez \eps^2\delta P^{2, 1}+\mez \eps \delta^2 P^{1, 2}+\frac16\delta^3P^{0, 3}+O((\eps+\delta)^4).
\end{aligned}
\eq
This series and the ones that follow converge for small $(\eps,\delta)$ due to the discussion in Section \ref{Sec:Kato}.  
Using the elementary Taylor expansion $(1-x^2)^{-\mez}=1+\mez x^2+O(x^4)$, we find 
\bq\label{expand:Kato:1}
\begin{aligned}
\{I-(P_{\eps, \delta}-P_{0, 0})^2\}^{-\mez}&=I +\mez \eps^2 P^{1, 0}P^{1, 0}+\mez \delta^2 P^{0, 1}P^{0, 1}+\mez \eps \delta(P^{1, 0}P^{0, 1}+P^{0, 1}P^{1, 0})\\
&\quad+\frac14\eps^3(P^{1, 0}P^{2, 0}+P^{2, 0}P^{1, 0})+\frac14\delta^3(P^{0, 1}P^{0, 2}+P^{0, 2}P^{0, 1})\\
&\quad+ \eps^2 \delta\left[\mez(P^{1, 0}P^{1, 1}+P^{1, 1}P^{1, 0})+\frac14 (P^{0, 1}P^{2, 0}+P^{2, 0}P^{0, 1})\right]\\
&\quad+ \eps \delta^2\left[\mez(P^{0, 1}P^{1, 1}+P^{1, 1}P^{0, 1})+\frac14 (P^{1, 0}P^{0, 2}+P^{0, 2}P^{1, 0})\right]\\
&\quad+O((\eps+\delta)^4).
\end{aligned}
\eq
Combining the expansions \eqref{expand:P} and \eqref{expand:Kato:1}, we obtain the expansion of $U^{\eps, \delta}_j$ as 
\bq\label{expand:U}
U^{\eps, \delta}_j=U_j+\sum_{m+n=1}^3\eps^m\delta^nU^{(m, n)}_j+O((\eps+\delta)^4),
\eq
where the coefficients $U_j^{(m, n)}$ are given by
 \allowdisplaybreaks
\begin{align}\label{expand:U:start}
&U^{(1, 0)}_j=P^{1, 0}U_j,\quad U^{(0, 1)}_j=P^{0, 1}U_j,\\
&U^{(2, 0)}_j=\mez (P^{2, 0}+P^{1, 0}P^{1, 0})U_j,\quad U^{(0, 2)}_j=\mez (P^{0, 2}+P^{0, 1}P^{0, 1})U_j,\\
&U^{(1, 1)}_j=(P^{1, 1}+\mez P^{0, 1}P^{1, 0}+\mez P^{1, 0}P^{0, 1})U_j,\\
&U_j^{(3,0)} = \frac16\Big(P^{3,0} + \frac32\left(P^{2,0}P^{1,0}+P^{1,0}P^{2,0} \right)+3P^{1,0}P^{1,0}P^{1,0} \Big)U_j, \\
&U_j^{(0,3)}= \frac16\Big(P^{0,3} + \frac32\left(P^{0,1}P^{0,2}+P^{0,2}P^{0,1} \right)+3P^{0,1}P^{0,1}P^{0,1} \Big)U_j,\\
&U_j^{(2,1)} = \frac12\Big(P^{2,1} + \frac12\left(P^{0,1}P^{2,0} +2P^{1,0}P^{1,1}+2P^{1,1}P^{1,0}+P^{2,0}P^{0,1}\right) \nonumber \\
&\hspace{1cm} +\left(P^{1,0}P^{0,1}+P^{0,1}P^{1,0} \right)P^{1,0} + P^{1,0}P^{1,0}P^{0,1}\Big)U_j,
\\ \label{expand:U:end}
&U_j^{(1,2)} = \frac12\Big(P^{1,2} + \frac12\left(P^{1,0}P^{0,2} +2P^{1,1}P^{0,1}+2P^{0,1}P^{1,1}+P^{0,2}P^{1,0}\right) \nonumber \\
&\hspace{1cm} +\left(P^{1,0}P^{0,1}+P^{0,1}P^{1,0} \right)P^{0,1} + P^{0,1}P^{0,1}P^{1,0}\Big)U_j.
\end{align}
To calculate $P^{m, n}$ we use Neumann series and \eqref{expand:H}, yielding
\[
 \begin{aligned}
& (\cL_{\eps, \beta_*+\delta} - \lb)^{-1}-(\cL_{0, \beta_*} - \lb)^{-1}\\
&=\sum_{k=1}^\infty(-1)^k(\cL_{0, \beta_*} - \lb)^{-1}\left[(\cL_{\eps, \beta_*+\delta}- \cL_{0, \beta_*})(\cL_{0, \beta_*} - \lb)^{-1}\right]^k\\
 &=\sum_{k=1}^\infty(-1)^k(\cL_{0, \beta_*} - \lb)^{-1}\left[\left(\sum_{0\le j, \ell \le 3; j+\ell\ge 1}\eps^j\delta^\ell J\cH^{j, \ell}+O((\eps+\delta)^4)\right)(\cL_{0, \beta_*} - \lb)^{-1}\right]^k.
 \end{aligned}
 \]
 { Of course, due to the analyticity, the infinite series converges 
 and the remainder $O((\eps+\delta)^4)$ is indeed finite in an $(\eps,\delta)$ neighborhood of $(0,0)$ .}  
Denoting 
\[
S_\ld=(\cL_{0, \beta_*} - \lb)^{-1},
\]
we deduce from the preceding series that 
 \allowdisplaybreaks
\begin{align}\label{dL:start}
\p_\eps (\cL_{\eps, \beta_*+\delta} - \lb)^{-1}\vert_{(\eps, \delta)=(0, 0)}&=-S_\ld J\cH^{1, 0}S_\ld,\\
\p_\delta (\cL_{\eps, \beta_*+\delta} - \lb)^{-1}\vert_{(\eps, \delta)=(0, 0)}&=-S_\ld J\cH^{0, 1}S_\ld,\\
\p^2_\eps (\cL_{\eps, \beta_*+\delta} - \lb)^{-1}\vert_{(\eps, \delta)=(0, 0)}&=S_\ld\Big(-2 J\cH^{2, 0}+2J\cH^{1, 0}S_\ld J\cH^{1, 0}\Big)S_\ld,\\
\p^2_\delta (\cL_{\eps, \beta_*+\delta} - \lb)^{-1}\vert_{(\eps, \delta)=(0, 0)}&=S_\ld\Big(-2 J\cH^{0, 2}+2J\cH^{0, 1}S_\ld J\cH^{0, 1}\Big)S_\ld,\\
\p_\eps\p_\delta(\cL_{\eps, \beta_*+\delta} - \lb)^{-1}\vert_{(\eps, \delta)=(0, 0)}&=S_\ld\Big(-J\cH^{1, 1}+J\cH^{1, 0}S_\ld J\cH^{0, 1}+ J\cH^{0, 1}S_\ld J\cH^{1, 0}\Big)S_\ld,\\
\p^3_{\eps}(\cL_{\eps, \beta_*+\delta} - \lb)^{-1}\vert_{(\eps, \delta)=(0, 0)}&=6S_\lambda\Big[-J H^{3,0} + \big(JH^{1,0}S_\lambda JH^{2,0} + JH^{2,0}S_\lambda JH^{1,0}\big) \nonumber\\
&\hspace{1cm}- JH^{1,0}S_\lambda JH^{1,0}S_\lambda JH^{1,0}\Big]S_\lambda,   \\
\p^3_{\delta}(\cL_{\eps, \beta_*+\delta} - \lb)^{-1}\vert_{(\eps, \delta)=(0, 0)}&=6S_\lambda\Big[-J H^{0,3} + \big(JH^{0,1}S_\lambda JH^{0,2} + JH^{0, 2}S_\lambda JH^{0, 1}\big) \nonumber\\
&\hspace{1cm}- JH^{0,1}S_\lambda JH^{0, 1}S_\lambda JH^{0, 1}\Big]S_\lambda, \\
\p^2_{\delta}\p_\delta(\cL_{\eps, \beta_*+\delta} - \lb)^{-1}\vert_{(\eps, \delta)=(0, 0)}&=2S_\lambda\Big[-J H^{2,1} +JH^{1,0}S_\lambda JH^{1,1} + JH^{0,1}S_\lambda JH^{2,0} \nonumber \\
&\hspace{1cm} + JH^{2,0}S_\lambda JH^{0,1} + JH^{1,1}S_\lambda JH^{1,0}  \nonumber
\\
&\hspace{1cm}  -JH^{1,0}S_\lambda \big(JH^{1,0}S_\lambda JH^{0,1}+ JH^{0,1}S_\lambda JH^{1,0}\big)\Big]S_\lambda,\\ \label{dL:end}
\p_{\eps}\p^2_\delta(\cL_{\eps, \beta_*+\delta} - \lb)^{-1}\vert_{(\eps, \delta)=(0, 0)}&=2S_\lambda\Big[-J H^{1, 2} +JH^{0, 1}S_\lambda JH^{1,1} + JH^{1, 0}S_\lambda JH^{0, 2} \nonumber \\
&\hspace{1cm} + JH^{0, 2}S_\lambda JH^{1, 0} + JH^{1,1}S_\lambda JH^{0, 1}  \nonumber
\\
&\hspace{1cm}  -JH^{0, 1}S_\lambda \big(JH^{0, 1}S_\lambda JH^{1, 0}+ JH^{0,1}S_\lambda JH^{0, 1}\big)\Big]S_\lambda.
\end{align}
Since $S_\ld U_j=-(\ld-i\sigma)^{-1}U_j$, we obtain from \eqref{def:Peps} and \eqref{dL:start}-\eqref{dL:end} the following formulas for $P^{m, n}U_j$. 
 \allowdisplaybreaks
\begin{align}\label{Pmn:start}
P^{1, 0}U_j&= \frac1{2\pi i} \int_\Gamma -S_\ld J\cH^{1, 0}U_j \frac{d\lb}{\ld-i\sigma},\\
 P^{0, 1}U_j&= \frac1{2\pi i} \int_\Gamma -S_\ld J\cH^{0, 1}U_j \frac{d\lb}{\ld-i\sigma},\\
P^{2, 0}U_j&=\frac1{2\pi i} \int_\Gamma S_\ld \Big(-2J\cH^{2, 0}+2J\cH^{1, 0} J\cH^{1, 0}\Big)U_j\frac{d\lb}{\ld-i\sigma},\\
P^{0, 2}U_j&=\frac1{2\pi i} \int_\Gamma S_\ld \Big(-2J\cH^{0, 2}+2 J\cH^{0, 1}S_\ld J\cH^{0, 1}\Big)U_j\frac{d\lb}{\ld-i\sigma},\\
P^{1, 1}U_j&=\frac1{2\pi i} \int_\Gamma S_\ld\Big(- J\cH^{1, 1}+ J\cH^{1, 0}S_\ld J\cH^{0, 1}+ J\cH^{0, 1}S_\ld J\cH^{1, 0}\Big)U_j\frac{d\lb}{\ld-i\sigma},\\
P^{3,0}U_j &= \frac{1}{2\pi i}\int_\Gamma 6S_\lambda\Big[-J H^{3,0} + \big(JH^{1,0}S_\lambda JH^{2,0} + JH^{2,0}S_\lambda JH^{1,0}\big) \nonumber\\
&\hspace{1cm}- JH^{1,0}S_\lambda JH^{1,0}S_\lambda JH^{1,0}\Big]U_j \frac{d\lb}{\ld-i\sigma}, 
\\
P^{0,3}U_j &= \frac{1}{2\pi i}\int_\Gamma 6S_\lambda\Big[-J H^{0,3} + \big(JH^{0,1}S_\lambda JH^{0,2} + JH^{0, 2}S_\lambda JH^{0, 1}\big) \nonumber\\
&\hspace{1cm}- JH^{0,1}S_\lambda JH^{0, 1}S_\lambda JH^{0, 1}\Big]U_j \frac{d\lb}{\ld-i\sigma},\\
P^{2,1}U_j &= \frac{1}{2\pi i}\int_\Gamma 2S_\lambda\Big[-J H^{2,1} +JH^{1,0}S_\lambda JH^{1,1} + JH^{0,1}S_\lambda JH^{2,0} \nonumber \\
&\hspace{1cm} + JH^{2,0}S_\lambda JH^{0,1} + JH^{1,1}S_\lambda JH^{1,0}  \nonumber
\\
&\hspace{1cm}  -JH^{1,0}S_\lambda \big(JH^{1,0}S_\lambda JH^{0,1}+ JH^{0,1}S_\lambda JH^{1,0}\big)\Big]U_j \frac{d\lb}{\ld-i\sigma},\\ \label{Pmn:end}
P^{1,2}U_j &= \frac{1}{2\pi i}\int_\Gamma 2S_\lambda\Big[-J H^{1, 2} +JH^{0, 1}S_\lambda JH^{1,1} + JH^{1, 0}S_\lambda JH^{0, 2} \nonumber \\
&\hspace{1cm} + JH^{0, 2}S_\lambda JH^{1, 0} + JH^{1,1}S_\lambda JH^{0, 1}  \nonumber
\\
&\hspace{1cm}  -JH^{0, 1}S_\lambda \big(JH^{0, 1}S_\lambda JH^{1, 0}+ JH^{0,1}S_\lambda JH^{0, 1}\big)\Big]U_j \frac{d\lb}{\ld-i\sigma}.
\end{align}
To summarize, the third-order expansions of the eigenvectors $U_j^{\eps, \delta}$ are given by \eqref{expand:U}, where the coefficients $U_j^{(m, n)}$ are expressed in terms of $P^{m, n}$ as in \eqref{expand:U:start}-\eqref{expand:U:end} which are in turn calculated by the contour integrals in \eqref{Pmn:start}-\eqref{Pmn:end}.

\subsection{Expansions of the matrix $\textrm{L}_{\eps, \delta}$}
We recall that   $\textrm{L}_{\eps, \delta}$ given by \eqref{matrixL}  is the matrix representation of the linearized operator $\mathcal{L}_{\eps, \beta_*+\delta}$ with respect to the basis $\{V_j^{\eps, \delta}: j=1, 2\}$, where  $V_j^{\eps, \delta}=\frac{1}{\sqrt{\gamma_j}}U_j^{\eps, \delta}$. The expansion  \eqref{expand:U} of $U_j^{\eps, \delta}$ implies  
\bq\label{expand:V}
V^{\eps, \delta}_j=V_j+\sum_{m+n=1}^3\eps^m\delta^nV_j^{(m, n)}+O((\eps+\delta)^4)
\eq
with 
\[
V_j=\frac{1}{\sqrt{\gamma_j}}U_j,\quad V_j^{(m, n)}=\frac{1}{\sqrt{\gamma_j}}U_j^{(m, n)}.
\]

Combining \eqref{expand:V} with the expansion \eqref{expand:H} for $\cH_{\eps, \beta_*+ \delta}$, we  expand  the inner products appearing in $L_{\eps, \delta}$ \eqref{matrixL} as
\bq
\left(\mathcal{H}_{\varepsilon,\beta_*+\delta}V_j^{\varepsilon,\delta},V_k^{\varepsilon,\delta} \right)=(\cH^{0, 0}V_j, V_k)+\sum_{m+n=1}^3 \left(\mathcal{H}_{\varepsilon,\beta_*+\delta}V_j^{\varepsilon,\delta},V_k^{\varepsilon,\delta} \right)_{m,n}\eps^m\delta^n+O((\eps+\delta)^4),
\eq
where
 \allowdisplaybreaks
\begin{align}\label{innerproduct:start}
 \left(\mathcal{H}_{\varepsilon,\beta_*+\delta}V_j^{\varepsilon,\delta},V_k^{\varepsilon,\delta} \right)_{1, 0}&=(\cH^{0, 0}V_j, V^{(1, 0)}_k)+(\cH^{1, 0}V_j, V_k)+(\cH^{0, 0}V_j^{(1, 0)}, V_k),\\
 \left(\mathcal{H}_{\varepsilon,\beta_*+\delta}V_i^{\varepsilon,\delta},V_{j}^{\varepsilon,\delta} \right)_{0, 1}&=(\cH^{0, 0}V_j, V^{(0, 1)}_k)+(\cH^{0, 1}V_j, V_k)+(\cH^{0, 0}V_j^{(0, 1)}, V_k),\\
 \left(\mathcal{H}_{\varepsilon,\beta_*+\delta}V_i^{\varepsilon,\delta},V_{j}^{\varepsilon,\delta} \right)_{2, 0}&=(\cH^{0, 0}V_j, V^{(2, 0)}_k)+(\cH^{2, 0}V_j, V_k) +(\cH^{1, 0}V^{(1, 0)}_j, V_k)\nonumber\\
&\hspace{1cm} +(\cH^{0, 0}V^{(2, 0)}_j, V_k)+(\cH^{1, 0}V_j, V^{(1, 0)}_k)+(\cH^{0, 0}V_j^{(1, 0)}, V_k^{(1, 0)}),\\
 \left(\mathcal{H}_{\varepsilon,\beta_*+\delta}V_i^{\varepsilon,\delta},V_{j}^{\varepsilon,\delta} \right)_{0, 2}&=(\cH^{0, 0}V_j, V^{(0, 2)}_k)+(\cH^{0, 2}V_j, V_k) +(\cH^{0, 1}V^{(0, 1)}_j, V_k)\nonumber\\
&\hspace{1cm}+(\cH^{0, 0}V^{(0, 2)}_j, V_k)+(\cH^{0, 1}V_j, V^{(0, 1)}_k)+(\cH^{0, 0}V_j^{(0, 1)}, V_k^{(0, 1)}),\\
 \left(\mathcal{H}_{\varepsilon,\beta_*+\delta}V_i^{\varepsilon,\delta},V_{j}^{\varepsilon,\delta} \right)_{1, 1}&=(\cH^{1, 1}V_j, V_k)+(\cH^{1, 0}V_j^{(0, 1)}, V_k)+(\cH^{0, 1}V_j^{(1, 0)}, V_k)\nonumber\\
&\hspace{1cm} +(\cH^{0, 0}V_j, V_k^{(1, 1)}) +(\cH^{0, 0}V_j^{(1, 1)}, V_k) +(\cH^{1, 0}V_j, V_k^{(0, 1)})\nonumber\\
&\quad+(\cH^{0, 0}V_j^{(1, 0)}, V_k^{(0, 1)})+(\cH^{0, 1}V_j, V_k^{(1, 0)})+(\cH^{0, 0}V^{(0, 1)}_j, V_k^{(1, 0)}),\\
\left(\mathcal{H}_{\varepsilon,\beta_*+\delta} V^{\varepsilon,\delta}_j,V^{\varepsilon,\delta}_k \right)_{3,0} &=\Big(\mathcal{H}^{3,0} V_j + \mathcal{H}^{2,0}V_j^{(1,0)} + \mathcal{H}^{1,0}V_j^{(2,0)} + \mathcal{H}^{0,0}V_j^{(3,0)},V_k\Big) \nonumber \\
&\hspace{1cm}+\Big(\mathcal{H}^{2,0}V_j + \mathcal{H}^{1,0}V_j^{(1,0)} + \mathcal{H}^{0,0}V_j^{(2,0)},V_k^{(1,0)}   \Big) \nonumber \\
&\hspace{1cm}+ \Big(\mathcal{H}^{1,0}V_j + \mathcal{H}^{0,0}V_j^{(1,0)},V_k^{(2,0)}   \Big) + \Big(\mathcal{H}^{0,0}V_j,V_k^{(3,0)}  \Big), \\
\left(\mathcal{H}_{\varepsilon,\beta_*+\delta} V^{\varepsilon,\delta}_j,V^{\varepsilon,\delta}_k \right)_{0,3} &=\Big(\mathcal{H}^{0,3} V_j + \mathcal{H}^{0,2}V_j^{(0,1)} + \mathcal{H}^{0,1}V_j^{(0,2)} + \mathcal{H}^{0,0}V_j^{(0,3)},V_k\Big) \nonumber \\
&\hspace{1cm}+\Big(\mathcal{H}^{0,2}V_j + \mathcal{H}^{0,1}V_j^{(0,1)} + \mathcal{H}^{0,0}V_j^{(0,2)},V_k^{(0,1)}   \Big) \nonumber \\
&\hspace{1cm}+ \Big(\mathcal{H}^{0,1}V_j + \mathcal{H}^{0,0}V_j^{(0,1)},V_k^{(0,2)}   \Big) + \Big(\mathcal{H}^{0,0}V_j,V_k^{(0,3)}  \Big),\\
\left(\mathcal{H}_{\varepsilon,\beta_*+\delta} V^{\varepsilon,\delta}_j,V^{\varepsilon,\delta}_k \right)_{2,1} &= \Big(\mathcal{H}^{2,1}V_j + \mathcal{H}^{2,0}V_j^{(0,1)} + \mathcal{H}^{1,1}V_j^{(1,0)} + \mathcal{H}^{1,0}V_j^{(1,1)} + \mathcal{H}^{0,1}V_j^{(2,0)} + \mathcal{H}^{0,0}V_j^{(2,1)}, V_k \Big) \nonumber \\
&\hspace{1cm}+\Big(\mathcal{H}^{2,0}V_j + \mathcal{H}^{1,0}V_j^{(1,0)} + \mathcal{H}^{0,0}V_j^{(2,0)},V_k^{(0,1)} \Big) \nonumber \\
&\hspace{1cm}+\Big(\mathcal{H}^{1,1}V_j + \mathcal{H}^{1,0}V_j^{(0,1)} + \mathcal{H}^{0,1}V_j^{(1,0)} + \mathcal{H}^{0,0}V_j^{(1,1)},V_{n}^{(1,0)} \Big) \nonumber \\
&\hspace{1cm}+ \Big(\mathcal{H}^{1,0}V_j + \mathcal{H}^{0,0}V_j^{(1,0)},V_k^{(1,1)}  \Big) + \Big(\mathcal{H}^{0,1}V_j + \mathcal{H}^{0,0}V_j^{(0,1)},V_k^{(2,0)} \Big) \nonumber \\
&\hspace{1cm}+ \Big(\mathcal{H}^{0,0}V_j,V_k^{(2,1)} \Big), \\ \label{innerproduct:end}
\left(\mathcal{H}_{\varepsilon,\beta_*+\delta} V^{\varepsilon,\delta}_j,V^{\varepsilon,\delta}_k \right)_{1,2} &= \Big(\mathcal{H}^{1,2}V_j + \mathcal{H}^{1,1}V_j^{(0,1)} + \mathcal{H}^{1,0}V_j^{(0,2)} + \mathcal{H}^{0,2}V_j^{(1,0)} + \mathcal{H}^{0,1}V_j^{(1,1)} + \mathcal{H}^{0,0}V_j^{(1,2)}, V_k \Big) \nonumber \\
&\hspace{1cm}+\Big(\mathcal{H}^{0,2}V_j + \mathcal{H}^{0,1}V_j^{(0,1)} + \mathcal{H}^{0,0}V_j^{(0,2)},V_k^{(1,0)} \Big) \nonumber \\
&\hspace{1cm}+\Big(\mathcal{H}^{1,1}V_j + \mathcal{H}^{1,0}V_j^{(0,1)} + \mathcal{H}^{0,1}V_j^{(1,0)} + \mathcal{H}^{0,0}V_j^{(1,1)},V_{n}^{(0,1)} \Big) \nonumber \\
&\hspace{1cm}+ \Big(\mathcal{H}^{0,1}V_j + \mathcal{H}^{0,0}V_j^{(0,1)},V_k^{(1,1)}  \Big) + \Big(\mathcal{H}^{1,0}V_j + \mathcal{H}^{0,0}V_j^{(1,0)},V_k^{(0,2)} \Big) \nonumber \\
&\hspace{1cm}+ \Big(\mathcal{H}^{0,0}V_j,V_k^{(1,2)} \Big).
\end{align}

\section{Proof of Theorem \ref{theo:main}}
\begin{lemm} \label{freqProp}
The purely imaginary matrix $\textrm{L}_{\eps, \delta}$ can be written as  
\begin{align}
\textrm{L}_{\varepsilon,\delta} = 
\begin{pmatrix} i\sigma & 0 \\ 0 & i\sigma \end{pmatrix} + i\begin{pmatrix} A &B \\ -B& C \end{pmatrix}, 
\end{align}
where $i\sigma$ is the repeated eigenvalue of the unperturbed operator $\mathcal{L}_{0,\beta_*}$ and
where $A$, $B$, and $C$ are real analytic functions of $(\eps, \delta)$ in a neighborhood of $(0,0)$ that have the expansions 
\bq\label{expand:ABC}
\begin{aligned}
A &= a_{0,1}\delta + a_{2,0}\varepsilon^2 + a_{0,2}\delta^2 + a_{2,1}\varepsilon^2\delta +a_{0,3}\delta^3 +a_{4,0}\eps^4   + O(\delta(|\eps|^3+|\delta|^3)),\\
B&=b_{3,0}\varepsilon^3 + O(|\eps|^4+|\delta|^4),\\
C &=c_{0,1}\delta + c_{2,0}\varepsilon^2 + c_{0,2}\delta^2 + c_{2,1}\varepsilon^2\delta +c_{0,3}\delta^3 +c_{4,0}\eps^4 +  O(\delta (|\eps|^3+|\delta|^3)).
\end{aligned}
\eq
\end{lemm}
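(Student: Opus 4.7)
The plan is to prove the lemma in three stages: (a) obtain the decomposition $\textrm{L}_{\eps,\delta} = i\sigma I + i\begin{pmatrix}A & B \\ -B & C\end{pmatrix}$ with real-analytic $A, B, C$; (b) extract parity in $\eps$ via a $\pi$-translation symmetry; (c) upgrade the bound $B = O(\eps)$ coming from parity to $B = O(\eps^3)$ via a Fourier mode count. For (a), I will use the reversibility $R\cH_{\eps,\beta} = \cH_{\eps,\beta}R$ from \eqref{reverse H} together with $RV_j^{\eps,\delta} = V_j^{\eps,\delta}$ to see that each $(\cH_{\eps,\beta_*+\delta} V_j^{\eps,\delta}, V_k^{\eps,\delta})$ is real; combined with \eqref{L:reversediagonal} this yields the desired real decomposition. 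The identification $\textrm{L}_{0,0} = i\sigma I$ follows from $\cH^{0,0} U_j = -i\sigma J U_j$ and \eqref{JUU:0}, giving $A(0,0) = B(0,0) = C(0,0) = 0$. Real analyticity of $A, B, C$ is then inherited from the analyticity of $\cH_{\eps,\beta_*+\delta}$ (by \eqref{analyticity:cL}) and of the Kato basis $V_j^{\eps,\delta} = \cK_{\eps,\delta} V_j / \sqrt{\gamma_j}$.

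For (b), let $(T_\pi f)(x) := f(x+\pi)$. Inspection of the expansions \eqref{expand:star} and \eqref{expand:p}--\eqref{expand:qzeta}, together with the Fourier formulas \eqref{form:R1}--\eqref{form:R3} for $R_1, R_2, R_3$, shows that the $\eps^m$-coefficient of each entry of $\cH_{\eps,\beta}$ is a trigonometric polynomial whose Fourier modes $k$ satisfy $k \equiv m \pmod 2$. Consequently $T_\pi \cH_{\eps,\beta} T_\pi^{-1} = \cH_{-\eps,\beta}$ and $T_\pi \cL_{\eps,\beta} T_\pi^{-1} = \cL_{-\eps,\beta}$. Direct calculation gives $T_\pi U_1 = -U_1$ and $T_\pi U_2 = U_2$; conjugating the resolvent integral \eqref{def:Peps} yields $T_\pi P_{\eps,\delta} T_\pi^{-1} = P_{-\eps,\delta}$, whence $T_\pi V_1^{\eps,\delta} = -V_1^{-\eps,\delta}$ and $T_\pi V_2^{\eps,\delta} = V_2^{-\eps,\delta}$. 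Using the unitarity of $T_\pi$ in the inner products defining $\textrm{L}_{\eps,\delta}$ gives $A, C$ even in $\eps$ and $B$ odd in $\eps$, killing the $\eps, \eps\delta, \eps\delta^2, \eps^3$ terms in $A$ and $C$ as well as all even-in-$\eps$ terms in $B$.

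For (c), I will count Fourier modes. From \eqref{Hexpansion:eps} and \eqref{form:R1}--\eqref{form:R3}, each $\cH^{j,\ell}$ shifts Fourier modes by at most $j$: its multiplicative coefficients are trigonometric polynomials of degree $\leq j$, and $R_0, R_1, R_2, R_3$ shift modes by $0, \pm 1, \{0,\pm 2\}, \{\pm 1,\pm 3\}$ respectively. The resolvent $S_\lambda = (\cL_{0,\beta_*} - \lambda)^{-1}$ is block-diagonal in the Fourier basis and so preserves modes. Induction on the Kato formulas \eqref{Pmn:start}--\eqref{Pmn:end} then gives that $V_j^{(m,n)}$ is supported in Fourier modes within distance $m$ of the base mode of $V_j$. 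In $B = \tfrac{1}{4\pi}(\cH_{\eps,\beta_*+\delta} V_1^{\eps,\delta}, V_2^{\eps,\delta})$, the contribution of total $\eps$-order $j+m_1+m_2$ coming from $(\cH^{j,\ell} V_1^{(m_1,n_1)}, V_2^{(m_2,n_2)})$ has Fourier integrand whose modes lie within $j+m_1+m_2$ of the base gap $|1-(-2)|=3$, so the integral vanishes unless $j+m_1+m_2 \geq 3$. Combined with oddness, this forces $B = b_{3,0}\eps^3 + O(\eps^3\delta + \eps^5 + \cdots)$; the remainder is $O(\eps^4 + \delta^4)$ by Young's inequality $\eps^3\delta \leq \tfrac{3}{4}\eps^4 + \tfrac{1}{4}\delta^4$ together with the trivial bound $\eps^a \delta^b = O(\eps^4 + \delta^4)$ for $a+b \geq 4$ and small $(\eps,\delta)$. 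The missing $\eps^2\delta^2$ and $\delta^4$ contributions to $A, C$ are absorbed into $O(\delta(|\eps|^3 + |\delta|^3))$ in the same way.

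The hard part will be stage (c): one must carefully track Fourier supports through the iterated resolvents $(S_\lambda J \cH^{m',n'})^k$ in the projection formulas \eqref{Pmn:start}--\eqref{Pmn:end} and through the many inner-product expansions in \eqref{innerproduct:start}--\eqref{innerproduct:end}. Fortunately the count is robust --- no delicate cancellation is invoked, only the explicit spectral widths of $R_1, R_2, R_3$ from Proposition \ref{prop:Rj} --- so once the bookkeeping is set up correctly the constraint $j+m_1+m_2 \geq 3$ emerges automatically from the supports.
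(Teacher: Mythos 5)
Your proposal is correct and takes essentially the same route as the paper: reversibility together with \eqref{L:reversediagonal} and joint analyticity give the purely imaginary form and the real expansions, and the vanishing of the unlisted coefficients is proved by tracking the Fourier-mode supports of the Kato corrections $V_j^{(m,n)}$ through \eqref{Pmn:start}--\eqref{Pmn:end} together with the mode shifts of the operators $\cH^{j,\ell}$, which is precisely the paper's wave-number bookkeeping (its table of spanning wave numbers). Your only deviation --- deriving the even/odd-in-$\eps$ structure of $A$, $C$, $B$ from the half-period translation symmetry $T_\pi$ of the Stokes family rather than reading it off the same mode count --- encodes identical parity information and is a harmless, slightly tidier shortcut.
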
 
\begin{proof} 
The form \eqref{freqProp} follows from the fact that 
 the reverse diagonal satisfies \eqref{L:reversediagonal}, that is, $\left(\textrm{L}_{\varepsilon,\delta}\right)_{12}=-\left(\textrm{L}_{\varepsilon,\delta}\right)_{21}$. Moreover, $A$, $B$, and $C$  admit expansions in $\varepsilon$ and $\delta$ around $(0,0)$ from the joint analyticity of $\textrm{L}_{\varepsilon,\delta}$ around $(0,0)$, and the coefficients $a_{i,j}$, $b_{i,j}$, and $c_{i,j}$ of these expansions are real-valued since $\textrm{L}_{\eps, \delta}$ is purely imaginary. To prove Theorem \ref{theo:main}, $A$, $B$, and $C$ must be expanded to third order in $\varepsilon$ and $\delta$ and also include terms proportional to $\varepsilon^4$ in $A$ and $C$. Later, we will see that these fourth-order terms in $A$ and $C$ drop from our calculations. The main purpose of the present lemma is to show that, up to third order in $\varepsilon$ and $\delta$,  
 the coefficients not listed explicitly in the expansions of $A$, $B$, and $C$ above vanish identically.
 
To be specific, we will prove that all the terms with odd powers of $\eps$ in $A$ and $C$ vanish, as well as   
all the terms in $B$ up to order 3  except for $b_{3,0}$. 
These properties come from 
substituting the expansions \eqref{innerproduct:start}-\eqref{innerproduct:end} of the inner products $(\mathcal{H}_{\varepsilon,\beta_*+\delta}V_j^{\varepsilon,\delta},V_k^{\varepsilon,\delta})$ into the matrix $L_{\eps, \delta}$ \eqref{matrixL}.  
 Note that all eigenfunction coeffiecients $V_j^{(k,\ell)}$ (for $j \in \{1,2\}$ and $k, \ell \geq 0$) 
 defined by \eqref{expand:V}
 take the form of a finite Fourier series that span a small number of wave numbers. In particular, the zeroth-order corrections $V_1$ and $V_2$ span the single wave numbers $\{1\}$ and $\{-2\}$, respectively. In view of the definitions of the operators $R_{k, \ell}$ and $\cH^{k, \ell}$ given in \eqref{def:Sjl} and \eqref{def:Hjl} and of the operators $R_k$ given in \eqref{form:R1}-\eqref{form:R3},  we deduce that $\cH^{0, \ell}$ acts on $V_j^{k,\ell}$ in such a way that preserves its spanning set of wave numbers, while $\cH^{1, \ell}$ modulates (shifts) these wave numbers by $\pm1$, $\cH^{2, \ell}$ modulates the wave numbers by another $\pm1$, and $\cH^{3, \ell}$ modulates the wave numbers by yet another $\pm1$. Using the projection operators \eqref{Pmn:start}-\eqref{Pmn:end} and the formulas for the eigenfunction corrections \eqref{expand:U:start}-\eqref{expand:U:end}, we calculate the spanning set of wave numbers for each eigenfunction correction $V_j^{(k,\ell)}$ up to third order in $\varepsilon$ and $\delta$. These calculations are tedious, but straightforward. The full details of the calculation are provided in our Mathematica file. We summarize our results in the table below.
\begin{center}
\begin{tabular}{ p{2.5cm}|p{2.5cm}||p{2.5cm}|p{2.5cm}  }
 \multicolumn{2}{c}{$V_1^{(k,\ell)}$ Wave Numbers} & \multicolumn{2}{c}{$V_2^{(k,\ell)}$ Wave Numbers}  \\ 
 \hline \hline
 $V_1$ & $\{1\}$ & $V_2$ & $\{-2\}$\\
 \hline
  $V_1^{(1,0)}$ & $\{0,2\}$ & $V_2^{(1,0)}$ & $\{-3,-1\}$\\
 \hline
   $V_1^{(0,1)}$ & $\{1\}$ & $V_2^{(0,1)}$ & $\{-2\}$\\
   \hline
     $V_1^{(2,0)}$ & $\{-1,1,3\}$ & $V_2^{(2,0)}$ & $\{-4,-2,0\}$\\
 \hline
   $V_1^{(1,1)}$ & $\{0,2\}$ & $V_2^{(1,1)}$ & $\{-3,-1\}$\\
 \hline
   $V_1^{(0,2)}$ & $\{1\}$ & $V_2^{(0,2)}$ & $\{-2\}$\\
   \hline
     $V_1^{(3,0)}$ & $\{-2,0,2,4\}$ & $V_2^{(3,0)}$ & $\{-5,-3,-1,1\}$\\
 \hline
   $V_1^{(2,1)}$ & $\{-1,1,3\}$ & $V_2^{(2,1)}$ & $\{-4,-2,0\}$\\
 \hline
   $V_1^{(1,2)}$ & $\{0,2\}$ & $V_2^{(1,2)}$ & $\{-3,-1\}$\\
 \hline
   $V_1^{(0,3)}$ & $\{1\}$ & $V_2^{(0,3)}$ & $\{-2\}$\\
 \hline
 \hline
\end{tabular}
\end{center}

Given the table of wave numbers above as well as the modulational effect of the operators 
$\mathcal{H}^{k,\ell}$, we can deduce the wave numbers present in each term of the inner-product expansions defined in \eqref{innerproduct:start}.  
As mentioned above, $\mathcal{H}^{(0,\ell)}$ induces no change in the wave numbers, 
$\mathcal{H}^{(1,\ell)}$ modulates the wave numbers by $\pm 1$, 
$\mathcal{H}^{(2,\ell)}$ modulates the wave numbers by $\pm 2$, and 
$\mathcal{H}^{(3,\ell)}$ changes the wave numbers by $\pm3, \pm1$.  
For the terms in the off-diagonal $B$, $\mathcal{H}^{k,\ell}$ must contribute a shift of wave number by 3 
in order to obtain a non-zero contribution.  
For the terms in the diagonal entries $A$ and $C$, $\mathcal{H}^{k,\ell}$ must contribute a shift of wave number by 0 or 2  
in order to obtain a non-zero contribution. 

As an example, let us examine the $\eps$ term of the inner-product $\left(\mathcal{H}_{\varepsilon,\beta_*+\delta}V_1^{\varepsilon,\delta},V_1^{\varepsilon,\delta} \right)$, which is given explicitly by  
\[
(\mathcal{H}_{\varepsilon,\beta_*+\delta}V_1^{\varepsilon,\delta},V_1^{\varepsilon,\delta})_{1, 0}=(\cH^{0, 0}V_1, V^{(1, 0)}_1)+(\cH^{1, 0}V_1, V_1)+(\cH^{0, 0}V_1^{(1, 0)}, V_1).
\]
This term is proportional to the coefficient $a_{1,0}$ in the expansion of the $A$ entry of $\textrm{M}_{\varepsilon,\delta}$. The {\it first} inner-product on the right-hand side above must vanish. Indeed, $\mathcal{H}^{0,0}V_1$ consists of only the wave number $\{1\}$, while $V_1^{(1,0)}$ consists of wave numbers $\{0,2\}$. As these two sets have no intersection, the inner-product must vanish. The {\it second} inner-product also vanishes, as $\mathcal{H}^{(1,0)}V_1$ has wave numbers $\{0,2\}$, while $V_1$ has wave number $\{1\}$. Finally, the {\it third} inner-product vanishes for similar reasons as the first inner-product. Thus, we conclude that the $\varepsilon$ term of the inner-product $\left(\mathcal{H}_{\varepsilon,\beta_*+\delta}V_1^{\varepsilon,\delta},V_1^{\varepsilon,\delta} \right)$ vanishes identically and, as a consequence, $a_{1,0} = 0$. 
A similar analysis holds for the other coefficients. 
\end{proof} 

By definition, the coefficients  $a_{i,j}$, $b_{i,j}$, and $c_{i,j}$ depend only on the  parameter $\beta_*$. Using the formulas derived in Section \ref{section:expansions} and the aid of Mathematica's symbolic computation, we obtain {\it explicit} expressions of these coefficients as functions of $\beta_*$. Since these expressions are very cumbersome, we do not display them here, but if we approximate $\beta_* \approx 2.7275211479$, then we can numerically evaluate the coefficients to find 
\begin{center}
\begin{tabular}{c || r} 
 \hline \hline
 $a_{0,1}$ & -0.0931912038 \\ 
 \hline
 $a_{2,0}$ & -0.4972909772  \\ 
 \hline
 $a_{0,2}$ & 0.0093753194 \\ \hline
 $a_{2,1}$ & -0.0081152843 \\
 \hline
 $a_{0,3}$ & -0.0014671778 \\
 \hline
 $b_{3,0}$ & -0.4947603203 \\
 \hline
  $c_{0,1}$ & 0.0598478709 \\
 \hline
 $c_{2,0}$ & 1.08625864892 \\
 \hline
  $c_{0,2}$ & -0.0033359912\\
 \hline
 $c_{2,1}$ & -0.0002576496 \\
 \hline
  $c_{0,3}$& 0.0002892588  \\  
 \hline
 \hline
\end{tabular}
\end{center}
 We do not include the numerical values of $a_{4,0}$ and $c_{4,0}$, as both coefficients will eventually drop from our calculations. The list above suggests that $a_{0, 1}\ne c_{0, 1}$ and $b_{3, 0}\ne 0$. These facts will be crucial to Theorem \ref{theo:main}, so we prove both in the following lemma. 
\begin{lemm}\label{lemm:b30}
$a_{0,1} \neq c_{0,1}$ and $b_{3, 0}\ne 0$. 
\end{lemm}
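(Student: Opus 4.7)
The plan is to establish the two claims separately, exploiting the fact that one lives along the slice $\varepsilon=0$ and the other along the slice $\delta=0$.

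\emph{The inequality $a_{0,1}\ne c_{0,1}$.} The plan is to read off both coefficients from the slice $\varepsilon=0$, where the analysis reduces to a scalar Fourier computation. Since $\cL_{0,\beta_*+\delta}$ is a Fourier multiplier, both the spectral projector $P_{0,\delta}$ and the Kato transformation $\cK_{0,\delta}$ preserve each individual Fourier block. Consequently $V_1^{0,\delta}$ stays supported at wavenumber $1$ and $V_2^{0,\delta}$ at wavenumber $-2$, so $\textrm{L}_{0,\delta}$ is already diagonal and its diagonal entries coincide with the two simple eigenvalues of $\cL_{0,\beta_*+\delta}$ that bifurcate from $i\sigma$, namely $i\bigl(1-(1+\beta_*+\delta)^{1/4}\bigr)$ and $i\bigl(-2+(4+\beta_*+\delta)^{1/4}\bigr)$. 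Matching these with $i(\sigma+A)|_{\varepsilon=0}$ and $i(\sigma+C)|_{\varepsilon=0}$ and differentiating in $\delta$ yields
\[
a_{0,1}=-\frac{1}{4\gamma_1^{3}},\qquad c_{0,1}=\frac{1}{4\gamma_2^{3}},
\]
which have opposite signs and so are unequal.

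\emph{The inequality $b_{3,0}\ne 0$.} The plan is to evaluate $b_{3,0}$ as an explicit rational function of $\gamma_1,\gamma_2$, using the slice $\delta=0$ together with the third-order machinery from Sections \ref{Sec:R} and \ref{section:expansions}. The wavenumber bookkeeping already used in the proof of the preceding lemma kills $b_{0,0}$, $b_{1,0}$ and $b_{2,0}$: since $V_1$ is supported at $\{1\}$, $V_2$ at $\{-2\}$, and $\cH^{j,0}$ shifts Fourier support by at most $j$ units, a total shift of $-3$ is unattainable in fewer than three $\varepsilon$-operations. At order $\varepsilon^{3}$ one assembles the surviving terms in the expansion of $(\cH_{\varepsilon,\beta_*}V_1^{\varepsilon,0},V_2^{\varepsilon,0})_{3,0}$, namely contributions involving $\cH^{3,0}V_1$, $\cH^{2,0}V_1^{(1,0)}$, $\cH^{1,0}V_1^{(2,0)}$, $\cH^{0,0}V_1^{(3,0)}$ and their mirror images with $V_2^{(m,0)}$ in the second slot, substitutes the explicit Fourier-multiplier coefficients $C_k^{\pm},B_k^{-,0,+},D_k^{\pm 3,\pm 1}$ from Proposition \ref{prop:Rj}, and reduces each Kato correction $V_j^{(m,0)}$ to residues at the single pole $\lambda=i\sigma$ inside $\Gamma$ via \eqref{Pmn:start}--\eqref{Pmn:end}. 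The outcome is a closed-form rational expression for $b_{3,0}$ in $\gamma_1$ and $\gamma_2=3-\gamma_1$.

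\emph{Main obstacle and conclusion.} The principal difficulty is purely algebraic: the order-$\varepsilon^{3}$ sum contains several dozen terms, with denominators of the form $\Omega(k)\pm\Omega(k')+m$ that do not cancel in any transparent manner. I plan to carry out the symbolic simplification in the companion Mathematica notebook, as the authors do. Rigorous certification of non-vanishing then uses the resonance data: combining $\gamma_1+\gamma_2=3$ (from \eqref{resonancecondition}) with $\gamma_2^{4}-\gamma_1^{4}=3$ shows that $\gamma_1$ is a root of the irreducible cubic $2\gamma_1^{3}-9\gamma_1^{2}+18\gamma_1-13=0$, so $b_{3,0}\in\mathbb{Q}(\gamma_1)$ is a definite algebraic number in a three-dimensional $\mathbb{Q}$-vector space. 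One certifies $b_{3,0}\ne 0$ either by reducing the symbolic expression modulo this minimal polynomial and checking the representative in $\mathbb{Q}\oplus\mathbb{Q}\gamma_1\oplus\mathbb{Q}\gamma_1^{2}$ is non-zero, or by interval-arithmetic evaluation; the value $b_{3,0}\approx -0.4948$ is comfortably separated from $0$.
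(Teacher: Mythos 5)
Your proposal is correct, and it splits into one genuinely different argument and one that matches the paper. For $a_{0,1}\neq c_{0,1}$ you argue on the slice $\varepsilon=0$: since $\cL_{0,\beta_*+\delta}$ is a Fourier multiplier, the spectral projection and Kato map preserve each Fourier mode, so $\textrm{L}_{0,\delta}$ is diagonal with entries equal to the two simple eigenvalues $i\big(1-(1+\beta_*+\delta)^{1/4}\big)$ and $i\big(-2+(4+\beta_*+\delta)^{1/4}\big)$, whence $a_{0,1}=-\tfrac{1}{4\gamma_1^{3}}$ and $c_{0,1}=\tfrac{1}{4\gamma_2^{3}}$ have opposite signs; these values agree with the paper's numerical table. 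The paper instead computes the product $a_{0,1}c_{0,1}$ from the inner-product machinery (in the companion Mathematica file) and observes it is negative --- the same opposite-sign conclusion, but your derivation is more elementary and bypasses the symbolic computation entirely, which is a genuine simplification. For $b_{3,0}\neq 0$ your plan is essentially the paper's: assemble the order-$\varepsilon^{3}$ terms, let Mathematica produce a closed form in $\gamma_1$ using $\gamma_2=3-\gamma_1$ and the cubic $2\gamma_1^{3}-9\gamma_1^{2}+18\gamma_1-13=0$, then certify non-vanishing exactly; the paper's argument-principle/residue computation for $g(\xi)=p(\xi)^2-q(\xi)^2(\xi^4-1)$ is precisely your ``reduce modulo the minimal polynomial'' check after squaring away the radical. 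Two small corrections to that part: $b_{3,0}$ involves $\sqrt{\gamma_1^{4}-1}=\sqrt{\beta_*}$, so it lies in the quadratic extension $\mathbb{Q}(\gamma_1)\big(\sqrt{\gamma_1^{4}-1}\big)$ rather than in the three-dimensional space $\mathbb{Q}(\gamma_1)$ as you state --- you must square (as the paper does) or work in that degree-at-most-six field before reducing modulo the cubic; and the fallback remark that $b_{3,0}\approx-0.4948$ is ``comfortably separated from $0$'' is not by itself a proof unless the evaluation is carried out with certified interval arithmetic, so the exact algebraic reduction should be regarded as the actual certificate.
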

\begin{proof}
 To see that $a_{0,1} \neq c_{0,1}$, consider the product $a_{0,1}c_{0,1}$. A direct calculation given in the companion Mathematica file shows
\begin{align}
a_{0,1} c_{0,1} = \frac{\Omega'(-2)\Omega'(1)}{4(2+\sigma)(\sigma -1)},\quad \Omega(k)=(k^2+\beta)^\mez. 
\end{align}
Because $k\Omega'(k) >0$ for all $k \in \mathbb{R}\setminus\{0\}$ and $-2 < \sigma < 1$, it follows that this product is negative. Necessarily, $a_{0,1} \neq c_{0,1}$. 

As for the coefficient $b_{3,0}$, it is an explicit but cumbersome function of $\beta_*$. It is convenient in what follows to rewrite $b_{3,0}$ in terms of $\gamma_1=(1+\beta_*)^\frac{1}{4}$. Doing so and using the resonance condition
\begin{align}
\left(4+\beta_* \right)^{1/4} = 3 - (1 + \beta_*)^{1/4}, \label{res_cond_1}
\end{align}
together with Mathematica's powerful symbolic algebra calculator, we can express $b_{3,0}$ more compactly as
\begin{align}
b_{3,0} = -\frac{(1+\gamma_1^2)\left(p(\gamma_1) + q(\gamma_1)\sqrt{\gamma_1^4 - 1} \right)}{r(\gamma_1)}, \label{b30_expression}
\end{align}
where
\begin{align}
p(\gamma_1) &= 66632 - 283193 \gamma_1 + 552058 \gamma_1^2 - 791360 \gamma_1^3 + 956648 \gamma_1^4 -  941661 \gamma_1^5 + 714646 \gamma_1^6 - 392544 \gamma_1^7 \nonumber \\ &\hspace{1.5cm} + 145056 \gamma_1^8 - 30331 \gamma_1^9 + 622 \gamma_1^{10} + 1440 \gamma_1^{11} - 336 \gamma_1^{12} + 17 \gamma_1^{13} + 2 \gamma_1^{14},\\
q(\gamma_1) &= -6656 - 102903 \gamma_1 + 356580 \gamma_1^2 - 545119 \gamma_1^3 + 508794 \gamma_1^4 -  312190 \gamma_1^5 + 126944 \gamma_1^6 - 32062 \gamma_1^7 \nonumber \\ &\hspace{1.5cm} + 3812 \gamma_1^8 + 213 \gamma_1^9 - 
 100 \gamma_1^{10} - 3 \gamma_1^{11} + 2 \gamma_1^{12}, \\
r(\gamma_1) &= 64 \sqrt{-\left(\left(\gamma_1-3\right) \gamma_1\right)} \left(\left(\gamma_1-3\right) \gamma_1+5\right) \left(\left(\gamma_1-3\right) \gamma_1+6\right) \left(\sqrt{\gamma_1^4-1}+\left(\gamma_1-6\right) \gamma_1+11\right) \nonumber \\ &\quad\cdot \left(\gamma_1 \left(\sqrt{\gamma_1^4-1}+\left(\gamma_1-1\right) \gamma_1+1\right)-1\right){}^2.
\end{align}
It follows from \eqref{res_cond_1}  that $\gamma_1$ is the only positive real solution of $2\gamma_1^3 - 9\gamma_1^2 + 18\gamma_1 - 13 = 0$. A quick application of the intermediate value theorem shows that $1< \gamma_1 <2$. In particular, all factors in $r_1(\gamma_1)$ are positive, hence  $r(\gamma_1)>0$ and \eqref{b30_expression} is well-defined. 

Arguing by contradiction, let us suppose that $b_{3, 0}=0$. Then $p(\gamma_1) + q(\gamma_1)\sqrt{\gamma_1^4 - 1}=0$, so that  the polynomial 
\[
g(\xi) = p(\xi)^2 -q(\xi)^2\left(\xi^4 - 1 \right)
\]
must have a zero at $\gamma_1$. According to the argument principle, we must have
\begin{align*}
\int_{\partial B} \frac{g'(\xi)}{g(\xi)} d\xi \neq 0,
\end{align*}
where $B$ is any open ball of sufficiently small radius that contains only the root of $g$ at $\gamma_1$. On the other hand, one can have Mathematica compute the Laurent expansion of $g'(\xi)/g(\xi)$ about $\xi = \gamma_1$. It is important to emphasize that this calculation is purely algebraic and does not rely on any numerical computations: we use Mathematica only to avoid tedious algebra. The Laurent expansion of $g'(\xi)/g(\xi)$ has no term of order $1/(\xi - \gamma_1)$. Thus, the residue theorem implies
\begin{align}
\int_{\partial B} \frac{g'(\xi)}{g(\xi)} d\xi = 2\pi i \underset{\xi = \gamma_1}{\textrm{Res}}\left(\frac{g'(\xi)}{g(\xi)} \right) = 0.
\end{align}
This contradiction proves that $b_{3,0} \neq 0$.
\end{proof}

With the appropriate lemmas in place, we are now ready to prove Theorem \ref{theo:main}. 

\large{ \textbf{ Proof of Theorem \ref{theo:main}}}
\vspace{.5cm}\\
The characteristic polynomial of $\textrm{L}_{\eps, \delta} - i\sigma I$ is 
\bq\label{charL} 
\det (\textrm{L}_{\eps, \delta}- i\sigma I -\lambda I) = \ld^2-i(A+C)\ld -AC-B^2 ,
\eq
whose discriminant is the real-valued function
  \bq\label{def:Deltaed}
  \Delta(\varepsilon,\delta) =-(A-C)^2+4B^2.
  \eq
  The eigenvalues of $\textrm{L}_{\eps, \delta}$ are 
\bq\label{ld:ABCDelta}
\lambda_\pm = i\Big(\sigma +\frac{1}2 (A+C) \Big) \pm \frac12\sqrt{\Delta(\eps,\delta)},  
\eq 
where $ A+C$ is the trace of $\text{L}_{\varepsilon,\delta}-i\sigma I$ and so is real-valued. Since $A$, $B$, and $C$ are real analytic in $(\eps, \delta)$ in a neighborhood of $(0, 0)$, so are $A+C$ and 
$\Delta(\eps, \delta)$. Moreover, it follows from the expansions of $A$, $B$, and $C$ that $\Delta(\varepsilon,\delta) = \cO(\delta^2)$ and $A+C = \cO(\delta)$ as $(\varepsilon,\delta) \rightarrow (0,0)$. We will complete the proof of Theorem \ref{theo:main} by {\it showing that the characteristic polynomial \eqref{charL} has a root with positive real part}, that is, $\Delta(\eps,\delta)>0$ for suitably small $\eps$ and $\delta$.

 If we expand $\Delta(\eps, \delta)$ about $(\eps, \delta)=(0, 0)$ using \eqref{expand:ABC} only up to the third-order order terms, the remainder in $(A-C)^2$ will be of order $O(|\eps|^5+|\delta|^5)$, which is lower than the main term $4b_{3,0}^2\varepsilon^6$ coming from $B^2$. This may suggest that one needs to further expand $A$ and $C$ up to order  $O(|\eps|^6+|\delta|^6)$, which would warrant many more heavy calculations. Fortunately, a formal application of dominant balance to $\Delta(\varepsilon,\delta)$ suggests that $\delta =O(\eps^2)$ is the correct distinguished limit that will achieve $\Delta(\varepsilon,\delta) > 0$ for small $\varepsilon$. As a result, it will be sufficient in this work to expand $A$ and $C$ just beyond third order in $\varepsilon$ and $\delta$ to include terms proportional to $\varepsilon^4$, as written in Lemma \ref{freqProp}. All remaining fourth-order terms in the expansions of $A$ and $C$ become proportional to $\varepsilon^7$ in the expansion of $\Delta(\varepsilon,\delta)$ once $\delta = O(\varepsilon^2)$, which is higher order than the crucial term $4b_{3,0}^2\varepsilon^6$. 
 
Taking inspiration from dominant balance, we renormalize $\delta$ as follows:
\bq\label{delta:eps}
\delta =\eps^2 \ka
\eq
so that expansions \eqref{expand:ABC} become
\bq\label{expand:ABC:2}
\begin{aligned}
A &= a_{0,1} \eps^2 \ka + a_{2,0}\varepsilon^2 + a_{0,2}\eps^4 \ka^2 + a_{2,1}\varepsilon^4\ka+a_{4, 0}\eps^4+ O(\eps^5),\\
B&=b_{3,0}\varepsilon^3 +O(\eps^4),\\
C &=c_{0,1}\eps^2 \ka + c_{2,0}\varepsilon^2 + c_{0,2}\eps^4 \ka^2 + c_{2,1}\varepsilon^4\ka +c_{4, 0}\eps^4+ O(\eps^5),
\end{aligned}
\eq
where the remainders depend on $\ka$ to be chosen.  We now define
\begin{align*}
\Delta'(\varepsilon,\kappa) = \Delta(\varepsilon,\varepsilon^2\kappa).
\end{align*}
Substituting the expansions \eqref{expand:ABC:2} into the determinant $-(A-C)^2+4B^2$ and rearranging terms in increasing powers of $\varepsilon$, we obtain the following expansion for $\Delta'(\varepsilon,\kappa)$: 
\begin{align} \label{Delta':order4}
 \Delta'(\varepsilon,\ka) &=-\left[(a_{0, 1}-c_{0, 1})\ka+(a_{2, 0}-c_{2, 0})\right]^2\eps^4\nonumber\\
 &\hspace{1cm}-2(a_{4, 0}-c_{4, 0})\left[(a_{0, 1}-c_{0, 1})\ka +(a_{2, 0}-c_{2, 0})\right]\eps^6 +(E(\ka)+ 4b_{3,0}^2)\eps^6+ O(\eps^7),
 \end{align}
 where
 \bq
 \begin{aligned}
 E(\ka)&=   -2(a_{0,1}-c_{0,1})(a_{0,2} - c_{0,2})\ka^3 -2\big((a_{0,2}-c_{0,2})(a_{2,0}-c_{2,0}) + (a_{0,1}-c_{0,1})(a_{2,1}-c_{2,1}) \big)\ka^2\\
 &\hspace{1cm}- 2(a_{2,0}-c_{2,0})(a_{2,1}-c_{2,1})\ka\\
   &=-2\ka\left[(a_{0,1}-c_{0,1})\ka +(a_{2,0}-c_{2,0})\right] \left[(a_{0,2}-c_{0,2})\ka+(a_{2,1}-c_{2,1})\right].
 \end{aligned}
 \eq
 This expansion shows that $\Delta'(\eps, \ka)=O(\eps^4)$ as $\varepsilon \rightarrow 0$. Therefore, if we define
 \bq\label{def:Delta''}
 \Delta''(\eps, \ka):=\eps^{-4}\Delta'(\eps, \ka)\equiv \eps^{-4}\Delta(\eps, \eps^2\ka),
 \eq
 then \eqref{Delta':order4} implies 
\begin{align} \label{Delta'':0}
 \Delta''(\varepsilon,\ka) &=-\left[(a_{0, 1}-c_{0, 1})\ka+(a_{2, 0}-c_{2, 0})\right]^2\nonumber\\
 &\hspace{1cm}-2(a_{4, 0}-c_{4, 0})\left[(a_{0, 1}-c_{0, 1})\ka +(a_{2, 0}-c_{2, 0})\right]\eps^2 +E(\ka)\eps^2+ 4b_{3,0}^2\eps^2+O(\eps^3).
 \end{align}
 Since $\Delta(\eps, \eps^2\ka)=\eps^4 \Delta''(\eps, \ka)$, we deduce from \eqref{Delta'':0} that in order for $\Delta(\eps, \eps^2\ka)$ to be positive for all $\eps$ small, it is necessary that $(a_{0, 1}-c_{0, 1})\ka+(a_{2, 0}-c_{2, 0})$ vanishes. Thus we seek $\ka$ of the form 
 \[
 \ka =-\frac{a_{2, 0}-c_{2, 0}}{a_{0, 1}-c_{0, 1}}+\eps \tt=:\ka_0+\eps\tt  
 \]
 with $\tt$ to be determined. In light of Lemma \ref{lemm:b30}, $\kappa_0$ is well-defined. Now, we note that $(a_{0, 1}-c_{0, 1})\ka +(a_{2, 0}-c_{2, 0})$ is the common factor in all the main terms in \eqref{Delta'':0}, except for $4b_{3,0}^2\eps^2$. 
In particular, the coefficients $a_{4, 0}$ and $c_{4, 0}$ do not contribute, and we obtain that
 \bq
 \Delta'''(\eps, \tt):=\eps^{-2}\Delta''(\eps, \ka_0+\eps \tt)
 \eq
satisfies
 \bq\label{def:Delta3}
  \Delta'''(\eps, \tt):=-(a_{0, 1}-c_{0, 1})^2\tt^2+ 4b_{3,0}^2+r(\eps, \tt)
 \eq
 with a remainder $r(\eps, \tt)=O(\eps)$. Therefore, for sufficiently small $\eps$, we have $\Delta'''(\eps, \tt)>0$ provided 
\bq\label{cd:tt}
 |\tt|< \kappa_1, \quad \textrm{where} \quad \kappa_1 := \frac{2|b_{3,0}|}{|a_{0, 1}-c_{0, 1}|},
\eq
which is well-defined by Lemma ~\ref{lemm:b30}. Thus, we have shown that  $\Delta(\eps, \delta)>0$ if  $\eps \ne 0$ is  sufficiently small and 
\bq\label{expand:delta}
\delta =\eps^2(\ka_0+\eps \tt)=\eps^2 \ka_0+\eps^3 \tt \quad \textrm{with} \quad |\theta| < \kappa_1.
\eq
Returning to the expression for the eigenvalues \eqref{ld:ABCDelta}, we find that $\text{Re}\lambda_+=\mez \sqrt{\Delta(\eps, \delta)} > 0$. Therefore, small-amplitude Stokes waves are unstable with respect to transverse perturbations with wave numbers $\alpha = \sqrt{\beta_*+\delta}$ for any  $\delta \in (\varepsilon^2\kappa_0-\varepsilon^3\ka_1, \varepsilon^2\kappa_0 +\varepsilon^3\ka_1)$, provided $\varepsilon$ is sufficiently small. Finally, since \begin{equation} \Delta(\eps, \delta)=\eps^6\Delta'''(\eps, \tt)=\eps^6\left[4b_{3, 0}^2-(a_{0, 1}-c_{0, 1})^2\tt^2\right]+O(\eps^7), \label{Delta_exp}
\end{equation} we have $\text{Re}\lambda_+ = \cO(\varepsilon^3)$ as $\varepsilon \rightarrow 0$, completing the proof.  \qed 
 \begin{coro}\label{cor:ellipse}
 For sufficiently small $\eps$, the curve $(-\ka_1, \ka_1)\ni \tt\mapsto \ld_+(\eps, \tt)$ (resp. $(-\ka_1, \ka_1)\ni \tt\mapsto \ld_-(\eps, \tt)$) is within $O(\eps^4)$  distance to the entire right (resp. left) half of the ellipse 
\begin{align}\label{form:ellipse}
\frac{\lambda_r^2}{\left(b_{3,0}\varepsilon^3\right)^2} + \frac{\left(\lambda_i -\sigma - \left(\frac{a_{0,1}c_{2,0}-a_{2,0}c_{0,1}}{(a_{0,1}-c_{0,1})} \right) \varepsilon^2\right)^2}{\left(\frac{b_{3,0}(a_{0,1}+c_{0,1})}{a_{0,1}-c_{0,1}}\varepsilon^3\right)^2} =1
\end{align}
in the complex plane. 
\end{coro}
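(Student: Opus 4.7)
The plan is to substitute the rescaling $\delta=\delta(\varepsilon,\theta)=\kappa_0\varepsilon^{2}+\theta\varepsilon^{3}$ of Theorem~\ref{theo:main} into the exact formula \eqref{ld:ABCDelta} for $\lambda_{\pm}$, then identify the leading $O(\varepsilon^{3})$ behavior of the real and imaginary parts as a parameterization of the claimed ellipse, and finally control the higher-order remainders.

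First I would compute $\mathrm{Im}\,\lambda_{\pm}=\sigma+\tfrac{1}{2}(A+C)$. From the expansions \eqref{expand:ABC}, one has
\begin{equation*}
A+C=(a_{0,1}+c_{0,1})\delta+(a_{2,0}+c_{2,0})\varepsilon^{2}+O(\varepsilon^{4}),
\end{equation*}
and substituting $\delta=\kappa_{0}\varepsilon^{2}+\theta\varepsilon^{3}$ together with the explicit value $\kappa_{0}=-(a_{2,0}-c_{2,0})/(a_{0,1}-c_{0,1})$ yields, after a short algebraic simplification,
\begin{equation*}
\tfrac{1}{2}(A+C)=\frac{a_{0,1}c_{2,0}-a_{2,0}c_{0,1}}{a_{0,1}-c_{0,1}}\,\varepsilon^{2}+\tfrac{1}{2}(a_{0,1}+c_{0,1})\theta\,\varepsilon^{3}+O(\varepsilon^{4}).
\end{equation*}
This identifies the center of the ellipse and the coefficient $\tfrac12(a_{0,1}+c_{0,1})\varepsilon^3$ of $\theta$ in the imaginary-part parameterization.

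Next I would handle the real parts $\pm\tfrac{1}{2}\sqrt{\Delta(\varepsilon,\delta)}$ via the key identity \eqref{Delta_exp}, namely
\begin{equation*}
\Delta(\varepsilon,\delta(\varepsilon,\theta))=\varepsilon^{6}\bigl[4b_{3,0}^{2}-(a_{0,1}-c_{0,1})^{2}\theta^{2}\bigr]+O(\varepsilon^{7}).
\end{equation*}
Since on $|\theta|<\kappa_{1}$ the bracketed term is bounded below by a positive constant away from the endpoints, and vanishes only at $\theta=\pm\kappa_1$, a Taylor expansion of the square root gives
\begin{equation*}
\tfrac{1}{2}\sqrt{\Delta}=\tfrac{\varepsilon^{3}}{2}\sqrt{4b_{3,0}^{2}-(a_{0,1}-c_{0,1})^{2}\theta^{2}}+O(\varepsilon^{4}),
\end{equation*}
uniformly on any compact subinterval. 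Denote the leading-order expressions by $\lambda_r^0(\theta)$ and $\lambda_i^0(\theta)$. A direct substitution into the left-hand side of \eqref{form:ellipse} shows that $(\lambda_r^0(\theta),\lambda_i^0(\theta))$ lies exactly on the ellipse for all $|\theta|<\kappa_1$; as $\theta$ sweeps $(-\kappa_1,\kappa_1)$, the leading parameterization traces the entire right half of the ellipse (going from one tip through the rightmost point $\lambda_r=b_{3,0}\varepsilon^3$ and back to the other tip), and likewise the minus branch traces the left half.

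Finally, the error estimates give $\lambda_{\pm}(\varepsilon,\theta)=\lambda_r^0(\theta)\pm 0+i\lambda_i^0(\theta)+O(\varepsilon^{4})$ in Euclidean distance, uniformly in $\theta\in(-\kappa_1,\kappa_1)$, which immediately yields that each $\lambda_{\pm}(\varepsilon,\theta)$ is within $O(\varepsilon^{4})$ of the appropriate half-ellipse, and conversely each point on the half-ellipse is the image of some $\theta$ under the leading parameterization, hence within $O(\varepsilon^4)$ of a curve point. The main technical point to be careful with is the uniformity of the $O(\varepsilon^{4})$ bound near the tips $\theta=\pm\kappa_{1}$, where $\sqrt{\Delta}$ is small and a naive Taylor expansion of the square root degenerates; this is handled by keeping $\theta$ in a closed subinterval $[-\kappa_1+o(1),\kappa_1-o(1)]$ and observing that the endpoints themselves approach the ellipse tips to within $O(\varepsilon^{4})$ by a separate direct evaluation. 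No further ingredients beyond Lemma~\ref{lemm:b30} (to guarantee $b_{3,0}\neq0$ and $a_{0,1}\neq c_{0,1}$, so that the ellipse is non-degenerate) and the analyticity already established for $A$, $B$, $C$ are required.
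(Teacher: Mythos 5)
Your proposal is correct and follows essentially the same route as the paper's proof: substitute $\delta=\kappa_0\varepsilon^2+\theta\varepsilon^3$ into \eqref{ld:ABCDelta}, expand $A+C$ as in \eqref{expand:A+C}, use \eqref{Delta_exp} for $\Delta$, and eliminate $\theta$ to land on the ellipse \eqref{form:ellipse}. The only difference is your explicit attention to uniformity of the square-root expansion near the tips $\theta=\pm\kappa_1$, a point the paper passes over silently; that remark is a reasonable refinement rather than a departure in method.
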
 
\begin{proof}
We begin with the expansion of $\Delta(\varepsilon,\delta)$ given in \eqref{Delta_exp} as well as the expansion 
\bq\label{expand:A+C}
\begin{aligned}
A+C &=(a_{0, 1}+c_{0, 1})\delta+(a_{2, 0}+c_{2, 0})\eps^2+O(\eps^4)\\
&= \frac{2(a_{0,1}c_{2,0}-a_{2,0}c_{0,1})}{a_{0,1}-c_{0,1}}\varepsilon^2 +(a_{0,1}+c_{0,1}) \theta\eps^3 +O(\eps^4),
\end{aligned}
\eq
where we have used \eqref{expand:delta} to replace $\delta$. Substituting both of these expansions into the formula for the unstable eigenvalues given by \eqref{ld:ABCDelta}, we obtain the expansion 
\bq\label{finalexpansion:ldpm}
\begin{aligned}
\ld_\pm\equiv \ld_\pm(\eps, \tt)
&= i\left[\sigma+\frac{a_{0,1}c_{2,0}-a_{2,0}c_{0,1}}{(a_{0,1}-c_{0,1})}\varepsilon^2 +\frac{a_{0, 1}+c_{0, 1}}{2}\tt\eps^3\right]\\
&\qquad \pm\mez \left[4b_{3, 0}^2-(a_{0, 1}-c_{0, 1})^2\tt^2\right]^\mez|\eps|^3+O(\eps^4).
\end{aligned}
\eq 
For sufficiently small $\eps$ and for $\tt$ satisfying \eqref{cd:tt}, it follows from \eqref{finalexpansion:ldpm} that $\ld_+(\eps, \tt)$ is within $O(\eps^4)$ distance to $(\lambda_r, \lambda_i)\in \Rr^2$, where
\begin{align*}
\lambda_r&=\mez \left[4b_{3, 0}^2-(a_{0, 1}-c_{0, 1})^2\tt^2\right]^\mez|\eps|^3,\\
\lambda_i&=\sigma+\frac{a_{0,1}c_{2,0}-a_{2,0}c_{0,1}}{2(a_{0,1}-c_{0,1})}\varepsilon^2 +\frac{a_{0, 1}+c_{0, 1}}{2}\tt\eps^3.
\end{align*}
By eliminating $\theta$ from the preceding expressions, we  find that $(\lambda_r, \lambda_i)$ lies on the ellipse \eqref{form:ellipse}. We note that, as $\tt$ varies in the interval $(-\ka_1, \ka_1)$, $(\lambda_r, \lambda_i)$ traces the entire right half of the ellipse \eqref{form:ellipse}. Lastly, we note that if the coefficients above are numerically evaluated, we obtain the ellipse \eqref{ellipsenum} given in the introduction.
\end{proof}


\appendix
\section{Stokes expansion}\label{appendix:Stokes}
We recall the water wave system with zero Bernoulli constant:
  \begin{align}\label{ww:eq1}
& \Delta_{x, y} \phi =0\quad \text{in } \Omega, \\ \label{ww:eq2}
&-c\p_x\phi + g\eta + \tfrac12 |\na_{x, y}\phi|^2 =0\quad \text{ on } \{y=\eta(x)\}, \\ \label{ww:eq3}
 &\p_y\phi + (c-\p_x\phi)\p_x\eta=0\quad  \text{ on } \{y=\eta(x)\}, \\ \label{ww:eq4}
 &\na_{x, y}\phi \to 0 \text{ as } y\to -\infty. 
\end{align}
Using superscripts we Taylor-expand the unknowns,  
\begin{align*}
&\eta = \eps\eta^1 + \eps^2\eta^2 +\eps^3\eta^3+\eps^4\eta^4 \dots,\\
&\phi = \eps\phi^1 + \eps^2\phi^2 + \eps^3\phi^3+\eps^4\phi^4+\dots,\\
&c = c^0+\eps c^1 +\eps^2c^2  +\eps^3c^3+\dots,
\end{align*}
  and reserve subscripts for derivatives. Here $\phi^j(x, y): \Rr\times \Rr_-\to \Rr$. It was found in \cite{NguyenStrauss} that 
  \bq\label{Stokes:3}
  \begin{aligned}
  &\eta^1=\cos x,\quad \eta^2=\mez \cos(2x),\quad\eta^3=\frac18\cos x+\frac38 \cos(3x),\\
  &\phi^1=\sqrt{g}e^y\sin(x),\quad\phi^2=\phi^3=0,\\
  &c^0=\sqrt{g},\quad c^1=0,\quad c^2=\frac{\sqrt{g}}{2}.
  \end{aligned}
  \eq
  Our goal is to find the next coefficients $c^3$, $\eta^4$ and $\phi^4$. The coefficient of $\eps^4$ in \eqref{ww:eq2} is
  \[
  \begin{aligned}
  &-c^0(\phi_x)^4-c^2(\phi_x)^2-c^3(\phi_x)^1+g\eta^4+\mez |(\na_{x, y}\phi)^2|+(\na_{x,y}\phi)^1\cdot (\na_{x,y}\phi)^3\\
  &=-c^0\left\{\phi^4_x+\phi^3_{xy}\eta^1+\phi^2_{xy}\eta^2+\phi^1_{xy}\eta^3+\mez\phi^2_{xyy}\eta^1\eta^1+\phi^1_{xyy}\eta^1\eta^2+\frac16\phi^1_{xyyy}\eta^1\eta^1\eta^1\right\}\\
  &\quad-c^2(\phi^2_x+\phi^1_{xy}\eta^1)-c^3\phi^1_x+g\eta^4+\mez[\phi^2_x+\phi^1_{xy}\eta^1]^2+\mez[\phi^2_y+\phi^1_{yy}\eta^1]^2\\
  &\quad +\phi^1_x\phi^1_{xy}\eta^2+\mez \phi^1_x\phi^1_{xyy}\eta^1\eta^1+\phi^1_y\phi^1_{yy}\eta^2+\mez\phi^1_y\phi^1_{yyy}\eta^1\eta^1.
  \end{aligned}
  \]
  Substituting \eqref{Stokes:3}  then equating this to $0$, we obtain
  \bq\label{Stokes:4:eq1}
  \phi^4_x+c^0\left\{-\frac{1}{6}\cos(2x)+\frac13\cos(4x)\right\}+c^3\cos x-\sqrt{g}\eta^4=0
    \eq
    at $y=0$. Next we calculate the coefficient of $\eps^4$ in \eqref{ww:eq3}
  \bq\label{Stokes:4:eq2}
  \begin{aligned}
&(\phi_y\phi)^4+c^0\eta_x^4+c^2\eta_x^2+c^3\eta^1_x-(\phi_x)^1\eta_x^3-(\phi_x)^2\eta_x^2-(\phi_x)^3\eta_x^1\\
&=\phi^4_y+\phi^3_{yy}\eta^1+\phi^2_{yy}\eta^2+\phi^1_{yy}\eta^3+\mez \phi^2_{yyy}\eta^1\eta^1+\phi^1_{yyy}\eta^1\eta^2+\frac16\phi^1_{yyyy}\eta^1\eta^1\eta^1\\ 
&\quad +c^0\eta^4_x+c^2\eta^2_x+c^3\eta^1_x-\phi^1_x\eta^3_x-\eta^2_x(\phi^2_x+\phi^1_{xy}\eta^1)-\eta^1_x\left\{\phi^3_x+\phi^2_{xy}\eta^1+\phi^1_{xy}\eta^2+\mez\phi^1_{xyy}\eta^1\eta^1\right\}\\
&=\phi_y^4+c^0\left\{\frac23\sin(2x)+\frac43\sin(4x)\right\}+c^0\eta_x^4-c^3\sin x,\quad y=0.
\end{aligned}
  \eq
  Differentiating \eqref{Stokes:4:eq1} in $x$ then adding to \eqref{Stokes:4:eq2} we obtain the boundary condition for $\phi$ 
  \bq
  \phi^4_{xx}+\phi^4_y+\sqrt{g}\sin(2x)-2c^3\sin x=0,\quad y=0.
  \eq
  In addition, we have in view of \eqref{ww:eq1} and \eqref{ww:eq4} that $\Delta_{x, y} \phi^4=0$ in $\{y<0\}$ and $\na_{x, y}\phi\to 0$ as $y\to -\infty$. Choosing $c^3=0$ and seeking the solution of the form $\phi(x, y)=a\sin(2x)e^{by}$ with $b<0$, we find
  \bq\label{phi4}
  \phi^4(x, y)=\frac{\sqrt{g}}{2}\sin(2x)e^{2y}.
  \eq
  Inserting \eqref{phi4} in \eqref{Stokes:4:eq1} gives
  \bq
  \eta^4(x)=\sqrt{g}\left\{\frac{5}{6}\cos(2x)+\frac13\cos(4x)\right\}.
  \eq
  Finally, using $\psi(x)=\phi(x, \eta(x))$ we obtain
  \bq
  \begin{aligned}
  \psi^4(x)&=\eps\sqrt{g}\sin x+\eps^2\frac{\sqrt{g}}{2}\sin(2x)+\eps^3\frac{\sqrt{g}}{4}\big(3\sin x\cos(2x)+\sin x \big)+\\
  &\quad+\eps^4\sqrt{g}\left\{\frac{5}{12}\sin(2x)+\frac{1}{3}\sin(4x)\right\}+O(\eps^5).
  \end{aligned}
  \eq
\section{Proof of Proposition  \ref{prop:expandpq}}\label{appendix:pq}
We recall from Remark \ref{rema:XZzeta} that $\zeta$ is analytic in $\eps$ with values in Sobolev spaces. From \eqref{def:pq} and \eqref{def:B*V*} we have
\[
p=\frac{c_*-\zeta_\sharp V_*}{\zeta'},\quad q=-p\p_x(\zeta_\sharp B_*),
\]
where
\[
B^*=\frac{G(\eta^*)\psi^*+\p_x\psi^*\p_x\eta^*}{1+|\p_x\eta^*|^2},\qquad V^*=\p_x\psi^*-B^*\p_x\eta^*.
\] 
By Theorem  1.2 in \cite{Berti-DN}, for any $s>\frac52$, the mapping 
\bq
H^s(\T)\ni\eta\mapsto G(\eta)\in \cL(H^s, H^{s-1})
\eq
is analytic on any bounded set. Combining this with the analyticity in $\eps$ of $\eta^*$ and $\psi^*$ and the boundedness of the Dirichlet-Neumann operator, we deduce that $B^*$ and $V^*$ are analytic in $\eps$ with values in Sobolev spaces. In conjunction with analyticity in $\eps$ of $c_*$ and $\zeta$, this yields analyticity in $\eps$ of $p$ and $q$ and $\frac{1+q}{\zeta'}$. The remainder of this proof is devoted to expansions in powers of $\eps$ for these analytic functions. 
\subsection{Shape-derivative}
We view $G(\eta)\psi$ as a linear operator with respect to $\psi$. Let $G'(\eta)\ol\eta\psi$ denote the shape-derivative of $G(\eta)\psi$ with  respect to $\eta$ evaluated at $\ol\eta$. We recall from Theorem \ref{theo:shapederi} that the shape-derivative is given by 
\bq\label{shapederi}
 G'(\eta)\ol{\eta}\psi=-G(\eta)(\ol\eta B(\eta)\psi)-\p_x(\ol\eta V(\eta)\psi),
\eq
where 
\bq\label{BV}
B(\eta)\psi=\frac{G(\eta)\psi+\p_x\psi\p_x\eta}{1+|\p_x\eta|^2},\qquad V(\eta)\psi=\p_x\psi-\p_x\eta B(\eta)\psi.
\eq
Again, we view $B(\eta)\psi$ and $V(\eta)\psi$ as linear operators with respect to $\psi$. Next, to calculate the second derivative $G''(\eta)\psi$ we reapply \eqref{shapederi} and obtain
\bq
G''(\eta)[\ol{\eta}, \wt \eta]\psi=-G'(\eta)\wt \eta(\ol\eta B(\eta, \psi))-G(\eta)(\ol \eta B'(\eta)\wt \eta\psi)+\p_x\{\ol\eta \p_x\eta B'(\eta)\wt\eta\psi+\ol\eta \p_x\wt \eta B(\eta)\psi\},
\eq
where
\bq\label{dB}
B'(\eta)\wt\eta\psi=\frac{1}{1+|\p_x\eta|^2}\{G'(\eta)\wt \eta\psi+\p_x\wt\eta\p_x\psi \}-\frac{2\p_x\eta\p_x\wt\eta}{(1+|\p_x\eta|^2)^2}\{G(\eta)\psi+\p_x\eta\p_x\psi\}.
\eq
Now  substituting $\eta=0$ yields
\bq\label{G'}
\begin{aligned}
& G(0)=|D|,\quad B(0)=|D|,\quad V(0)=\p_x,\\
& G'(0)\ol\eta\psi=-|D|(\ol\eta |D|\psi)-\p_x(\ol\eta \p_x\psi),
\end{aligned}
\eq
whence
\bq\label{B'(0)}
B'(0)\wt\eta\psi=G'(0)\wt \eta\psi+\p_x\wt\eta\p_x\psi=-|D|(\wt\eta |D|\psi)-\p_x(\wt\eta \p_x\psi)+\p_x\wt\eta\p_x\psi
\eq
and
\bq\label{G''}
\begin{aligned}
G''(0)[\ol{\eta}, \wt \eta]\psi&=-G'(0)\wt \eta(\ol\eta|D|\psi)-|D|(\ol \eta B'(0)\wt \eta\psi)+\p_x\{\ol\eta |D|\psi\p_x\wt \eta\}\\
&=|D|\{\wt\eta |D|(\ol\eta|D|\psi)\}+\p_x\{\wt\eta \p_x(\ol\eta|D|\psi)\}\\
&\quad-|D|\left\{\ol\eta \left[-|D|(\wt\eta |D|\psi)-\p_x(\wt\eta \p_x\psi)+\p_x\wt\eta\p_x\psi\right]\right\}+\p_x\{\ol\eta |D|\psi\p_x\wt \eta\}\\
&=|D|\{\wt\eta |D|(\ol\eta|D|\psi)\}+\p_x\{\wt\eta \p_x(\ol\eta|D|\psi)\}\\
&\quad+|D|\left\{\ol \eta |D|(\wt\eta |D|\psi)\right\}+|D|\left\{\ol\eta\p_x(\wt\eta \p_x\psi)\right\}-|D|\left\{\ol\eta\p_x\wt\eta\p_x\psi\right\}+\p_x\{\ol\eta |D|\psi\p_x\wt \eta\}.
\end{aligned}
\eq
Simplifying, we arrive at 
\bq
G''(0)[\ol{\eta}, \wt \eta]\psi=|D|\{\wt\eta |D|(\ol\eta|D|\psi)\}+|D|\left\{\ol \eta |D|(\wt\eta |D|\psi)\right\}+|D|\left\{\ol\eta\wt\eta\p^2_x\psi\right\}+\p^2_x\{\ol\eta\wt \eta |D|\psi\}.
\eq
\subsection{Expansion of $p$} We use \eqref{B'(0)} to expand $B^*$ up to $\eps^2$:
\bq
\begin{aligned}
B^*(x):&=B(\eta^*)\psi^*=B(0)\psi^*+B'(0)\eta^*\psi^*+O(\eps^2)\\
&=|D|\psi^*-|D|(\eta^* |D|\psi^*)-\p_x(\eta^* \p_x\psi^*)+\p_x\eta^*\p_x\psi^*+O(\eps^2)\\
&=|D|(\eps \sin x+\mez\eps^2\sin(2x))-|D|\big((\eps \cos x)(\eps \sin x)\big)-\p_x\big((\eps \cos x)(\eps \cos x)\big)\\
&\qquad-(\eps \sin x)(\eps \cos x)+O(\eps^3)\\
&=(\eps \sin x+\eps^2\sin(2x))-\eps^2\big\{\sin(2x) -\sin(2x)+\mez\sin(2x)\big\}+O(\eps^3)\\
&=\eps \sin x+\mez \eps^2 \sin(2x)+O(\eps^3).
\end{aligned}
\eq 
This implies the expansion for $V^*$ up to $\eps^3$:
\bq
\begin{aligned}
V^*(x)&=\p_x\psi^*-\p_x\eta^*B^*\\
&=\eps \cos x+\eps^2\cos (2 x)+\frac{1}{4} \eps^3\big\{3\cos x\cos(2x)-6\sin x\sin(2x)+\cos x\big\}\\
&\quad-\{-\eps \sin x- \eps^2\sin(2x)\}\{\eps \sin x+\mez \eps^2 \sin(2x)\}+O_\eps(\eps^4)\\
&=\eps\cos x+\frac{\eps^2}{2}(1+\cos(2x))+\frac{\eps^3}{8}(5\cos x+3\cos(3x))+O(\eps^4).
\end{aligned}
\eq
Regarding $\zeta$,  we write 
\[
\zeta=x+\eps\zeta^1+\eps^2\zeta^2+\eps^3\zeta^3+O(\eps^4),\quad \eta^*=\eps\eta^1+\eps^2\eta^2+\eps^3\eta^3+O(\eps^4),
\]
so that by Taylor expansion,
\bq\label{compose:zeta}
\begin{aligned}
\eta^*(\zeta(x))&=\eps \eta^1(x)+\eps^2(\zeta^1\p_x\eta^1(x)+\eta^2(x))\\
&\quad+\eps^3\left\{\zeta^2(x)\p_x\eta^1(x)+\mez(\zeta^1(x))^2\p^2_x\eta^1(x)+\zeta^1(x)\p_x\eta^2(x)+\eta^3(x)\right\}+O(\eps^4).
\end{aligned}
\eq
It follows from \eqref{z1} that 
\bq
\zeta(x)=X(x, 0)=x-\frac{i}{2\pi}\sum_{k\ne 0}e^{ikx}\mathrm{sign}(k)\widehat{\eta^*\circ \zeta}(k).
\eq
We equate the coefficients of $\eps^j$ on both sides to determine $\zeta^j$. It was  already found in \cite{NguyenStrauss} that $\zeta^1(x)=\sin x$ and $\zeta^2(x)=\sin(2x)$. At the order $\eps^3$ we have
\begin{align*}
\zeta^3(x)=-\frac{i}{2\pi}\sum_{k\ne 0}e^{ikx}\mathrm{sign}(k)\widehat{g}(k),
\end{align*}
where
\begin{align*}
g(x)&=\zeta^2(x)\p_x\eta^1(x)+\mez(\zeta^1(x))^2\p^2_x\eta^1(x)+\zeta^1(x)\p_x\eta^2(x)+\eta^3(x)=-\cos x+\tdm\cos(3x).
\end{align*}
Direct calculations give $\zeta^3(x)=-\sin x+\tdm \sin(3x)$ and thus
\bq\label{expand:zeta}
\zeta(x)=x+\eps\sin x+\eps^2\sin(2x)+\eps^3\big(-\sin x+\tdm \sin(3x)\big)+O(\eps^4).
\eq
Inserting \eqref{expand:zeta} back in \eqref{compose:zeta} gives
\bq\label{etazeta}
\eta^*(\zeta(x))=\eps \cos x+\eps^2\big(\cos(2x)-\mez\big)+\eps^3\big(\tdm \cos(3x)-\cos x \big)+O(\eps^4).
\eq
Using \eqref{compose:zeta} with $V^*$ in place of $\eta^*$ we obtain
\bq\label{zetaV}
\begin{aligned}
V^*(\zeta(x))&=\eps V^1(x)+\eps^2(\zeta^1\p_xV^1(x)+V^2(x))\\
&\quad+\eps^3\left\{\zeta^2(x)\p_xV^1(x)+\mez(\zeta^1(x))^2\p^2_xV^1(x)+\zeta^1(x)\p_xV^2(x)+V^3(x)\right\}+O(\eps^4)\\
&=\eps \cos x+\eps^2\cos(2x)+\eps^3\left\{\sin(2x)(-\sin x)+ \mez\sin^2x(-\cos x)+\sin x(-\sin(2x))\right.\\
&\quad\left.+\frac{5}{8}\cos x+\frac{3}{8}\cos(3x)\right\}+O(\eps^4)\\
&=\eps \cos x+\eps^2\cos(2x)+\eps^3\big(-\mez \cos x+\tdm \cos(3x)\big)+O(\eps^4).
\end{aligned}
\eq
 Combining \eqref{expand:zeta} and \eqref{zetaV} yields the expansion for $p$
\bq\label{expand:p:proof}
p(x)=\frac{c^*-V^*(\zeta(x))}{\zeta'(x)}=1-2\eps \cos x+\eps^2\big(\tdm-2\cos(2x)\big)+\eps^3\big(3\cos x-3\cos(3x)\big)+O(\eps^4).
\eq
\subsection{Expansion of $q$ and $\frac{1+q}{\zeta'}$}
Using \eqref{G'} and \eqref{G''} we expand 
\bq
\begin{aligned}
G(\eta^*)\psi^*&=G(0)\psi^*+G'(0)\eta^*\psi^*+\mez G''(0)[\eta^*, \eta^*]\psi^* + O(\eps^4)\\
&=|D|\psi^*-|D|(\eta^* |D|\psi^*)-\p_x(\eta^* \p_x\psi^*)\\
&\quad +\mez |D|\left\{2\eta^* |D|(\eta^*|D|\psi^*)+\eta^*\eta^*\p^2_x\psi^*\right\}
+\mez\p^2_x\{\eta^*\eta^* |D|\psi^*\} + O(\eps^4)\\
&=\eps \sin x+\eps^2\sin(2x)+\eps^3\big(\frac58\sin x+\frac98\sin(3x)\big)+O(\eps^4).
\end{aligned}
\eq
Consequently,
\bq
B^*(x)=\frac{G(\eta^*)\psi^*+\p_x\eta^*\p_x\psi^*}{1+|\p_x\eta^*|^2}=\eps \sin x+\mez \eps^2\sin(2x)+\eps^3\big(\frac38\sin(3x)-\frac18\sin(x)\big)+O(\eps^4).
\eq
Then using \eqref{compose:zeta} with $\eta^*$ replaced by $B^*$ and recalling \eqref{expand:zeta}, we obtain
\bq\label{expand:zetaB:3}
B^*(\zeta(x))=\eps \sin x+\eps^2\sin(2x)+\eps^3\big(\tdm \sin(3x)-\mez \sin x\big)+O(\eps^4).
\eq
Combining \eqref{expand:p:proof} and \eqref{expand:zetaB:3} yields
\bq\label{expand:q:proof}
\begin{aligned}
q(x)=-p(x)\p_x(\zeta_\sharp B^*(x))=-\eps \cos x+\eps^2(1-\cos(2x))+\eps^3\big(2\cos x-\tdm \cos(3x)\big)+O(\eps^4).
\end{aligned}
\eq
Finally, from \eqref{expand:zeta} and \eqref{expand:q:proof}, we deduce
\bq
\frac{1+q(x)}{\zeta'(x)}=1-2\eps \cos x+2\eps^2(1-\cos(2x))+\eps^3\big(4\cos x-3\cos(3x)\big)+O(\eps^4).
\eq 

\vspace{.1in}
{\noindent{\bf{Acknowledgment.}} 
The work of HQN was partially supported by NSF grant DMS-2205710. We thank Bernard Deconinck and Massimiliano Berti for enlightening discussions over several years.
}


\end{document}